%-----------------------------------------------------------------------
% Beginning of article-template.tex
%-----------------------------------------------------------------------
%
%    This is a template file for proceedings articles prepared with AMS
%    author packages, for use with AMS-LaTeX.
%
%    Templates for various common text, math and figure elements are
%    given following the \end{document} line.
%
%%%%%%%%%%%%%%%%%%%%%%%%%%%%%%%%%%%%%%%%%%%%%%%%%%%%%%%%%%%%%%%%%%%%%%%%

%    Remove any commented or uncommented macros you do not use.

%    Replace amsproc by the name of the author package.
\documentclass{article}

%    If you need symbols beyond the basic set, uncomment this command.
%\usepackage{amssymb}

%    If your article includes graphics, uncomment this command.
%\usepackage{graphicx}

%    If the article includes commutative diagrams, ...
%\usepackage[cmtip,all]{xy}

%    Include other referenced packages here.
%\usepackage{}

%    Update the information and uncomment if AMS is not the copyright
%    holder.
%\copyrightinfo{2009}{American Mathematical Society}

\usepackage{amssymb}
\usepackage{amsfonts}
\usepackage{amsmath}
\usepackage{enumerate}
\usepackage{graphicx}
\usepackage{tikz}
\usepackage{hyperref}
\usepackage[all]{xy}
\usepackage{xcolor}

\hypersetup{ colorlinks   = true,   urlcolor     = blue,   linkcolor    = blue,   citecolor   = red }

\newtheorem{theorem}{Theorem}[section]
\newtheorem{lemma}[theorem]{Lemma}
\newtheorem{corollary}[theorem]{Corollary}
\newtheorem{proposition}[theorem]{Proposition}

\newtheorem{notation}[theorem]{Notation}
\newtheorem{assumption}[theorem]{Assumption}
\newtheorem{algorithm}[theorem]{Algorithm}
\newtheorem{definition}[theorem]{Definition}
\newtheorem{example}[theorem]{Example}

\newtheorem{remark}[theorem]{Remark}
\newenvironment{proof}{\paragraph{Proof:}}{\hfill$\square$}
\numberwithin{equation}{section}

\begin{document}
\title{Modular companions in planar one-dimensional equisymmetric strata}
\author{S. Allen Broughton, Antonio F. Costa\thanks{Partially supported by PID2023-152822 NB-I00 of Ministerio de Ciencia, Innovacion y Universidades (Spain)}, Milagros Izquierdo}\maketitle

\begin{abstract}
Consider, in the moduli space of Riemann surfaces of a fixed genus, the subset of surfaces with non-trivial automorphisms. 
Of special interest are the numerous subsets of surfaces admitting an action of a given finite group, $G$, acting with a specific signature. 
In a previous study \cite{BrCoIz2}, we declared two Riemann surfaces to be  \emph{modular companions} if they have topologically equivalent $G$ actions, 
and that their $G$ quotients are conformally equivalent orbifolds.  
In this article we present a geometrically-inspired measure to
decide whether two modular companions are conformally equivalent (or how different), respecting the $G$ action.  
Along the way, we construct a moduli space for surfaces with the specified $G$ action and associated equivariant tilings on these surfaces. 
We specifically apply the ideas to planar, finite group actions whose quotient orbifold is a sphere with four cone points.
\end{abstract}

%\tableofcontents \vfill \eject

\section{Overview, motivation, and background}
\subsection{Overview and motivation}\label{subsec-overmotiv}

In this paper we have two goals: 

\medskip
\paragraph{Goal 1.}  \emph{Moduli of group actions}.  Put the authors' previous study \cite{BrCoIz1}, \cite{BrCoIz2} of group actions 
and equisymmetric strata  into a unified framework, using the ideas of \emph{horizontal and vertical invariants} (Section \ref{subsec-horivert}). 

\paragraph{Goal 2.} \emph{Modular companions} (Section \ref{subsec-modcomp}) are an essential part of the uniform framework of Goal 1. 
Our second goal is to distinguish modular companions by geometric means, rather than using the algebraic nature of their definition.  
As an intermediate step, we construct equivariant tilings on surfaces and their Cayley graphs.

\medskip
The main contribution of the paper is Goal 2, though it needs Goal 1 to put the narrative into a proper context.  Our examples focus primarily 
on signatures defining planar, one dimensional strata (Section \ref{subsubsec-eqsymm} and Remark \ref{rk-stratafacts}), but the methods apply to higher dimensional strata. 

The \emph{group action moduli spaces} that we construct have been considered by other authors \cite{GDH,HPCRH}. 
Our contribution is to realize these spaces as finite orbifold covers of the moduli space of quotients, 
given in equations \eqref{eq-pBM}, \eqref{eq-modcomp-comp}, and \eqref{eq-modcomp-map}. 
The key idea is the horizontal and vertical invariants of an action, see Section \ref{subsec-horivert} for more detail. 

\subsubsection{Motivation}\label{subsubsec-motiv}
Let $S$ be  a Riemann surface of genus $\sigma \ge 2$. The collection of all conformal equivalence classes of such surfaces 
is called the \emph{moduli space of surfaces of genus $\sigma$} which we denote by $\mathcal{M}_\sigma$. 
The moduli space is both a quasi-projective algebraic variety and a complex orbifold: 
\begin{equation*}
  \mathcal{M}_\sigma=\mathcal{T}_\sigma/M_\sigma.
\end{equation*}
We shall explain the right hand side (orbifold), along with the terminology to follow, a bit later in the paper.

The singularity set $\mathcal{B}_\sigma$ of $\mathcal{M}_\sigma$, considered as a complex orbifold, is called the branch locus, 
an object of much study, see \cite{BaCoIz, Br1, BrCoIz1, BrCoIz2, Ha, HPCRH} for instance.
The branch locus consists of all surfaces with a ``larger than expected'' automorphism group, or with ``extra automorphisms''. 
 
\vskip 1pt
\noindent
Note: When $\sigma = 2$ the expected automorphism group has 2 elements, consisting of the identity and the hyperelliptic involution.
In higher genus the only expected automorphism is the identity.

The branch locus has a stratification into \emph{equisymmetric strata}, 
which consist of surfaces with ``the same automorphism group'', see Sections \ref{subsubsec-eqsymm}, \ref{subsec-eqstrata}. 
The strata are irreducible, smooth, quasi-projective subvarieties of $\mathcal{M}_\sigma$. 
The closures of these strata can be modeled by the components (also called strata) of a 
moduli space of surfaces allowing a conformal action of a finite group $G$ 
with a specific \emph{action signature} $\mathfrak{s}$.  These \emph{group action moduli spaces}, 
introduced in Section \ref{sec-modaction}, are a principal focus of this paper.  
Using these spaces, the study of $\mathcal{B}_\sigma$  amounts to a detailed 
study of the different types of automorphism groups of surfaces of a fixed genus.

Throughout the paper, all our moduli spaces and maps will be viewed as complex analytic spaces and maps,
specifically complex orbifolds and covering maps. This is also the viewpoint of the authors \cite{GDH,HPCRH}.
This is enough to do all relevant topology and, additionally, highlights the so-called\emph{ branch loci} as a natural orbifold construction.
We will not try to force things into the algebro-geometric category, which can obscure the branch locus in low genus. 
However, without defining equations, fields of definition and other such arithmetic questions are out of bounds.

\subsubsection{Examples and computation}\label{subsubsec-examples}
Throughout the paper we illustrate the ideas by a series of examples, 
\ref{ex-S3-2233-1}, \ref{ex-S3-2233-2}, \ref{ex-S3-2233-3}, \ref{ex-S3-2233-4}, discussing the small case of the non-abelian group $G=\Sigma_3$, 
acting with signature $\mathfrak{s}=(0;2,2,3,3)$ upon a surface $S$ of genus $2$. This small case is amenable to hand calculation. 
If the group or  genus is of moderate size then computer methods must be used, especially for computing partial isometries. 
For instance, suppose $G=\mathrm{Alt}_5$, acting with signature $\mathfrak{s}=(0;5,5,5,5)$ on a surface $S$. 
Then the genus of  $S$ is $37$ and there are $47$ conformally inequivalent surfaces $S$ with a  
$(G, \mathfrak{s})$ action and such that for each $S$, $S/G$ is the same quotient surface,
namely a sphere with $4$ specific cone points of order $5$. For a discussion of more complex examples see 
Section \ref{subsec-spectacular}.

Detailed Magma \cite{Mag} scripts and sample calculations are given on the site \cite{Br3}, Section 4. 
Code for all the calculations, including modular action on generating vector classes, 
computing tiling invariants, and partial isometry calculations, are posted. 
Complete calculation log files for some interesting groups and signatures are also posted.       

\subsubsection{Outline}
In the remainder of this paper, we shall focus on this sequential list of topics:
\begin{enumerate}
  \item Section \ref{subsec-background}: recall necessary background for the rest of the paper: orbifolds, equisymmetry, conformal group actions, uniformization,  and monodromy. 
  \item Section \ref{sec-modaction}: uniform construction of group action moduli spaces and their relation to the equisymmetric stratification. 
  \item Section \ref{sec-tiling}: equivariant tilings of surfaces.  
  \item Section \ref{sec-S4}: detailed analysis of equivariant tilings for one dimensional planar actions --  four cone points on a sphere.  
  \item Section \ref{sec-Cayley}: dual and group Cayley graphs for an equivariant tiling and  
                 partial isometries of modular companions. 
  \item Section \ref{sec-BasicLemmas}: basic lemmas used in other sections. 
                 
\end{enumerate}

\subsection{Acknowledgement}
The authors wish to thank the referee for numerous suggestions about clarifying the exposition, especially encouraging us to add a final subsection on interesting and ``spectacular" examples.

\subsection{Background}\label{subsec-background}
Now, let us describe the ideas of orbifolds, equisymmetry, and group actions, in the remainder of this introductory section.
\subsubsection{Orbifolds} 
We utilize orbifolds that may be realized as a quotient of a manifold by a discrete group (good orbifolds).  For more background on orbifolds, see \cite{MM}.

Suppose we have a real or complex manifold $X$ and  a discrete group $\mathcal{G}$ acting effectively and discontinuously, 
as a group of transformations of $X$, via $\left(g,x\right)  \rightarrow g\cdot x=gx$. 
We call $(X,\mathcal{G})$ an orbifold pair, and denote the orbits by $\mathcal{G}x=x^{\mathcal{G}}$, for $x \in X$; the space of orbits by $X/\mathcal{G}$; 
and the quotient map $x\rightarrow\mathcal{G}x=x^{\mathcal{G}}$ by $\pi_\mathcal{G} : X\rightarrow X/\mathcal{G}$.
For the topological properties of the pair $(X,\mathcal{G})$ and the orbifold $X/\mathcal{G}$, we are specifically assuming that: 
\begin{enumerate}
  \item The manifold $X$ has countably many path components (by definition), $\mathcal{G}$  is countable and the $\mathcal{G}$-stabilizer of every $x \in  X$  is finite.
  \item For any two points $x \in X$, $y \in X -\mathcal{G}x$, there exists a pair of neighborhoods 
         $U_x$, $U_y$ of $x$, respectively $y$, such that for $ \forall g \in  G$, $gU_x \cap U_y = \varnothing$.
  \item The topology on $X/\mathcal{G}$ is induced by the quotient map  $\pi_\mathcal{G}$. 
  \item For each $x \in X$ there is a neighbourhood  $U_x$ of $X$ such that $U_x$ does not meet $gU_x$ unless $gx = x$, and $U_x$ is invariant under the stabilizer $\mathcal{G}_x=\{g\in\mathcal{G}: gx=x\}$.
  \item For $U_x$ so chosen, $U_x/\mathcal{G}_x \rightarrow \pi_\mathcal{G}(U_x)$ is a homeomorphism onto a neighbourhood of $y=\pi_{\mathcal{G}}(x)$ in $X/\mathcal{G}$.
\end{enumerate}
Items 4 and 5 are consequences of items 1,2, and 3. 

\medskip
Suppose we are given two orbifold pairs  $(X,\mathcal{G})$ and  $(Y,\mathcal{H})$ and a continuous map $\phi:X\rightarrow Y$ that satisfies
\[\phi(gx)= w(g)\phi(x),\]
for some map $w:\mathcal{G}\rightarrow \mathcal{H}$. Then $\phi$ induces a  map $\overline{\phi}:  X/\mathcal{G} \rightarrow Y/\mathcal{H}$, as we summarize in the 
diagram below.
\begin{equation}\label{dia-orbifold-lift}
\xymatrix{
   X \ar[r]^\phi \ar[d]^{\pi_{\mathcal{G}}} & Y \ar[d]^{\pi_{\mathcal{H}}} \\
  X/\mathcal{G} \ar[r]^{\overline{\phi}}       & Y/\mathcal{H}
   }
\end{equation}
We call the map $\phi$ a \emph{lift} of  $\overline{\phi}$. 

Now suppose that $\overline{\phi}: X/\mathcal{G}\rightarrow Y/\mathcal{H}$ is a map of orbifolds.  We say that 
$\overline{\phi}$ is an \emph{orbifold cover} if for each $\overline{y} \in Y/\mathcal{H}$, there is an open neigbourhood
$\overline{V}$ of $\overline{y}$ such that $(\overline{\phi})^{-1}(\overline{V})$ is a disjoint union of open sets $\{\overline{U}_i\}$,  
each of which contains a unique point $\overline{x}_i \in \overline{U}_i \cap (\overline{\phi})^{-1}(\overline{y})$. 
Moreover, we require that the restriction of $\overline{\phi}: (\overline{U}_i,\overline{x}_i) \rightarrow (\overline{V},\overline{y})$ has a lift 
$\phi: (U,x_i) \rightarrow (V,y)$ as in diagram \eqref{dia-orbifold-lift},
where the vertical actions are by stabilizers of points $x_i$ and $y$. The neighbourhood $\overline{V}$ has to be chosen
so that $V$ and all the lifted neighbourhoods $U_i$ are invariant under the stabilizers of $y$ and $x_i$. 

We are not going to  define the \emph{orbifold fundamental group} $\pi_1^{orb}(X/\mathcal{G}, \overline{x}_0)$  here, 
but do note that if $X$ is simply connected, then $\pi_1^{orb}(X/\mathcal{G}, \overline{x}_0)$
is isomorphic to $\mathcal{G}$ as a group of deck transformations of the map $X \rightarrow X/\mathcal{G}$. Later, in
Section \ref{subsubsec-monolift}, we show how we may use standard covers to construct lifts and monodromies.

The next proposition gives a universal way to construct finite orbifold covers of a good orbifold.
It generalizes the orbifold cover defined by a subgroup of finite index $\mathcal{K} \subseteq \pi_1^{orb}(X)$, through an action on the  
coset space $\pi_1^{orb}(X)/\mathcal{K}$. The proof is straightforward and we leave it to the reader. 

\begin{proposition}\label{prop-orbicover}
Suppose that $(X, \mathcal{G})$ is an orbifold pair, $X$ is connected, and $\mathcal{F}$ is a finite set upon which $\mathcal{G}$ acts via 
$\left(g,v\right)  \rightarrow g\cdot v=gv$.
 Let $\mathcal{G}$ act upon $X\times\mathcal{F}$ by the diagonal action  $g\cdot
(x,v)=(g\cdot x,g\cdot v)$, and consider the projection
\begin{equation*}
p:\left(  X\times\mathcal{F}\right)  /\mathcal{G}\rightarrow X/\mathcal{G}%
,\text{ }\left(  x,v\right)  ^{\mathcal{G}}\rightarrow x^{\mathcal{G}}.
\end{equation*}
Then, we have the following. 
\begin{enumerate}
  \item The map  $p$ is well-defined and is an orbifold cover.
  \item Each path component of $\left(  X\times\mathcal{F}\right)  /\mathcal{G}$,
  is determined by an orbit $\mathcal{F}_i = \mathcal{G}\cdot f_i$ and we have a decomposition into disjoint orbifolds
  \[  
   \left( X\times\mathcal{F} \right) /\mathcal{G}=\bigcup_i \left( X\times\mathcal{F}_i \right)  /\mathcal{G}, 
  \]
  with connected orbifold covering projections
  \[
     p_i =  p:\left(  X\times\mathcal{F}_i\right)  /\mathcal{G}\rightarrow X/\mathcal{G}.
  \]
  \item For each $(x,v) \in (x_0,v_0)^\mathcal{G}$ the map $p$ has the localized form 
  \[ 
     U\times \{v\}/(\mathcal{G}_x\cap\mathcal{G}_v) \rightarrow  U/\mathcal{G}_x,
  \]  
  where $U$ is a suitable $\mathcal{G}_x$ invariant neighbourhood of $x$.
  \item For each $x\in X$, the fibres $p^{-1}(p(x))$ are in 1-1 correspondence to the orbits of the stabilizer $\mathcal{G}_{x}$ acting upon $\mathcal{F}$.
\end{enumerate}
\end{proposition}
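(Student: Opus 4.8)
The plan is to verify each of the four claims by reducing everything to the local structure of the orbifold pair $(X,\mathcal{G})$ as encoded in the axioms (1)--(5) preceding the statement, together with elementary facts about finite $\mathcal{G}$-sets. For claim (1), I would first check that $p$ is well-defined: if $(x,v)^{\mathcal{G}} = (x',v')^{\mathcal{G}}$ then $x' = gx$ for some $g \in \mathcal{G}$, so $x^{\mathcal{G}} = (x')^{\mathcal{G}}$, and $p$ does not depend on the chosen representative. To see that $p$ is an orbifold cover, fix $\overline{x}_0 = x_0^{\mathcal{G}} \in X/\mathcal{G}$ and choose, using axiom (4), a $\mathcal{G}_{x_0}$-invariant neighbourhood $U$ of $x_0$ meeting $gU$ only when $g \in \mathcal{G}_{x_0}$. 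Then $\pi_{\mathcal{G}}(U) = U/\mathcal{G}_{x_0}$ is a neighbourhood $\overline{V}$ of $\overline{x}_0$ by axiom (5), and $p^{-1}(\overline{V})$ decomposes as a disjoint union indexed by the $\mathcal{G}_{x_0}$-orbits on $\mathcal{F}$: for a representative $v$ of such an orbit, the corresponding sheet is the image of $U \times \{v\}$ in $(X \times \mathcal{F})/\mathcal{G}$, which is $(U \times \{v\})/(\mathcal{G}_{x_0} \cap \mathcal{G}_v)$. This simultaneously establishes the localized form asserted in claim (3), since the diagonal stabilizer of $(x_0,v)$ in $\mathcal{G}$ is exactly $\mathcal{G}_{x_0} \cap \mathcal{G}_v$, and the lift required in the definition of orbifold cover is the projection $U \times \{v\} \to U$.

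For claim (2), the path components of $(X \times \mathcal{F})/\mathcal{G}$ correspond to the $\mathcal{G}$-orbits of path components of $X \times \mathcal{F}$; since $X$ is connected, the path components of $X \times \mathcal{F}$ are the slices $X \times \{f\}$, and two slices $X \times \{f\}$, $X \times \{f'\}$ lie in the same $\mathcal{G}$-orbit precisely when $f$ and $f'$ are in the same $\mathcal{G}$-orbit $\mathcal{F}_i$. Hence the path components are the sets $(X \times \mathcal{F}_i)/\mathcal{G}$, and the restriction of $p$ to each is a covering projection onto $X/\mathcal{G}$ (connected because $X$ is); that each $p_i$ is still an orbifold cover follows from the local description already obtained, restricted to a single orbit. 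Claim (4) is then immediate from claim (3): the fibre $p^{-1}(x^{\mathcal{G}})$ is, as a set, $\{(x,v)^{\mathcal{G}} : v \in \mathcal{F}\}$, and $(x,v)^{\mathcal{G}} = (x,v')^{\mathcal{G}}$ if and only if there is $g \in \mathcal{G}$ with $gx = x$ and $gv = v'$, i.e. if and only if $v, v'$ lie in the same $\mathcal{G}_x$-orbit on $\mathcal{F}$; so the fibre is in natural bijection with $\mathcal{F}/\mathcal{G}_x$.

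The main obstacle, and the only place where care is genuinely needed, is the verification that the neighbourhoods $\overline{U}_i$ produced above are genuinely disjoint and that the map restricted to each is an orbifold cover in the precise sense demanded by the definition — in particular that $\overline{V}$ can be chosen so that $V = U$ and all the $U_i$ are invariant under the relevant stabilizers. This is where axioms (2) and (4) do the work: axiom (2) guarantees, after shrinking $U$, that translates $gU$ for $g \notin \mathcal{G}_{x_0}$ can be separated from $U$, which is what forces the images of $U \times \{v\}$ for $v$ in distinct $\mathcal{G}_{x_0}$-orbits to be disjoint in the quotient; axiom (4) supplies the stabilizer-invariance. Everything else is bookkeeping with orbits of finite groups, which is why the authors rightly leave it to the reader. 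I would present the argument in the order (1)+(3), then (2), then (4), since (3) is really extracted in the course of proving (1).
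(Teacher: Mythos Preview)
Your proposal is correct and is exactly the kind of straightforward verification the paper has in mind: the authors explicitly say ``The proof is straightforward and we leave it to the reader,'' so there is no proof in the paper to compare against. Your ordering (1)+(3), then (2), then (4), and your use of axioms (4)--(5) to produce the local model $U\times\{v\}/(\mathcal{G}_x\cap\mathcal{G}_v)\to U/\mathcal{G}_x$, is the natural route and matches what the statement itself is set up to suggest.
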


\subsubsection{Equisymmetry}\label{subsubsec-eqsymm}   Fix a genus $\sigma \geq 2$, and let $S_0$ be a reference surface and $S$ an arbitrary surface, both of genus $\sigma$. 
Next, let $h:S_0\rightarrow S$ be any orientation preserving homeomorphism. The automorphism group  $\textrm{Aut}(S)$  
is finite and we may pull it back to a finite subgroup of orientation preserving homeomorphisms of $S_0$, 
via the map $\phi \rightarrow h^{-1}\circ \phi \circ h$. The pullback of the automorphism group then defines
a finite subgroup $F \subset M_\sigma \simeq MCG(S_0)$, the mapping class group of $S_0$. It is well-known that, for $\sigma \geq 2$,
the map $\textrm{Aut}(S) \rightarrow F$ is an isomorphism. If a different homeomorphism $h^\prime$ is chosen,
then the corresponding subgroup $F^\prime$ is conjugate to $F$. Thus $S$ determines a conjugacy class of finite subgroups of $M_\sigma$, 
 called the\emph{ symmetry type} of $S$ and it is denoted by $\Sigma(S)$.

Two Riemann surfaces $S_1$ and $S_2$ are called \emph{equisymmetric} if $\Sigma(S_1)=\Sigma(S_2)$. 
The equivalence classes of this relation are the so-called  \emph{equisymmetric strata}, referred to in Section \ref{subsubsec-motiv}. 
As mentioned, the strata are irreducible, smooth, quasi-projective subvarieties of $\mathcal{M}_\sigma$. 
The branch locus is a disjoint union  of equisymmetric strata, and the open subvariety of generic surfaces 
is $\mathcal{M}_\sigma-\mathcal{B}_\sigma$.   
The closure of an equisymmetric stratum may be modeled by one of the components of a \emph{group action moduli space}, the focus of our paper.
On the other hand, an equisymmetric stratum closure is obtained adding the equisymmetric strata of ``higher symmetry''. 
For more background on the equisymmetric stratification see \cite{Br1}.    

\subsubsection{Group actions and signatures}
A conformal action of the finite group $G$ upon the closed, connected Riemann surface $S$ is a monomorphism 
$\epsilon:G \rightarrow \mathrm{Aut}(S)$, where $\mathrm{Aut}(S)$ is the group of conformal automorphisms of $S$. 
If the action is twisted by $\omega \in \mathrm{G}$, say, $\epsilon^{\prime}=\epsilon\circ\omega^{-1}$,  
the same \emph{action subgroup} $\epsilon^\prime(G)=\epsilon(G)$ of $\mathrm{Aut}(S)$ is determined.\
The pair $(S,\epsilon)$ is a called an \emph{action pair} and $(S,\epsilon(G))$ is called a \emph{subgroup pair}.
The subgroup pair depends only on the  $\mathrm{Aut}(G)$ class of an action $\epsilon$.  
We use the finer construct of action pairs instead of subgroup pairs 
to simplify our later translation to group theory calculations. 

Let $T=S/G=S/\epsilon(G)$, as an orbifold quotient, $T$ depends only on the $\mathrm{Aut}(G)$ class of an action $\epsilon$. 
The quotient $T$ is a compact surface of genus $h$ with $r$ hyperbolic cone points $z_1,\ldots,z_r$,
and the quotient map $\pi_G: S \rightarrow T$ is branched exactly over the cone points. 
In local coordinates, over the cone point $z_j$, the map $\pi_G$ is given by $z\rightarrow z^{m_j}$, 
for some $m_j \ge 2$. The  $m_j$ must be the orders of non-trivial elements of $G$. The $(r+1)$-tuple
\begin{equation}\label{eq-signature}
\mathfrak{s}=(h;m_1,\ldots,m_r)
\end{equation}
is called the \emph{signature of the action}. 
The ordering or labelling of the cone points is important, though the $m_j$ are typically given in non-decreasing order.

\begin{remark}\label{rk-stratafacts}
We collect here some useful notes about actions, signatures, and strata.
\begin{enumerate}
  \item Following \cite{ZVC} we call an action \emph{planar} if the quotient genus $h$ is zero. 
  In this case,  for convenience,  we may abbreviate the signature to $(m_1,\ldots,m_r)$.
  \item The relation between the  genus, the group order, and the signature is given by the Riemann-Hurwitz equation
    \begin{equation}\label{eq-RH} 
      \frac{2\sigma-2}{|G|} = 2h-2+r-\sum_{j=1}^r\frac{1}{m_j}.
\end{equation}  
  \item 
  As previously announced, for each possible group-signature pair $(G,\mathfrak{s})$, 
  there is a group actions moduli space, which we will define and discuss in detail in Section \ref{sec-modaction}. 
  If non-empty, this action space maps to the union of the closures one or more equisymmetric strata, all of the 
  same complex orbifold dimension. In the non-empty case, the common dimension of the strata closures 
   is called the \emph{Teichm\"{u}ller dimension} of the action space and the corresponding strata.
  The Teichm\"{u}ller dimension equals $3h-3 +r $  when the genus $\sigma$ of the surfaces $S$ is 2 or greater. 
  We may take this number as the definition of the \emph{Teichm\"{u}ller dimension of the action pair $(G,\mathfrak{s})$},
  even if the action space is empty.
\end{enumerate}
\end{remark}

\subsubsection{Uniformization}\label{subusbsec-uniformization} 

The $G$ action may be uniformized by Fuchsian groups as follows. There is pair of Fuchsian groups
\begin{equation}\label{eq-Fuchspair}
\Pi \trianglelefteq \Gamma < \mathrm{Aut}(\mathbb{H}),
\end{equation} 
acting on the hyperbolic plane $\mathbb{H}$,  
such that $\Pi$ is torsion free, $S\simeq \mathbb{H}/\Pi$, $T\simeq \mathbb{H}/\Gamma$, and the natural homomorphism 
$\Gamma/\Pi \rightarrow \mathrm{Aut}(S)$ takes $\Gamma/\Pi$ isomorphically onto $\epsilon(G)$.  
This is conveniently summarized in the following diagram, where the arrows are quotient maps by the groups decorating the arrows.
\begin{equation}\label{dia-uniformize} 
\xymatrix{
    \mathbb{H} \ar[r]^\Pi \ar[d]^{\Gamma} & S \ar[d]^{G} \\
    T \ar[r]^{id}       & T }
\end{equation}
The signature of $\Gamma$ is the same as the signature of the $G$ action.

We will identify the Fuchsian group $\Gamma$ with the  orbifold fundamental group of $T$, for a suitable base point $z_0$:
\begin{equation} 
\Gamma \simeq \pi_{1}^{orb}(T,z_0).\label{eq-Pi1b}
\end{equation}
The group $\pi_{1}^{orb}(T,z_0)$ is a topological construction, 
but it is automatically isomorphic to $\Gamma$  as a group of orbifold deck transformations 
since $\mathbb{H}$ is simply connected and $T\simeq \mathbb{H}/\Gamma$.

\subsubsection{Monodromies and path lifting}\label{subsubsec-monolift}

We discuss monodromies next, see  \cite{Mas} for more detail on monodromy. 
By using path lifting from $T$ to $S$, via $\pi_G$, the action $\epsilon$ determines a \emph{smooth monodromy}
\begin{equation}\label{eq-monodromy}
\xi:\Gamma\simeq \pi_{1}^{orb}(T,z_0) \twoheadrightarrow G,
\end{equation}
 well defined up to inner automorphisms of  $G$.
(For completeness, we give specific details of the construction of $\xi$ by path lifting in Section \ref{subsubsub-regcovers} below.)

Recall that a \emph{monodromy} must be an epimorphism, and it is \emph{smooth} if it has a torsion-free kernel.
We can use the realization of $\Gamma$ as a cocompact Fuchsian group to show that $\xi$ is smooth.
When $\xi$ is suitably normalised, by a proper choice of lift point lying over $z_0$, the relation between $\epsilon$ and $\xi$ becomes 
\begin{equation}\label{eq-eps-xi}
  \overline{\xi}=\epsilon^{-1},
\end{equation}
where $\overline{\xi}$ is the quotient isomorphism $\Gamma/\ker(\xi)\rightarrow G$. 
In this pairing, for $\omega \in \mathrm{Aut}(G)$, we have the association 
\begin{equation}\label{eq-eps-xi2}
\omega\circ \xi \leftrightarrow \epsilon\circ\omega^{-1}.
\end{equation}

In the other direction, upon fixing the conformal orbifold structure of $T$, 
any smooth monodromy $\xi^{\prime}:\Gamma\rightarrow G$ determines an action $\epsilon^{\prime}$ of
$G$ on a possibly different surface $S^{\prime}=\mathbb{H}/\ker(\xi^{\prime})$.
We use equation \eqref{eq-eps-xi} and the natural action of $\Gamma/\ker(\xi^{\prime})$ 
on $\mathbb{H}/\ker(\xi^{\prime})$ to define the action $\epsilon^\prime$. 
The orbit spaces satisfy the following conformal equivalence relation.
\begin{equation*}
S^{\prime}/\epsilon^{\prime}(G)=\mathbb{H}/\Gamma=S/\epsilon(G). 
\end{equation*}
We observe that $S^\prime \rightarrow T$ is an orbifold cover of finite degree of the compact space $T$, so $S^\prime$ is a closed Riemann surface. 
If $\xi^{\prime}=\omega\circ\xi$ for some $\omega\in \mathrm{Aut}(G)$, 
then $\ker(\xi^{\prime})=\ker(\xi)$, and so it follows that
$S^{\prime}$ is conformally equivalent to $S$, indeed we may take them to be the same:
\begin{equation*}
S^{\prime}=\mathbb{H}/\ker(\xi^{\prime})=\mathbb{H}/\ker(\xi)=S.
\end{equation*}

We call  $(T,\xi)$ a \emph{(smooth) monodromy pair}. 
The prior discussion, including equations \eqref{eq-monodromy} to \eqref{eq-eps-xi2}, 
show that we have a 1-1 identification  of $\rm{Aut}(G)$ classes of action pairs $(S,\epsilon)$ 
and $\rm{Aut}(G)$ classes of  monodromy pairs $(T,\xi)$. The discussion also shows that for classifying surfaces
with non-trivial automorphisms only the $\rm{Aut}(G)$ classes of action pairs or monodromy pairs need to be considered.

\subsubsection{Regular covering spaces and path lifting}\label{subsubsub-regcovers}
We can use regular covering space arguments to supplement our orbifold covering space methods.
Assume that $(X,G)$ is a orbifold pair with orbifold quotient $Y=X/G$ and orbifold cover $p:X \rightarrow Y$. 
Let  $p:X^\circ \rightarrow Y^\circ$ be the regular cover over the regular points, 
which we assume is a connected covering space. For each $g \in G$ we let $\epsilon(g)$ 
denote the deck transformation $x\rightarrow gx$. Pick a base point $y_0 \in Y^\circ$, 
and let $x_0$ be a specific point lying over $y_0$. We define a monodromy 
$\xi_{x_0}:\Gamma^\circ=\pi_1(Y^\circ,y_0) \rightarrow G$ as follows. 
For $\gamma \in \Gamma^\circ$, let $\tilde{\gamma}$ be a lift to $X^\circ$ with  
$\tilde{\gamma}(0)=x_0$. Since the cover is regular, there is a $g$ such that
\begin{equation}\label{eq-lift1}
\tilde{\gamma}(1)=\epsilon(g)(x_0)=gx_0,
\end{equation}
and then we set
\begin{equation}\label{eq-lift2}
\xi_{x_0}(\gamma)=g.
\end{equation}
 
This map is surjective since the covering space is connected and $\epsilon$ is a isomorphism to the deck transformation group.
If $hx_0$ is any other point lying over $y_0$, then
\begin{equation}\label{eq-conjlift}
\xi_{hx_0}(\gamma)=h\xi_{x_0}(\gamma)h^{-1}, \text{\ } \gamma \in \Gamma^\circ.
\end{equation} 
If we start off with a given monodromy $\xi:\Gamma^\circ \rightarrow G$, we can construct 
a covering space $p:X^\circ \rightarrow Y^\circ$ and find  $x_0 \in X^\circ$,
so that $\xi=\xi_{x_0}$. Thus, in our path lifting computations, we can do all our work using 
the loops on $Y^\circ$, a given monodromy $\xi$, and the three equations immediately above.

\subsection{Horizontal and vertical invariants}\label{subsec-horivert}
It turns out that the monodromy pairs are computationally more convenient for studying families of surfaces with automorphisms.
Let us formalize the relationship between the two types of pairs, discussed in the previous section.
Every action pair $(S,\epsilon)$ of $G$ on a surface $S$ with
signature $\mathfrak{s}$, uniquely determines and is uniquely determined by two things:
\begin{enumerate}
\item A continuous invariant (horizontal): The orbifold of $T=S/G$.  
\item A discrete or combinatorial invariant (vertical): The smooth monodromy $\xi:\Gamma\rightarrow G$, determined by $\epsilon$,
see equation \eqref{eq-eps-xi}.
\end{enumerate}
Specifically, an action pair $(S,\epsilon)$ produces the two unique items in $1, 2$, hence a monodromy pair $(T,\xi)$. Furthermore, each combination $(T,\xi)$ of the items in $1, 2$ produces a unique action $(S,\epsilon)$.
The two different invariants are combined in the map \eqref{eq-pBM} which we introduce a bit later. The base and fibres of the map \eqref{eq-pBM} are the items $1, 2$, which is why  we use the terms \emph{horizontal and vertical invariants}. 
The association $(S,\epsilon)\leftrightarrow (T,\xi)$ preserves the $\mathrm{Aut}(G)$ orbits on $G$ actions and monodromies. 

We will need to compare conformal equivalence classes of action pairs and monodromy pairs.
Two action pairs $(S_1, \epsilon_1)$, $(S_2, \epsilon_2)$ are \emph{conformally equivalent} if there is a conformal isomorphism 
$h: S_1 \rightarrow S_2$ and $\omega \in \mathrm{Aut}(G)$ such that 
\begin{equation}\label{eq-equiv-action}
(\epsilon_2\circ \omega^{-1})(g) =h\circ\epsilon_1(g)\circ h^{-1},\text{  } \forall g \in G.
\end{equation}
If action pairs are conformally equivalent then the signatures of the two $G$ actions are the same, upon suitable 
labelling of the cone points in both quotients. The following characterization is worth stating as a proposition, 
and may be found in \cite{GDH} and \cite{HPCRH}.
\begin{proposition}\label{prop-equivactions}
Let notation be as above. If $S_1$ and $S_2$ are both equal to a common surface $S$, 
then the image groups $\epsilon_1(G)$ and $\epsilon_2(G)$ are conjugate in $\mathrm{Aut}(S)$ if and only if the actions are conformally equivalent. 
\end{proposition}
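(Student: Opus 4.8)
The plan is to reduce the statement to an elementary observation about conjugation inside the group $\mathrm{Aut}(S)$. Once $S_1=S_2=S$, the conformal isomorphism $h$ appearing in the definition \eqref{eq-equiv-action} is just an element of $\mathrm{Aut}(S)$, and conjugation by $h$ is an inner automorphism $c_h$ of $\mathrm{Aut}(S)$, namely $c_h(\phi)=h\,\phi\,h^{-1}$. Everything then follows from the fact that, for $\sigma\ge 2$, $\mathrm{Aut}(S)$ is finite and each $\epsilon_i\colon G\to\mathrm{Aut}(S)$ is a monomorphism, hence an isomorphism onto its image $\epsilon_i(G)$.

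For the ``conformally equivalent $\Rightarrow$ conjugate'' direction, I would start from a pair $(h,\omega)$ with $h\in\mathrm{Aut}(S)$ and $\omega\in\mathrm{Aut}(G)$ satisfying $(\epsilon_2\circ\omega^{-1})(g)=h\,\epsilon_1(g)\,h^{-1}$ for all $g\in G$. Letting $g$ range over $G$ and taking images of both sides, the left side sweeps out $\epsilon_2(\omega^{-1}(G))=\epsilon_2(G)$ because $\omega$ is bijective, while the right side sweeps out $h\,\epsilon_1(G)\,h^{-1}$; hence $\epsilon_2(G)=h\,\epsilon_1(G)\,h^{-1}$, which is conjugacy in $\mathrm{Aut}(S)$. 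For the converse, suppose $\epsilon_2(G)=h\,\epsilon_1(G)\,h^{-1}$ for some $h\in\mathrm{Aut}(S)$. Then $c_h$ restricts to a group isomorphism $\epsilon_1(G)\to\epsilon_2(G)$, and I would define $\omega:=\epsilon_1^{-1}\circ c_h^{-1}\circ\epsilon_2$, where $\epsilon_i^{-1}$ denotes the inverse of the isomorphism $\epsilon_i\colon G\to\epsilon_i(G)$ (well defined precisely because each $\epsilon_i$ is a monomorphism). As a composite of three group isomorphisms $\omega\in\mathrm{Aut}(G)$, and unwinding the definition gives $\epsilon_2\circ\omega^{-1}=c_h\circ\epsilon_1$, i.e. exactly the relation \eqref{eq-equiv-action} for this $h$ and $\omega$. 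The only delicate point is bookkeeping of the $\mathrm{Aut}(G)$-twisting convention so that $\omega^{-1}$, rather than $\omega$, emerges in \eqref{eq-equiv-action}; swapping $h\leftrightarrow h^{-1}$ or $\omega\leftrightarrow\omega^{-1}$ absorbs any discrepancy, and it is consistent with the association \eqref{eq-eps-xi2}.

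I expect no serious obstacle here: the content is purely formal. The remark on cone-point labelling made just before the proposition is automatically satisfied in this setting, since both actions live on the same $S$ and the conjugating automorphism $h$ carries the branch data of $S/\epsilon_1(G)$ onto that of $S/\epsilon_2(G)$, so the signatures and their cone-point orderings agree. Thus the ``hard'' part is merely making the two twisting conventions line up; the rest is a direct verification using finiteness of $\mathrm{Aut}(S)$ and injectivity of the $\epsilon_i$.
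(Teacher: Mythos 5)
Your argument is correct. Once $S_1=S_2=S$, the conformal isomorphism in \eqref{eq-equiv-action} is simply an element $h\in\mathrm{Aut}(S)$; your forward direction (letting $g$ range over $G$ and using bijectivity of $\omega$ to get $\epsilon_2(G)=h\,\epsilon_1(G)\,h^{-1}$) and your converse (setting $\omega=\epsilon_1^{-1}\circ c_{h}^{-1}\circ\epsilon_2$, so that $\epsilon_2\circ\omega^{-1}=c_h\circ\epsilon_1$, which is exactly \eqref{eq-equiv-action}) both check out, and the twisting convention lands on $\omega^{-1}$ as required. Note that the paper itself gives no proof of this proposition --- it is stated with a citation to \cite{GDH} and \cite{HPCRH} --- so there is no in-text argument to compare against; your self-contained verification is the standard one. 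One cosmetic remark: finiteness of $\mathrm{Aut}(S)$ plays no role in your argument; injectivity of each $\epsilon_i$ is all that is needed to invert it onto its image, so you could drop that hypothesis from the narrative.
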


Likewise, we declare that two monodromy pairs $(T_1, \xi_1)$, $(T_2, \xi_2)$ are conformally equivalent if there is a conformal isomorphism 
$k: T_1 \rightarrow T_2$, respecting the cone point structure, and  $\omega \in \mathrm{Aut}(G)$ such that 
\begin{equation}\label{eq-equiv-mono}
(\omega\circ\xi_2)(\gamma) = (\xi_1\circ k_\ast^{-1})(\gamma), \text{  } \forall \gamma \in \Gamma_2=\pi_1^{orb}(T_2,z_0),
\end{equation}
where $k_\ast$ is the induced map on fundamental groups.
Our definitions take into account that the subgroups and surfaces constructed are invariant under the $\mathrm{Aut}(G)$ action.  

\begin{remark}\label{rk-topequiv}
If the map $h: S_1 \rightarrow S_2$ is a homeomorphism then the action pairs
$(S_1, \epsilon_1)$ and $(S_2, \epsilon_2)$ are called \emph{topologically equivalent}.
There is a similar statement for  $k: T_1 \rightarrow T_2$ and the monodromy pairs $(T_1, \xi_1)$ and $(T_2, \xi_2)$. 
For more information on the equivalence of surfaces with automorphisms, see \cite{Br2}.   
\end{remark}
We finish this section with an example illustrating some of the ideas we have discussed. We will continue using the example for illustration  
as we introduce additional ideas. 

\begin{example}\label{ex-S3-2233-1} 
Let $G=\Sigma_3= \langle x,y:x^2=y^3=1,y^x=y^{-1} \rangle$. Assume that $G$ acts on a surface $S$ with signature $\mathfrak{s}=(0;2,2,3,3)$.  
By the Riemann-Hurwitz formula \eqref{eq-RH}, the genus of  $S$ is 2.
The corresponding Fuchsian group $\Gamma$ has this presentation:  
\begin{equation}\label{eq-GammaPrez}
 \Gamma =\Gamma_\mathfrak{s}=\left\langle \gamma _{1},\gamma _{2},\gamma _{3},\gamma _{4}:\gamma _{1}\gamma _{2}\gamma _{3}\gamma _{4}
 =\gamma _{1}^{2}=\gamma _{2}^{2}=\gamma _{3}^{3}=\gamma _{4}^{3}=1\right\rangle.
\end{equation}
Any monodromy $\xi$ is determined by the generating vector:
\begin{equation}\label{eq-GV-S3-2233}
\mathcal{V} = (c_1,c_2,c_3,c_4) = (\xi(\gamma _{1}),\xi(\gamma _{2}),\xi(\gamma _{3}),\xi(\gamma _{4})).
\end{equation}
(See Section \ref{subsec-ActionModSpace} for a more on generating vectors.)
Since $G = \langle c_1,c_2,c_3,c_4\rangle$ and the  $\mathcal{V}$  must satisfy all the relations in \eqref{eq-GammaPrez}
then we can determine that there are  $12$ such vectors. Therefore, there are two $\mathrm{Aut}(G)$ classes of vectors.
Representatives of these classes are the vectors $\mathcal{V}_1=(x,x,y,y^{-1})$ and   $\mathcal{V}_2=(x,xy,y,y)$. 
\end{example}

\section{Group action moduli spaces}\label{sec-modaction}

\subsection{A moduli space for group actions}\label{subsec-ActionModSpace}
Let $S_{0}$ be a reference surface and select some orientation preserving
homeomorphism $h:S_{0}\rightarrow S$. We may pull back a conformal $G$-action
on $S$, via $h$, to a topological action of $h^{-1}Gh$ on $S_{0}$, and we have
an identification $\overline{h}:S_{0}/h^{-1}Gh\rightarrow S/G$. On the group
level, setting $\Gamma$ $=\pi_{1}^{orb}(S/G)$, $\Gamma_{0}$ $=\pi_{1}%
^{orb}(S_{0}/h^{-1}Gh)$ we get an isomorphism $\overline{h}_{\ast}:\Gamma
_{0}\rightarrow\Gamma$. The group $\Gamma$ is isomorphic to a Fuchsian group
$\rho(\Gamma)$ $\subset PSL_{2}(\mathbb{R})$, where the map $\rho:\Gamma$
$\rightarrow PSL_{2}(\mathbb{R})$ is a monomorphism, and such that $S/G$ is
conformally equivalent to $\mathbb{H}/\rho(\Gamma)$. We can make all these
various Fuchsian groups isomorphic images of $\Gamma_{0}$ by $\rho
\circ\overline{h}_{\ast}:\Gamma_{0}\rightarrow PSL_{2}(\mathbb{R})$. If
$h^{\prime}$ $:S_{0}\rightarrow S$ is another homeomorphism and $x\in
PSL_{2}(\mathbb{R})$, then setting $\rho^{\prime}=\mathrm{Ad}_{x}\circ
\rho\circ\overline{h^{\prime}}_{\ast}$, $\rho^{\prime}(\Gamma)$ is a subgroup
conjugate to $\rho(\Gamma)$ and the new orbifold $\mathbb{H}/\rho^{\prime
}(\Gamma)$ is conformally equivalent to $\mathbb{H}/\rho(\Gamma)$.

\begin{notation}\label{note-Adx}
The adjoint representation by inner automorphisms is defined by 
$\mathrm{Ad}_{x}(y)=xyx^{-1}=y^{x^{-1}}$, it is an  automorphism of any group in which $x$  and $y$ reside.
\end{notation}

Following the methods in \cite{MaSi}, for a fixed signature $\mathfrak{s}$, we define the Teichm\"{u}ller space
\begin{equation}\label{eq-teichdef}
\mathcal{T}_{\mathfrak{s}}=\left\{  \rho:\Gamma_{0}\rightarrow PSL_{2}(\mathbb{R}%
)\right\}  /\left(  \mathrm{conjugation}\text{ \textrm{in }}PSL_{2}%
(\mathbb{R})\right)  .
\end{equation}
Following our previous discussion, if $\mu=\left(  \overline{h^{\prime}%
}\right)_\ast  ^{-1}\circ\overline{h}_{\ast}\in$ $M_{\mathfrak{s}}=\mathrm{Out}^{+}(\Gamma
_{0})$, and $x\in PSL_{2}(\mathbb{R})$, then
\begin{equation}\label{eq-Teichmodaction}
 \rho^{\prime}=\mathrm{Ad} _{x}\circ\rho\circ\mu^{-1}.
\end{equation}
Thus $M_{\mathfrak{s}}$ acts upon $\mathcal{T}_{\mathfrak{s}}$ and the
entire orbit $M_{\mathfrak{s}}\cdot\rho$ consists of various conjugacy classes of Fuchsian groups
with conformally equivalent orbit spaces. With a little work, we can show that
distinct orbits correspond to conformally inequivalent orbit spaces. Thus, the
moduli space 
\begin{equation}\label{eq-modspacedef}
\mathcal{M}_{\mathfrak{s}}=\mathcal{T}_{\mathfrak{s}}/M_{\mathfrak{s}}
\end{equation}
solves our horizontal invariant problem of classifying conformal equivalence classes of orbifolds $S/G$.

Now suppose we have a monodromy $\xi_{\Gamma}:\Gamma\rightarrow G$ determining
a surface $S$ lying over $\mathbb{H}/\rho(\Gamma)$. As previously discussed we need only work 
with the $\mathrm{Aut}(G)$ classes of monodromies:
\begin{equation}\label{eq-AutGmono}%
\xi_{\Gamma}^{\mathrm{Aut}(G)}=\left\{  \omega\circ\xi_{\Gamma}:\omega\in\mathrm{Aut}%
(G)\right\}  . 
\end{equation}
Since $\xi_{\Gamma}$ is surjective, then $\xi_{\Gamma}^{\mathrm{Aut}(G)}$ has $\left\vert
\mathrm{Aut}(G)\right\vert $ distinct monodromies. Now, $\xi=\xi_{\Gamma}%
\circ\overline{h}_{\ast}:\Gamma_{0}\rightarrow G$, is a monodromy based on our
reference surface $S_{0}$. The association  $\xi\leftrightarrow$ $\xi_{\Gamma}$ is a
bijection, and $\xi^{\mathrm{Aut}(G)}\leftrightarrow$ $\xi_{\Gamma
}^{\mathrm{Aut}(G)}$ is a bijection between classes of monodromies. So, we will now assume
that every monodromy class  $\xi^{\mathrm{Aut}(G)}$ is based at $S_{0}$. If we fix $\xi_{\Gamma}^{\mathrm{Aut}(G)}$
and range over all $h:S_{0}\rightarrow S$ the resulting $\xi^{\mathrm{Aut}(G)}$ classes range over
the orbit $\{\xi^{\mathrm{Aut}(G)}\circ\mu^{-1}$, $\mu\in M_{\mathfrak{s}}\}$.

Next, we pick a standard generating set $\mathcal{W}=\left(  w_{1},\ldots
w_{l}\right)  $ of $\Gamma_{0}$ (see, for instance, \cite{BrCoIz1}). Each monodromy determines a \emph{generating
vector} of the $G$ action:
\begin{equation}\label{eq-W2V}
\mathcal{V}=\langle\xi,\mathcal{W}\rangle = \left(  \xi(w_{1}),\ldots\xi(w_{l})\right). 
\end{equation}
As in the case of monodromies, we have $\left\vert \mathrm{Aut}(G)\right\vert$ 
distinct vectors in the $\mathrm{Aut}(G)$ class $\mathcal{V}^{\mathrm{Aut}(G)}$.
The $\mathrm{Aut}(G)$ classes of generating vectors (classes of monodromies), 
serves as a \emph{computable vertical invariant}.

Observe that $M_\mathfrak{s}$ acts upon the generating vector classes $\mathcal{V}^{\mathrm{Aut}(G)}$ 
via the left action of $M_\mathfrak{s}$ on monodromy classes  $\xi^{\mathrm{Aut}(G)}$, namely 
\begin{equation}\label{eq-GVmodaction} 
\mu\cdot \mathcal{V}^{\mathrm{Aut}(G)} 
= \langle\mu\cdot\xi^{\mathrm{Aut}(G)},\mathcal{W}\rangle 
= \langle \xi^{\mathrm{Aut}(G)}\circ\mu^{-1},\mathcal{W}\rangle. 
\end{equation}
We are now ready to define the moduli space of $G$ actions with signature $\mathfrak{s}$.

\begin{definition}\label{def-SGs}
Let $\Gamma_{0}$, $\mathcal{W}$, $\mathcal{T}_{\mathfrak{s}}$, $M_\mathfrak{s}$, and $\mathcal{M}_\mathfrak{s}$ be as
above, and let $\mathcal{F}_\mathfrak{s}$ be the (finite) set of generating vector classes
$\mathcal{V}^{\mathrm{Aut}(G)}$ of $G$ with signature $\mathfrak{s}$. Let $M_{\mathfrak{s}}$ act upon
$\mathcal{T}_{\mathfrak{s}}$ and $\mathcal{F}_\mathfrak{s}$ 
as described in equations \eqref{eq-Teichmodaction} and \eqref{eq-GVmodaction}.
Then,
\begin{equation}\label{eq-Bs}%
\mathcal{S}_{G,\mathfrak{s}}=\left(  \mathcal{T}_{\mathfrak{s}}\times\mathcal{F_\mathfrak{s}}\right)  /M_{\mathfrak{s}}
\end{equation}
is the \emph{moduli space of conformal equivalence classes of $G$ action pairs with
signature $\mathfrak{s}$}. The natural projection
\begin{equation}\label{eq-pBM}
p:\mathcal{S}_{G,\mathfrak{s}}=\left(  \mathcal{T}_{\mathfrak{s}}\times\mathcal{F}_\mathfrak{s}\right)
/M_{\mathfrak{s}}\rightarrow\mathcal{M}_{\mathfrak{s}} 
\end{equation}
is the orbifold projection of conformal equivalence classes of $G$ action pair classes  
$\left(S,\epsilon\right)^{\mathrm{Aut}(G)}$ to conformal equivalence classes of quotient orbifolds $T=S/G$.

Each connected component of $\mathcal{S}_{G,\mathfrak{s}}$ is called a \emph{(closed) stratum} or a \emph{modular companion component}
of the moduli space of $G$ action pairs with signature $\mathfrak{s}$. As in Proposition  \ref{prop-orbicover}, we may write
\begin{equation}\label{eq-modcomp-comp}
  \mathcal{S}_{G,\mathfrak{s}}=\bigcup_i\mathcal{S}^i_{G,\mathfrak{s}}
\end{equation}
as a decomposition into modular companion components whose projection maps   
\begin{equation}\label{eq-modcomp-map}
  p_i=p:\mathcal{S}^i_{G,\mathfrak{s}}\rightarrow \mathcal{M}_{\mathfrak{s}}
\end{equation}
are complex orbifold covering spaces. 
\end{definition}

\begin{remark}\label{rk-reg-sing}
As an orbifold, the singular set $\mathcal{M}_{\mathfrak{s}}^{sing}$ of $\mathcal{M}_{\mathfrak{s}}$ is the image of
points in $\mathcal{T}_{\mathfrak{s}}$ that have extra automorphisms. It is a closed, analytic
subvariety of $\mathcal{M}_{\mathfrak{s}}$, the regular points are $\mathcal{M}%
_{\mathfrak{s}}^{\circ}=$ $\mathcal{M}_{\mathfrak{s}}- \mathcal{M}_{\mathfrak{s}}^{sing}$. Normally, there
are no automorphisms of the orbifold surfaces in $\mathcal{M}_{\mathfrak{s}}^{\circ}$ unless we have
non-maximal actions, such as Type $2$ actions discussed in \cite{BrCoIz2}. 
Over the regular set $\mathcal{M}_{\mathfrak{s}}^{\circ}$, the fibres of $p:\mathcal{S}_{G,s}%
\rightarrow\mathcal{M}_{\mathfrak{s}}$ have the same cardinality and the same monodromy
structure, namely the $M_{\mathfrak{s}}$ permutation structure of $\left\{  \mathcal{V}%
^{\mathrm{Aut}(G)}\right\}  $. There is a similar statement for each of the modular companion components. 
\end{remark}

\subsection{Modular companions}\label{subsec-modcomp}
We now turn to the vertical invariants of $(G,\mathfrak{s})$ action moduli spaces. 
\begin{definition}
In the mapping \eqref{eq-pBM} $p:\mathcal{S}_{G,\mathfrak{s}}\rightarrow\mathcal{M}_{\mathfrak{s}}$
the preimage $p^{-1}(\overline{\rho})$, $(\overline{\rho}=M_{\mathfrak{s}}\rho)$
corresponds to the set of surfaces $S$ with a $(G,\mathfrak{s})$ action and whose quotient orbifolds are conformally equivalent to $S/G=\mathbb{H}%
/\rho(\Gamma_{0})$. Any two such surfaces are called \emph{modular companions} if they belong to the same connected component of  $\mathcal{S}_{G,\mathfrak{s}}$.
\end{definition}

\begin{remark}\label{rk-modcompdef}
Suppose that $S_1$ and $S_2$ are modular companions. By definition, we have $(G,\mathfrak{s})$ actions
$\epsilon_1$ and $\epsilon_2$ on $S_1$ and $S_2$,  respectively, and a quotient orbifold $T$ 
satisfying $S_1/G=T=S_2/G$.  
Moreover, it can be shown that the monodromies $\xi_1, \xi_2$ defining the surfaces and the actions satisfy
\[
\xi_2 =\omega\circ\xi_1\circ\mu^{-1}, \mathrm{\text{ }for\text{ }some\text{ }} \omega \in \mathrm{Aut}(G), \mu \in \mathrm{Aut}^+(\Gamma).
\]
and that the actions are topologically equivalent. The actions may not be conformally equivalent 
even though they define the same quotient surface. There is a ``combinatorial'' obstruction to 
conformal equivalence, which we explore in  Sections \ref{sec-tiling}, \ref{sec-S4}, and \ref{sec-Cayley}, using Cayley graphs.
However, since they lie in the same connected component of  $\mathcal{S}_{G,\mathfrak{s}}$, 
one action can be deformed to the other through a one parameter family of $(G,\mathfrak{s})$ actions.
\end{remark}

\begin{remark}\label{rk-2defs}
Modular companions were introduced in \cite{BrCoIz2} and we began a more serious study of them in this paper. 
Initially, we defined the relationship for conformally inequivalent pairs of surfaces, but we now allow 
conformally equivalent surfaces to be modular companions so that modular companionship is an equivalence relation. 
By the discussion in  \cite{BrCoIz2} and the discussion in the preceding Remark, the two definitions
(algebraic and topological) are equivalent with this trivial adjustment. 
We must remind ourselves that the modular companionship relation is always 
in reference to some $(G,\mathfrak{s})$ pair, and that we are comparing 
two action pairs $(S_1,\epsilon_1)$,  $(S_2,\epsilon_2)$ or two monodromy pairs $(T,\xi_1)$, $(T,\xi_2)$. 
\end{remark}

\subsection{Relation to equisymmetric strata}\label{subsec-eqstrata}

If $(F)$ is a conjugacy class of finite subgroups of the modular group $M_\sigma$,  then the equisymmetric stratum defined by $(F)$ is
\[
\mathop{\overset{\text{ }\circ}{\mathcal{M}}}{\vphantom{m}_\sigma^{(F)}}= \{[S]: \Sigma(S)=(F)\},
\]
where $[S]$ is the point in $\mathcal{M}_\sigma$ determined by $S$.  The closure of a non-empty stratum equals 
\[ 
\mathcal{M}_\sigma^{(F)} = \{[S]: \Sigma(S)\ge (F)\},
\]
where $(F_1)\le(F_2)$ means $F_1$ is contained in a conjugate of $F_2$.  We ignore the case of empty strata, which correspond to non-maximal actions. 

Considering the points of the action stratum $\mathcal{S}^i_{G,\mathfrak{s}}$ to be represented by subgroup pairs $(S,\epsilon(G))$, we get a map  
\[ 
\Theta :\mathcal{S}^i_{G,\mathfrak{s}} \rightarrow \mathcal{M}_\sigma,\text{  } \Theta(S,\epsilon(G)) = [S],
\]
where $[S]$ is the conformal equivalence class of $S$. 
In \cite{BrCoIz1} it was demonstrated that if the corresponding equisymmetric stratum is non-empty, then the image of $\Theta$ is the closure of the stratum in $\mathcal{M}_\sigma$ (also \cite{GDH} for surjectivity). 
Also in \cite{GDH}, it was proven that the map $\Theta$ is a normalization of the stratum closure, as well as developing a criterion for computing the non-normal points of the stratum closure. This was utilized in \cite{HPCRH} to compute many examples of non-normal stratum closures.  

One use of the branched cover \eqref{eq-modcomp-map} is to determine the topology of the action strata, as done in the one dimensional case in \cite{BrCoIz1,BrCoIz2}. The description of the topology sheds some light on the topology of the branch locus. A next step would be to determine the topology of strata corresponding to actions branched over 5 points on a sphere. However, that is beyond the scope of this paper.       

 \begin{example}\label{ex-S3-2233-2}
We continue Example \ref{ex-S3-2233-1}. 
It can be shown that the space of quotients $\mathcal{M}_{\mathfrak{s}}$  is a sphere with two punctures and one cone point of order $2$, though the details are beyond the scope of this paper. 

Now, consider the following braid operations on $(\gamma _{1},\gamma _{2},\gamma _{3},\gamma _{4})$.  
\begin{eqnarray*}
\Phi_{1,2}: (\gamma _{1},\gamma _{2},\gamma _{3},\gamma _{4}) &\rightarrow& (\gamma _{2},\gamma _{1}^{\gamma _{2}},\gamma _{3},\gamma _{4})  \\
\nonumber % Remove numbering (before each equation)
\Phi_{2,3}: (\gamma _{1},\gamma _{2},\gamma _{3},\gamma _{4}) &\rightarrow&   (\gamma _{1},\gamma _{3},\gamma _{2}^{\gamma _{3}},\gamma _{4}) \\
\nonumber % Remove numbering (before each equation)
\Phi_{3,4}: (\gamma _{1},\gamma _{2},\gamma _{3},\gamma _{4}) &\rightarrow&   (\gamma _{1},\gamma _{2},\gamma _{4},\gamma _{3}^{\gamma _{4}})
\end{eqnarray*}
It is well known that the modular group $M_\mathfrak{s}$ is the set of outer automorphisms of 
$\Gamma_\mathfrak{s}$  induced by combinations of the above braid operations and their inverses that preserve the signature. 
In particular $\Phi_{1,2},\Phi^2_{2,3},\Phi_{3,4} \in M_\mathfrak{s}$. 
A braid operation $\Phi^{-1}$ of $M_\mathfrak{s}$ 
acts upon a generating vector $\mathcal{V}= (c_1,c_2,c_3,c_4)$, by applying the formula for $\Phi$ to  $\mathcal{V}$ (change $\gamma$ to $c$).
See \cite{BrCoIz1} for details.

Now let $\mathcal{V}_1 =(x,x,y,y^{-1})$ and $\mathcal{V}_2 =(x,xy,y,y)$ be the generating vectors
from Example \ref{ex-S3-2233-1}. By easy hand calculations we see that
\[ 
\Phi^{-1}_{1,2}\cdot \mathcal{V}_1=\mathcal{V}_1 \text{ and } \Phi^{-1}_{1,2}\cdot \mathcal{V}_2=(xy,xy^2,y,y)=\mathrm{Ad}_{y}\cdot\mathcal{V}_2.
\]   

So $\Phi_{1,2}$ and $\Phi^{-1}_{1,2}$  fix both $\mathcal{V}_1^{\mathrm{Aut}(G)}$ and $\mathcal{V}_2^{\mathrm{Aut}(G)}$.
Similar calculations also show that $\Phi_{3,4}$ acts trivially. Finally,  $\Phi^2_{2,3}$ and $\Phi^{-2}_{2,3}$ interchange 
$\mathcal{V}_1^{\mathrm{Aut}(G)}$ and $\mathcal{V}_2^{\mathrm{Aut}(G)}$, so that they define modular companions. 
It follows that the action moduli space is a degree 2 orbifold cover of $\mathcal{M}_{\mathfrak{s}}$.

The case of any planar action with signature $(0;m_1, m_2,m_3, m_4)$ can be handled similarly, taking symmetries of the signature into account. 
Again, see \cite{BrCoIz1} for additional details.
\end{example}

\section{Equivariant tilings of surfaces}\label{sec-tiling}

We may view the pair $(T,\xi)$ as a recipe for constructing an equivariantly tiled surface
$S$ from ``puzzle pieces'', as described in
\cite{BrPaWo}, Section 6.4. The process, which we describe next, and the
subsequent construction of a dual Cayley graph embedded on the surface (Section \ref{sec-Cayley}), will
give us geometrical tools for comparing modular companions. 
In this section, we discuss the general situation for arbitrary quotients $T=S/G$. 
In the next section (Section \ref{sec-S4}), we give detailed examples and calculations for spherical quotients with four branch points, 
which form equisymmetric strata of Teichm\"{u}ller dimension one.
The methods of Section \ref{sec-S4} can be easily extended to arbitrary signatures.

\subsection{Cut systems  of $T$}
Next, we examine cut systems on the quotient surface $T$. This analysis is inspired by the method of cuts 
used to create a Riemann surface upon which an algebraic function is properly defined. A reference for this method is \cite{JS}.  

 As in Section \ref{subsubsec-monolift}, denote by $T^\circ$ and $S^\circ$ the punctured surfaces obtained by removing the cone
points of $T$ and their $\pi_G$ pre-images in $S$. 
The map $\pi_G:S^\circ\rightarrow T^\circ$ is a conformal covering space, 
whose group of deck transformations is $\left\{\epsilon(g): g\in G\right\}$.
We use this cover in analyzing lifts of \emph{cut systems} on $T$, which we now introduce.   
Make a system of cuts in $T$ by means of an embedded graph $\mathcal{E}$,
that dissects $T$ into a open polygon $\mathcal{P}^\circ=T-\mathcal{E}$ with a piecewise smooth,
curvilinear boundary. 
For quick examples see Figure \ref{tikz-E1-4} in the next section. 
Our full requirements are in the following definition.

\begin{definition}
Let  $G$ act conformally on  $S$ and let $T=S/G$ be the quotient orbifold. Let $\mathcal{E}$ be an embedded graph
in $T$  with $k$ edges such that the vertex set contains all the cone points of $T$; the $k$ arcs are smooth, 
with definite tangents at their end points; there are no arc intersections except at vertices; 
and the arc intersections at vertices have distinct tangents. A regular point of $T$ is not allowed to be a terminal vertex. 
We also assume that the complement of $\mathcal{E}$ is an open disc with a piecewise smooth curvilinear boundary (the main point).  
Such an embedded graph will be called a \emph{cut system}.
\end{definition}

For brevity's sake, we limit our examples of cut systems to the four cases in Figure \ref{tikz-E1-4} for orbifold 
spheres with $4$ cone points, yielding one dimensional strata as advertised in this paper's title. 
The cone points are black nodes, and in some cases a white node, denoting a regular point, has been added.  
The only one dimensional strata we are missing is a torus quotient with one cone point, which we leave out because of space limitations. 
Our definition does allow for a positive quotient genus and arbitrary numbers of cone points.  
All these more general systems can be analyzed using the methods we develop here.
Models of the polygons corresponding to the cut systems in Figure \ref{tikz-E1-4} are given are in Figure \ref{tikz-P1-4}. 
The cone points in polygons are the black nodes. The polygons given are models only, and they look very different in their natural geometry.

\begin{remark}\label{rk-homology-tree}
Using the long exact sequence in homology of the pair $(T,\mathcal{E)}$ and the excision theorem, we see that $H_1(T)=H_1(\mathcal{E})$. 
So, in the genus zero case, $\mathcal{E}$ must be a tree. In higher quotient genus cases some edges will be loops. Observe that the number of trees on a finite set of points grows very fast, according to Cayley's formula. Not all possibilities for four cone points on a sphere are given in Figure \ref{tikz-E1-4}. 
\end{remark}

\subsection{Lifted polygons in the tiling of $S$}\label{subsec-tiling}   

We consider the lifts to $S$ of various objects in $T$ via the branched
covering $\pi_{G}:S\rightarrow T$. Let $\widetilde{\mathcal{E}}=\pi_G^{-1}(\mathcal{E})$ be the lift
of $\mathcal{E}$ to $S$. This lift induces a tiling on $S$ with the following characteristics:

\begin{enumerate}
\item The complement $S-\widetilde{\mathcal{E}}$ is a disjoint union of
open discs (open polygons), each of which is conformally equivalent to $\mathcal{P}_0 = T-\mathcal{E}$ via
$\pi_{G}$. Therefore, it defines a \emph{map} on $S$ in the standard graph theoretic sense.
The open polygons are permuted simply transitively by $G$. Select a distinguished open polygon 
$\widetilde{\mathcal{P}^\circ}\subset S-\widetilde{\mathcal{E}}$. Using the simple transitivity, 
we may label the open polygons via the disjoint union
\[
S-\widetilde{\mathcal{E}} =\bigcup_{g \in G}g\widetilde{\mathcal{P}^\circ}.
\] 
\begin{notation}
For convenience, if there will be no confusion, we drop the $\widetilde{\ }$ over the polygons.
\end{notation}

\item Let $e$ be an open edge of $\mathcal{E}$ and  $\tilde{e}$ any open edge lying over $e$, we say that $\tilde{e}$
is an \emph{edge of type $e$}. Likewise, if  $v\in \mathcal{E}$ is a cone point or free (unramified) vertex, then the 
vertices $\tilde{v}$ of  lying over $v$ are said to be of\emph{ type $v$}. Vertices in $\widetilde{\mathcal{E}}$ of the same type inherit the  
cone point labeling in $\mathcal{E}$. If there is more than one regular vertex, we need to label them as well.

\item Every edge $\tilde{e}$ in $\widetilde{\mathcal{E}}$ connects exactly two distinct
vertices in $\widetilde{\mathcal{E}}$, at least in the sphere quotient case.
In the sphere quotient case, according to Remark \ref{rk-homology-tree}, $\mathcal{E}$ is a tree and so has no loops. Therefore, the endpoints of $e$ 
are distinct, so that the endpoints of $\tilde{e}$ lie in different fibres of  $\pi_G$.

If $T$ has genus $\ge 1$ then $\mathcal{E}$ may have loops and $\widetilde{\mathcal{E}}$ may also have loops. 
This can be repaired by bisecting the loops in  $\mathcal{E}$  with regular points at the cost of additional complexity. 
Because of length limitations of this paper we skip this interesting topic. The first non-trivial case is when $T$ is a torus with a cone point, as considered in \cite{BrCoIz2}.

\item In fact, $\widetilde{\mathcal{E}}\rightarrow\mathcal{E}$ is a regular branched covering of
graphs with deck transformation group $\left\{\epsilon(g) :  g\in G\right\} $, 
restricted to the edges. The action on the set of closed edges is free.  
For, suppose  $g\overline{e}=\overline{e}$ as a set, for some closed edge $\overline{e}$
in $\widetilde{\mathcal{E}}$. Then $g$ must either fix the endpoints or
switch them. If an end point $v$ is fixed by $g$, then, as  $g$ acts as a
rotation at $v$, it cannot fix the tangent of $\overline{e}$ at $v$. If $g$ switches the
end points then it must fix a point in the interior of $e$. However, by
construction, any such point must be a vertex. 
We assumed that $\overline{e}$ had distinct endpoints, though that need not happen. 
However, a similar argument still shows that $g$ is trivial.

\item By the construction of  $\mathcal{E}$, and the structure of the orbifold cover $\pi_G$, every 
lift $\tilde{v}$ of a vertex $v\in \mathcal{E}$ has a neighbourhood $U$ of the following form.
There is a small disc-like neigbourhood $V$ of $v$ in $T$, with $U=\pi_G^{-1}(V)$, such that in local complex coordinates,  
$\pi_G$ has the form  $z\rightarrow z^m$ ($m=1$ for a regular point). 
Also, the stabilizer in $G$ of $\tilde{v}$, of order $m$,
acts by $z\rightarrow \zeta^jz$ for some primitive $m$th root of unity $\zeta$. Next, 
by making $V$ small enough, the $k_v$ edges of $\mathcal{E}$, incident with $v$, cut up $U$ into $k_v$ open sectors based at $v$.
Lifting to $S$, the neighbourhood $U$ will be cut up into $m\times k_v$ open sectors, based at $\tilde{v}$. 
Each sector has a label determined by 
\begin{equation}\label{eq-seclabel}
sectors_g = U\cap g\mathcal{P}^\circ  
\end{equation}
for an appropriate set of elements $g$ in $G$.
The connected components of $sectors_g$ are the sectors with label $g$, there may be more than one sector with the same label. 
The action of the $G$ stabilizer at $\tilde{v}$ induces a permutation of the sectors and hence a pattern on the labelling.     

\item In ``good'' cases, the closure in $S$ of each open polygon of $S-\widetilde{\mathcal{E}}$ is
an embedded, closed polygon in $S$. I.e., the closed polygon is a topological closed disc, there are no 
self-intersections on the boundary, and so the boundary is a topological circle with $2k$ edges and $2k$ vertices.
There are degenerate cases where this may fail, as described in the next Section (\ref{subsec-ribbon-bad-poly-lift}). 
In every case, we denote the closure of $\widetilde{\mathcal{P}^\circ}$ by $\widetilde{\mathcal{P}}$  
or just $\mathcal{P}$, if there will be no confusion. 

\item Let  $\widetilde{\mathcal{P}}^\star$, called a \emph{nipped polygon},  denote the closed polygon $\widetilde{\mathcal{P}}$ 
with the vertices removed. We will use  $\mathcal{P}^\star$ if there will be no confusion.
In the absence of an edge collapse, described in the next section (\ref{subsec-ribbon-bad-poly-lift}), 
each  nipped polygon  $g\mathcal{P}^\star$, is homeomorphically embedded in $S$, as a planar nipped polygon. 
Specifically, there is a plane convex polygon $\mathcal{P}_{plane}$ and a continuous map $\mathcal{P}_{plane} \rightarrow g\mathcal{P}$ which is a homeomorphism away from the vertices.

\end{enumerate}

\subsection{Boundaries, ribbon graphs, and degenerate lifts}\label{subsec-ribbon-bad-poly-lift} 
In items 6 and 7 above, we noted that lifting polygons could go wrong when closures of lifted polygons 
have self-intersections on the boundary, i.e., the closure is not homeomorphic to a closed disc. 
These degeneracies can be detected by \emph{crossover transformations} (side-pairings) and \emph{sector labelling sequences}, 
both of which we introduce and discuss in the next section (\ref{subsec-crossover-sectorlabelling}).
If an open lifted polygon has a closure that is not an embedded polygon, we shall call the closure a \emph{degenerate lift}.
Because of the $G$ action, all closures of lifted polygons have homeomorphic closures, and so they are all non-degenerate or all degenerate.

\begin{remark}\label{rk-avoid-degen}
Other than simplicity of the tiling, a specific reason for avoiding degenerate lifts is avoiding 
degeneracies in the associated  Cayley graphs, introduced in Section \ref{sec-Cayley}.
Moreover, the tilings without degeneracies may be directly used for computing the homology representations of $G$ on $H_1(S)$.  
Once a monodromy is chosen, the degeneracy properties of lifts depend only on the choice of $\mathcal{E}$. 
\end{remark}

\subsubsection{Ribbon graphs}\label{subsubsec-ribbon}
In order to analyse the degeneracies of lifted polygons and to analyze how the various lifted polygons fit together, 
we need to be a bit more precise in describing the boundary and its orientation.
We can do this by constructing and lifting ribbon graph tubular neighbourhoods of $\mathcal{E}$. 
The ribbon graph neighbourhood $\mathcal{R}_\epsilon$ of $\mathcal{E}$ is an open tubular neighbourhood of $\mathcal{E}$, constructed as follows. 
Each node of the cut-system will be covered by a small disc, and each arc will be covered by a narrow ribbon. 
In  Figure \ref{tikz-ribbon} we show a ribbon graph neighbourhood for $\mathcal{E}_1$, which in turn, is shown in Figure \ref{tikz-E1-4}.  
The discs are red and the ribbons are green to help distinguish them. 
The parameter $\epsilon$ is a measure of the width of the tubular neighbourhood, and as $\epsilon \rightarrow 0$, $R_\epsilon$ 
``converges'' to $\mathcal{E}$.

Lift the complement $\mathcal{R}_\epsilon-\mathcal{E}$ to $S$. Since $\mathcal{R}_\epsilon-\mathcal{E}$ is contained entirely within 
$T-\mathcal{E}$, the lift is a conformal homeomorphism of $\mathcal{R}_\epsilon-\mathcal{E}$ to its image in each open polygon $g\mathcal{P}^\circ$ in $S$:
\[
\mathcal{R}_\epsilon-\mathcal{E}\leftrightarrow \pi_G^{-1}(\mathcal{R}_\epsilon-\mathcal{E})\cap g\mathcal{P}^\circ.
\]
The lifted complement  $\pi_G^{-1}(\mathcal{R}_\epsilon-\mathcal{E})\cap g\mathcal{P}^\circ$ has $2k$ portions that are green ribbons and $2k$ red sectors. 
Each green portion sweeps out an open ribbon that runs parallel to one of two sides of a lifted edge. Each red portion is a open sector of a disc centered 
at an endpoint of a lifted edge. Because we imposed distinct tangents at the endpoints of edges 
and since only cone points are allowed to be terminal edges, then each red sector is a proper sector
(sector angle $\le 2\pi$) of the disc surrounding a lifted vertex. For each vertex $v \in \mathcal{E}$ there are $k_v$  red sectors.  
The red and green portions may be pieced together to get a ribbon $S^1\times (0,1)$ such that the circle factor 
$S^1$ has $2k$ distinguished points (the vertices) corresponding to the tips of the red sectors. The portions of $S^1$ 
between vertices correspond to $2k$ edges of $\widetilde{\mathcal{E}}$ in the closure of a lifted polygon.

\begin{remark}\label{rk-ribbonboundary}
Note that $\mathcal{P}_\epsilon = T-\mathcal{R}_\epsilon$  is an approximation (strong deformation retract) to the open polygon $T-\mathcal{E}$. 
The closed ribbon graph neighbourhood $\overline{R}_\epsilon$ and $\mathcal{P}_\epsilon$ satisfy: 
\begin{eqnarray*}
% \nonumber % Remove numbering (before each equation)
  \overline{\mathcal{R}_\epsilon} \cup \mathcal{P}_\epsilon &=& T,  \\
   \overline{\mathcal{R}_\epsilon} \cap \mathcal{P}_\epsilon &=& \partial \overline{\mathcal{R}_\epsilon}  = \partial \mathcal{P}_\epsilon.  
\end{eqnarray*}
The common boundary may be identified with the $S^1$ factor of $\mathcal{R}_\epsilon-\mathcal{E}$ discussed above.
Thus, we can describe the counterclockwise list of edges and vertices of the closed model of $\mathcal{P}$ by traversing 
the boundary of $\partial \mathcal{P}_\epsilon$ in a counterclockwise fashion  
or the boundary $\partial \overline{\mathcal{R}_\epsilon}$ in a clockwise fashion.
    
As described in Remark \ref{rk-homology-tree}, we have $H_1(\mathcal{R}_\epsilon) \simeq H_1(\mathcal{E}) \simeq H_1(T)$. 
In the genus zero case $\mathcal{R}_\epsilon$ will be a disk. 
Boundary behaviour can be easily determined from a visual examination of the ribbon graph neighbourhood. 
\end{remark}

%%%% tikzfigure ribbon
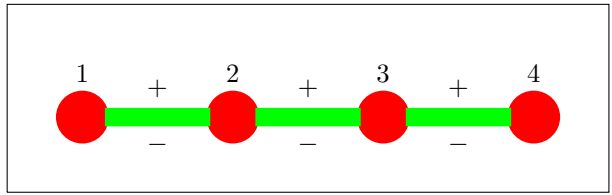
\begin{figure}[h!]
\begin{center}

\begin{tikzpicture}[scale=0.5, inner sep=1mm]

\node (A) at (-6,0) [shape=circle, fill=red, minimum size=20] [label=$1$] {};
\node (B) at (-2,0) [shape=circle, fill=red, minimum size=20] [label=$2$]{};
\node (C) at (2,0) [shape=circle,  fill=red, minimum size=20] [label=$3$]{};
\node (D) at (6,0) [shape=circle, fill=red, minimum size=20] [label=$4$]{};
\node (E1+) at (-4,0.1) [label=$+$] {};
\node (E2+) at (0,0.1) [label=$+$] {};
\node (E3+) at (4,0.1) [label=$+$] {};
\node (E1-) at (-4,-1.4) [label=$-$] {};
\node (E2-) at (0,-1.4) [label=$-$] {};
\node (E3-) at (4,-1.4) [label=$-$] {};

\draw [green,line width=7] (-5.4,0) to (-2.6,0);
\draw [green,line width=7] (-1.4,0) to (1.4,0);
\draw [green,line width=7] (2.6,0) to (5.4,0);

\draw [thin] (-8,-2) to (8,-2) to (8,3) to (-8,3) to (-8,-2) ;

\end{tikzpicture}

\end{center}
\caption{Ribbon graph for $\mathcal{E}_1$, with orientation markings}
\label{tikz-ribbon}
\end{figure}

\subsubsection{Degenerate lifts}

For each edge $e$ in $\mathcal{E}$, the edge $e$  bisects the corresponding green ribbon into two halves.  
Two different situations can occur. First case:  the closures of the lifts of the two halves do not meet 
except possibly joining at a red sector lift. In this case the lifted, nipped polygon $\widetilde{\mathcal{P}}^\star$ 
has two distinct open edges lying over $e$. Second case: the two lifted halves meet along a common lifted arc,\
and the closure of the lifted polygon is  glued to itself along the lifted arc. The second situation, 
which we call an \emph{edge collapse}, is a degeneracy which can be detected by a crossover calculation, see Proposition \ref{prop-edgecollapse}.

Another type of (mild) edge degeneracy is that two lifted polygons can meet in more than one edge. This frequently happens when we have cone points of order 2. In the dual Cayley graph (Section \ref{sec-Cayley}), this means that two nodes may be connected by multiple edges, which is not allowed in a standard group Cayley graph (Section \ref{subsubsec-groupCayley1}). Multiple intersections can also be detected by crossover calculations.  See Proposition \ref{prop-multiedge} and Corollary \ref{cor-XTorder2}.

Even if there is no edge collapse, we can have a \emph{vertex collapse} when the boundary has vertices identified, 
namely, several lifts of a single red sector have a common sector vertex.  
As with edge collapses, a vertex collapse can be detected using a series of crossover calculations. 
The details are given in Section \ref{subsubsec-sectorlabelling}.

\subsection{Crossover transformations and sector labelling}\label{subsec-crossover-sectorlabelling}
As previously discussed, we may label the open polygons in $S$ by the elements of $G$, namely $g \leftrightarrow g\mathcal{P}^\circ$.
If a base point $v_0 \in T-\mathcal{E}$ has been chosen, then the polygons may also be identified by lifts of the basepoint, 
namely,
\[
g \tilde{v}_0 = \pi^{-1}_G(v_0)\cap g\mathcal{P}^\circ.
\] 
We want to know how the labelling changes as we cross an edge or make a circuit around a vertex in $S$.  
The labeling changes can be computed from the structure of $\mathcal{E}$, the  monodromy $\xi=\xi_{\tilde{v}_0}$,  
and the corresponding generating vector $\mathcal{V}$. We will employ \emph{crossover transformations} and 
\emph{sector labelling sequences}, discussed next. 

\subsubsection{Crossover transformations}\label{subsubsec-crossover1}
For each arc or loop $e$ in $\mathcal{E}$ we construct a \emph{crossover loop}, i.e., a simple loop $\delta_e$ in $T^\circ$ 
based at the interior point $v_0$; that crosses $e$ transversally, exactly once, 
at a point $v_e$ in the interior of $e$; and that does not intersect any other arc or vertex of 
$\mathcal{E}$. The loop shall be used to determine label changes and construct side-pairing transformations. 
The crossover loop is unique  up to homotopy and orientation.
Examples of such loops are given by the red curves and their inverses in 
Figures \ref{tikz-XTloops1} and \ref{tikz-XTloops2}, where  $v_0=\infty$. 

A crossover loop may be constructed as follows. Find a small, disk-like neighbourhood  $D_e$ of $v_e$
which does not meet any part of $\mathcal{E}$, except the interior of $e$, and is bisected by $e$. 
Construct a small bisector $b_e$ of $D_e$ that crosses $e$ transversally, exactly once, at $v_e$. 
Let $U_e$ be the tubular neighbourhood of $e$ defined by the ribbon graph.
The open set $T-\mathcal{E}$ meets $U_e$ in two halves $U_e^+$ and $U_e^-$. 
(How to determine $+/-$ is discussed later in Section \ref{subsubsec-e-orientation}).  
We will assume that $D_e \subset U_e$ and that  $b_e$ meets the boundary of $D_e$ (but not $e$) in the points
$v^+ \in U_e^+$ and  $v^- \in U_e^-$.  Now construct a concatenation of paths: from $v_0$ to $v^+$ 
within the open polygon $T-\mathcal{E}$, then along the bisector from 
$v^+$  to $v^-$, and then back to  $v_0$ within the open polygon $T-\mathcal{E}$. 
We may assume that $\delta_e$ has no self intersections except at the base point. 
If we want to be specific about the orientation of $\delta_e$ we set 
\[
 \delta^+_e=\delta_e, \text{   }  \delta^-_e=(\delta_e)^{-1}.
\]

\begin{definition}
Let all notation be as above and let  $\mathcal{C} = \{\delta_e : e \in \mathcal{E}\}$ be the set of geometric loops defined by crossover paths.
Then $\mathcal{C}$, considered as an undirected graph on $T$,  is called the \emph{dual graph} to $\mathcal{E}$  or the \emph{quotient Cayley graph} with respect to $\mathcal{E}$.  
\end{definition}

From our lifting discussion, we see that the lift $\tilde{\delta_e}$ satisfies
\begin{equation}\label{eq-XTdef1}
\tilde{\delta_e}(1) = \xi(\delta_e)\cdot\tilde{v}_0.
\end{equation}
Moving along the lift $\tilde{\delta}_e$, we first travel within $\mathcal{P}^\circ$, 
cross an edge $\tilde{e} \in \partial\mathcal{P}^\circ $ lying over $e$,
and then travel within the open polygon $\xi(\delta_e)\mathcal{P}^\circ$ to end up at the lift   
$\xi(\delta_e)\tilde{v}_0$ of the base point $v_0$. We define the \emph{crossover transformation} 
\begin{equation}\label{eq-XTdef2}
\tau_e =\xi(\delta_e) 
\end{equation}
as we ``cross over'' the lifted edge  $\tilde{e}$ from $\mathcal{P}^\circ$ to $\tau_e \mathcal{P}^\circ$.  
If we use the opposite orientation on $e$ then we use  $\delta_e^{-1}$ 
to cross over the ``other lift'' $\tilde{e}\in \partial\mathcal{P}^\circ$, lying over $e$,
yielding the crossover transformation $\tau_e^{-1}$. 
Note that if  $\tau_e$ is trivial, then the ``other'' lift will be the same and we have an edge collapse.

If we start within another polygon $g\mathcal{P}^\circ$ we simply 
transport all the lifted items by $g$. Our lifts now start at $g\tilde{v}_0$ and our crossover transformation
is $g\tau_eg^{-1}$, according to our previous discussion on monodromies, see equation \eqref{eq-conjlift}. 
We cross over from $g\mathcal{P}^\circ$ to $(g\tau_e g^{-1}g)\mathcal{P}^\circ=(g\tau_e)\mathcal{P}^\circ$. 
Since $\mathcal{E}$ has $k$ edges with two orientations each, each lifted open polygon has at most $2k$ crossover 
transformations and is bounded by edges from at most $2k$ other polygons. 

\begin{definition}\label{def-crosoverseq}
  Assume we have some ordering $(e_1,\ldots,e_{2k})$ of the $2k$ \emph{oriented edges} $e_j$ of $\mathcal{E}$. 
Then, $XTS=(\tau_{e_1},\ldots, \tau_{e_{2k}})$ is called the \emph{crossover transformation sequence}, with respect to this ordering.
\end{definition}

\begin{remark}\label{rk-standard edge order}
Unless there is a good reason otherwise, the standard ordering of the oriented edges of $\mathcal{E}$ 
is that induced by a counterclockwise circuit about the open polygon $T-\mathcal{E}$, starting at a specified vertex. 
See Remark \ref{rk-ribbonboundary} about computing the order from the boundary of the ribbon graph.
\end{remark}

Using the crossover sequence, we can now characterize edge collapses and when two polygons closures meet along multiple edges.

\begin{proposition}\label{prop-edgecollapse}
Let all notation be as above, and let $e$ be an edge of $\mathcal{E}$. Then there is an edge collapse at every 
edge lying over $e$ if and only if $\tau_e$ is trivial.
\end{proposition}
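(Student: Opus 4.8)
The plan is to unwind the definitions of the crossover transformation $\tau_e$ and of an edge collapse, and to show each side of the biconditional implies the other by tracking the two lifts of $e$ that bound the distinguished open polygon $\mathcal{P}^\circ$. First I would recall, from the construction in Section~\ref{subsubsec-crossover1}, that the oriented crossover loop $\delta_e$ crosses $e$ exactly once at $v_e$, and that its lift $\tilde{\delta}_e$ starting at $\tilde v_0$ exits $\mathcal{P}^\circ$ through an edge $\tilde e^+$ lying over $e$, entering the polygon $\tau_e\mathcal{P}^\circ$, while $\delta_e^{-1}$ lifts to a path exiting $\mathcal{P}^\circ$ through the ``other'' edge $\tilde e^-$ over $e$ on $\partial\mathcal{P}^\circ$ into $\tau_e^{-1}\mathcal{P}^\circ$. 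So $\tilde e^+$ separates $\mathcal{P}^\circ$ from $\tau_e\mathcal{P}^\circ$ and $\tilde e^-$ separates $\mathcal{P}^\circ$ from $\tau_e^{-1}\mathcal{P}^\circ$, with $\tilde e^+$ and $\tilde e^-$ being the two halves into which $e$ bisects its green ribbon, lifted to $S$.

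Next I would argue the direction ($\tau_e$ trivial $\Rightarrow$ edge collapse). If $\tau_e = 1$, then both $\tilde e^+$ and $\tilde e^-$ separate $\mathcal{P}^\circ$ from $\mathcal{P}^\circ$ itself; equivalently, both the ``outgoing'' and ``return'' portions of the lifted crossover path lie in the same open polygon $\mathcal{P}^\circ$. Using the ribbon-graph picture of Section~\ref{subsubsec-ribbon}, the lifts of the two ribbon halves adjacent to $e$ both lie in $\mathcal{P}^\circ$ and therefore their closures meet along a common lifted arc over $e$; this is precisely the ``second case'' of the edge dichotomy in Section on degenerate lifts, i.e.\ an edge collapse. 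Because $G$ permutes the polygons simply transitively and acts on the edges over $e$ by $g\tau_e g^{-1}$ for the polygon $g\mathcal{P}^\circ$ (equation~\eqref{eq-conjlift}), triviality of $\tau_e$ is conjugation-invariant, so the collapse occurs at \emph{every} edge lying over $e$, not just those on $\partial\mathcal{P}^\circ$.

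For the converse (edge collapse $\Rightarrow$ $\tau_e$ trivial), suppose some edge $\tilde e$ over $e$ is a collapse: the closure of the lifted polygon is glued to itself along $\tilde e$, meaning a single open lifted polygon $g\mathcal{P}^\circ$ lies on both sides of $\tilde e$. Transporting by $g^{-1}$, we may assume $\mathcal{P}^\circ$ lies on both sides of an edge over $e$ in its boundary; then the lifted crossover path $\tilde\delta_e$, which by construction exits $\mathcal{P}^\circ$ across that edge, must re-enter $\mathcal{P}^\circ$, so $\tilde\delta_e(1) \in \mathcal{P}^\circ$, forcing $\xi(\delta_e)\tilde v_0 = \tilde v_0$. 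Since the $G$-action on polygons is simply transitive (equivalently free on the lifts $g\tilde v_0$ of the basepoint), this gives $\tau_e = \xi(\delta_e) = 1$. I would also note the remark already made after equation~\eqref{eq-XTdef2}: if $\tau_e$ is trivial the ``other'' lift coincides with the first, which is the geometric content of the collapse — this essentially closes the loop between the two formulations.

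The main obstacle is the bookkeeping that the two distinct edges $\tilde e^+,\tilde e^-$ over $e$ in $\partial\mathcal{P}^\circ$ are genuinely the two sides of the green ribbon over $e$, and that ``$\mathcal{P}^\circ$ on both sides of a lifted edge'' is equivalent to $\tilde\delta_e$ returning to $\mathcal{P}^\circ$ — this needs the ribbon-graph neighbourhood and the fact that $\delta_e$ crosses $e$ exactly once, avoiding any hidden subtlety about the crossover path possibly running along $\mathcal{E}$ or around a vertex. Everything else (conjugation-equivariance of the collapse, simple transitivity giving freeness on basepoint lifts) is routine given the structure of $\pi_G : S^\circ \to T^\circ$ as a regular cover established earlier.
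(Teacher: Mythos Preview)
Your proof is correct and follows exactly the approach implicit in the paper. In fact, the paper does not give a separate proof of this proposition at all: it is stated immediately after the discussion following equation~\eqref{eq-XTdef2}, where the authors already remark that ``if $\tau_e$ is trivial, then the `other' lift will be the same and we have an edge collapse,'' and treat the full biconditional as evident from the definitions. Your write-up simply makes explicit the two directions and the $G$-equivariance needed to pass from one edge over $e$ to all of them, which is precisely the unwinding the paper leaves to the reader.
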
 

\begin{proposition}\label{prop-multiedge}
Let all notation be as above, let $g\mathcal{P}^\circ$ and  $h\mathcal{P}^\circ$ be any two distinct lifted open polygons. 
Then the closures of $g\mathcal{P}^\circ$ and  $h\mathcal{P}^\circ$  meet in at least one edge if and only if $g^{-1}h$
is in the crossover sequence $(\tau_{e_1},\ldots, \tau_{e_{2k}})$. 
The number of  edges in the intersection equals the number of repetitions of $g^{-1}h$ in the crossover sequence.
\end{proposition}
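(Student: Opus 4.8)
The plan is to work locally along a single edge, using the ribbon graph structure and the crossover transformations already set up. The key observation is that, by item 6 of Section \ref{subsec-tiling} and the ribbon graph discussion, the boundary $\partial \widetilde{\mathcal{P}^\circ}$ of a lifted open polygon consists of $2k$ open edges, one for each oriented edge $e_j$ of $\mathcal{E}$, and each such boundary edge $\tilde e_j$ separates $\mathcal{P}^\circ$ from exactly one neighbouring open polygon. First I would show that crossing the boundary edge $\tilde e_j$ from $\mathcal{P}^\circ$ lands us in $\tau_{e_j}\mathcal{P}^\circ$: this is precisely the content of equations \eqref{eq-XTdef1} and \eqref{eq-XTdef2}, since the lifted crossover loop $\tilde\delta_{e_j}$ starts in $\mathcal{P}^\circ$, crosses $\tilde e_j$ transversally once, and ends at $\xi(\delta_{e_j})\tilde v_0 = \tau_{e_j}\tilde v_0$ inside $\tau_{e_j}\mathcal{P}^\circ$. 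Hence the open polygons sharing a boundary edge with $\mathcal{P}^\circ$ are exactly $\{\tau_{e_1}\mathcal{P}^\circ,\ldots,\tau_{e_{2k}}\mathcal{P}^\circ\}$ (with multiplicity), so the closures of $\mathcal{P}^\circ$ and $h\mathcal{P}^\circ$ meet in an edge if and only if $h$ appears in the list $(\tau_{e_1},\ldots,\tau_{e_{2k}})$, and the number of shared edges is the number of repetitions of $h$ in that list.

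Next I would transport this statement from the base polygon $\mathcal{P}^\circ$ to an arbitrary polygon $g\mathcal{P}^\circ$. Applying the deck transformation $\epsilon(g)$ carries $\mathcal{P}^\circ$ to $g\mathcal{P}^\circ$ and carries the boundary edge separating $\mathcal{P}^\circ$ from $\tau_{e_j}\mathcal{P}^\circ$ to the boundary edge separating $g\mathcal{P}^\circ$ from $g\tau_{e_j}\mathcal{P}^\circ$. Thus the closures of $g\mathcal{P}^\circ$ and $h\mathcal{P}^\circ$ meet in an edge if and only if $h \in \{g\tau_{e_1}\mathcal{P}^\circ,\ldots\}$ as a set of polygons, i.e.\ $g^{-1}h$ is one of the $\tau_{e_j}$, and the number of common edges is the number of $j$ with $\tau_{e_j} = g^{-1}h$. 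This is exactly what is asserted, using that the $G$-action on open polygons is simply transitive so that $g\tau_{e_j}\mathcal{P}^\circ = h\mathcal{P}^\circ$ is equivalent to $g^{-1}h = \tau_{e_j}$ as group elements.

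The one genuine subtlety — and the step I expect to require the most care — is the claim that each boundary edge of $\widetilde{\mathcal{P}^\circ}$ separates $\mathcal{P}^\circ$ from a \emph{well-defined} single neighbouring open polygon, and that the crossover loop $\delta_{e_j}$ actually detects \emph{that} transition rather than crossing into the wrong sheet. This is where the ribbon graph is essential: the green ribbon around $e$ meets $T-\mathcal{E}$ in the two halves $U_e^+$ and $U_e^-$, and the two oriented edges $e_j$ over the unoriented edge $e$ correspond to lifting these two halves; the lifted bisector $\tilde b_e \subset \tilde\delta_{e_j}$ stays inside one connected component of $\pi_G^{-1}(U_e \setminus \mathcal{E})$ on each side of $\tilde e_j$, so it genuinely crosses from $\mathcal{P}^\circ$ into the unique polygon abutting that particular lifted edge. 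I would also need to note that distinct oriented edges can have the same crossover transformation — this is precisely the multi-edge phenomenon — and that an edge collapse (Proposition \ref{prop-edgecollapse}, $\tau_e$ trivial) is the degenerate case excluded by the hypothesis $g\mathcal{P}^\circ \ne h\mathcal{P}^\circ$, so the counting of repetitions over the remaining $2k$ oriented edges is unambiguous. Once the local ribbon picture is pinned down, the rest is bookkeeping with the simply transitive $G$-action.
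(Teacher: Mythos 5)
Your proposal is correct and follows exactly the route the paper intends: the paper states Proposition~\ref{prop-multiedge} without a separate proof, relying on the crossover discussion of Section~\ref{subsubsec-crossover1} (crossing the lifted edge $\tilde e_j$ of $\partial\mathcal{P}$ lands in $\tau_{e_j}\mathcal{P}^\circ$, and translation by $g$ gives the passage from $g\mathcal{P}^\circ$ to $g\tau_{e_j}\mathcal{P}^\circ$), which is precisely what you spell out, together with simple transitivity to convert polygon labels into the condition $g^{-1}h=\tau_{e_j}$. The only minor imprecision is your remark that edge collapses are ``excluded by the hypothesis'': more accurately, collapsed edges correspond to trivial entries $\tau_{e_j}=1$ in the sequence, which can never equal $g^{-1}h$ since $g\neq h$, so they simply do not contribute to the count.
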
 

\begin{corollary}\label{cor-XTorder2}
If the crossover sequence contains an involution, then every polygon meets at least one other polygon in two or more edges. 
\end{corollary}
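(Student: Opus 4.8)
The plan is to deduce this immediately from Proposition~\ref{prop-multiedge}, once I observe that an involution occurring in the crossover sequence must in fact occur there at least twice. The structural input is the orientation symmetry built into the definition of $\tau_e$ around equation~\eqref{eq-XTdef2}: reversing the orientation of an edge $e$ replaces the crossover loop $\delta_e$ by $\delta_e^{-1}$, and hence replaces $\tau_e$ by $\tau_e^{-1}$. Since the crossover transformation sequence $XTS=(\tau_{e_1},\dots,\tau_{e_{2k}})$ of Definition~\ref{def-crosoverseq} is indexed by \emph{all} $2k$ oriented edges, for each unoriented edge both $\tau_e$ and $\tau_e^{-1}$ appear among its entries, and in two distinct positions unless $\tau_e$ is trivial, i.e. unless there is an edge collapse, by Proposition~\ref{prop-edgecollapse}.

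First I would pick an involution $\tau$ occurring in $XTS$, so that $\tau^2=1$ and $\tau\ne 1$; write $\tau=\tau_{e_j}$ for the oriented edge $e_j$. Let $j'$ be the index carrying the reversed orientation $e_j^{-1}$ of the same unoriented edge. Because $\tau\ne 1$ there is no edge collapse there, so $j'\ne j$, and $\tau_{e_{j'}}=\tau_{e_j}^{-1}=\tau^{-1}=\tau$. Hence $\tau$ appears in $XTS$ with multiplicity at least $2$.

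Next I would fix an arbitrary lifted open polygon $g\mathcal{P}^\circ$ and put $h=g\tau$. Since $\tau\ne 1$ and $G$ permutes the open polygons simply transitively (item~1 of Section~\ref{subsec-tiling}), the polygons $g\mathcal{P}^\circ$ and $h\mathcal{P}^\circ$ are distinct, and $g^{-1}h=\tau$ has multiplicity at least $2$ in the crossover sequence. Proposition~\ref{prop-multiedge} then gives that the closures of $g\mathcal{P}^\circ$ and $h\mathcal{P}^\circ$ meet in a number of edges equal to that multiplicity, hence in two or more edges. As $g$ was arbitrary, every polygon meets at least one other polygon along two or more edges, which is the assertion.

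There is essentially no obstacle here beyond Proposition~\ref{prop-multiedge}; the only point that needs a moment's care is the bookkeeping that the two orientations of an edge contribute $\tau_e$ and $\tau_e^{-1}$ as two genuinely distinct entries of the length-$2k$ sequence. This uses that $\tau$, being an involution, is nontrivial, so the edge in question does not collapse. The same remark also covers the mildly degenerate case in which the relevant edge is a loop: its two orientations are still distinct oriented edges and still contribute $\tau_e$ and $\tau_e^{-1}$ separately, so the argument is unaffected.
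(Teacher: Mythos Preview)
Your proof is correct and is precisely the intended deduction from Proposition~\ref{prop-multiedge}; the paper states the corollary without proof as an immediate consequence. One small simplification: you do not need to invoke Proposition~\ref{prop-edgecollapse} to get $j'\ne j$, since by Definition~\ref{def-crosoverseq} the sequence is indexed by all $2k$ oriented edges, and the two orientations of an unoriented edge are always distinct entries of that index set regardless of the value of $\tau_e$.
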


\subsubsection{Side-pairing transformations}\label{subsubsec-sidepairing}
By construction, the closures of $g\mathcal{P}^\circ$ and   $g\tau_e\mathcal{P}^\circ$ contain
a lift $\tilde{e}$ of $e$ in their intersection. Therefore, $\sigma_{\tilde{e}}=g\tau_e^{-1}g^{-1}$ maps 
the lift $\tilde{e}$ to a possibly different lift ${\tilde{e}}^\prime$ contained in $\partial(g\mathcal{P}^\circ)$, and lying over $e$. 
Such a map is called a side-pairing transformation. The sides are different if and only if $\sigma_{\tilde{e}}$ is not trivial.
In the nontrivial case, exactly two sides are paired, namely the two edges lying over $e$. 
The inverse $\sigma^{-1}_{\tilde{e}}$ pairs  ${\tilde{e}}^\prime$ with $\tilde{e}$.
Because $G$ acts freely on the lifted edges, $\sigma_{\tilde{e}}$ is uniquely determined. 
The side-pairing transformations may be used to construct the quotient $T$, including conformal structure and cone point structure,
from the closure of the open polygon $g\mathcal{P}^\circ$.  
In the distinguished polygon,   $\mathcal{P}^\circ$,   $\sigma_{\tilde{e}}= \tau^{-1}_e$.
\vskip 1pt

{Note: This ``construction'' is really just the Poincar\'{e} polygon theorem, adjusted for non-geodesic sides. 
Some care is required at the cone points of $T$. A good reference for the Poincar\'{e} polygon theorem is \cite{Mask}.

The details of the side pairing transformation,  $\sigma_{\tilde{e}}$, merit special attention when it is an involution. 
The situation is summarized in Corollary \ref{cor-XTorder2}. 
The boundary $\partial\mathcal{P}$, containing  $\tilde{e}$, also contains the paired edge
${\tilde{e}\,}^\prime=\tau_e^{-1}\cdot \tilde{e}=\tau_e\cdot\tilde{e}$, and it is different from $\tilde{e}$. 
Thus, both  $\mathcal{P}$ and  $\tau_e\mathcal{P}$ contain $\tilde{e}$ and ${\tilde{e}\,}^\prime$ in their boundaries. 
The two polygons can be glued together along their common edges $\tilde{e}$ and  ${\tilde{e}\,}^\prime$.
If $\tilde{e}$ and  ${\tilde{e}\,}^\prime$ are disjoint then we can build a cylinder within $\mathcal{P}\cup \tau_e\mathcal{P}$, 
by transporting $\tilde{e}$ along the lift of $\delta_e^2$, a closed loop contained within $\mathcal{P}\cup \tau_e\mathcal{P}$. 

Next, suppose that $\tilde{e}$ and  ${\tilde{e}\,}^\prime$ meet at an end point $v \in S$. 
If the endpoint is a white node, then, since $\pi_G$ is a local homeomorphism at $v$, there cannot
be two separate lifts of $e$, a contradiction. So now suppose that the common end point $v$ lies over a cone point. 
Lift a disk-like neighbourhood of $\overline{v}=\pi_G(v)$ to neighbourhood  $V$ of $v$. 
The set $V \cap (\mathcal{P}\cup \tau_e\mathcal{P})$ contains a cone with two open sectors, namely 
$V \cap \mathcal{P}^\circ$ and  $V \cap \tau_e\mathcal{P}^\circ$. 
When we add in $V \cap \tilde{e}$ and ${V \cap \tilde{e}\,}^\prime$ we get a cone, 
which must comprise the entirety of $V$. Alternatively, we know that 
$\partial V$ is a circle, but that $\partial V \cap (\mathcal{P}\cup \tau_e\mathcal{P})$ 
is a closed subset which is also a circle, a contradiction, unless $V = V \cap (\mathcal{P}\cup \tau_e\mathcal{P})$. 
Therefore, a neighbourhood of the cone point looks like the open unit disc, 
with the open upper (lower) half disc corresponding to points of 
$V \cap \mathcal{P}^\circ$ ($V \cap \tau_{\tilde{e}}\mathcal{P}^\circ$, respectively),
and the intervals $(-1,0]$ and $[0,1)$ correspond to $V \cap \tilde{e}$ and ${V \cap \tilde{e}\,}^\prime$. 
The transformation $\tau_e$ corresponds to a half turn about the origin, and generates the $G$-stabilizer of $v$. 

\subsubsection{Determining orientation}\label{subsubsec-e-orientation} 
When there is no edge collapse, the nipped, lifted polygon is locally a closed half space 
at each point of the interior of a lifted edge $\tilde{e}$. This induces an orientation on the lifted edges 
in the standard way of differential geometry. The orientation may be used to place an orientation on the crossover loops. 
However, we will use the ribbon graph to simplify the determination of the orientation of the crossover loops in equations 
\eqref{eq-XTdef1} and \eqref{eq-XTdef2}.

In the ribbon graph neighbourhood, the tubular neighbourhoods of the edges have two boundary components. 
Starting at an agreed upon vertex, move along the boundary of the ribbon graph in the clockwise direction. For each edge, $e$, 
we will travel through the two boundary components defined by the edge, separated from each other in the sequence, 
except at terminal endpoints of $\mathcal{E}$. The first boundary component we encounter we  mark with a $+$ 
and the second with a $-$.  After all the marking is complete, each edge will have a $+$ sign on one side and 
a $-$ sign on the other side. We denote the corresponding edges by $\tilde{e}^+$ and $\tilde{e}^-$. 
See Figure \ref{tikz-ribbon} for an illustration. 

Now we need to determine the orientation of the crossover loops $\delta_e$. The loop $\delta^+_e$, crosses over  
$\tilde{e}^+$. In the construction of the crossover transformation, the loop $\tilde{e}^+$ travels from within the polygon, 
across $\tilde{e}^+$, and into the adjoining polygon. 
Therefore on the ribbon graph we chose an orientation of $\delta_e$ so that it crosses from the $+$ to the $-$ side of the edge $e$. For  $\tilde{e}^-$ we must select the orientation of $\delta_e$ so that it crosses from  the $-$ side to the $+$ side of the edge $e$.

\subsubsection{Sector labelling sequences}\label{subsubsec-sectorlabelling}
Let us investigate if there are any vertex collapses, which, fortunately, can be determined by crossover transformations.
Let $\tilde{v}$ be a lift of a vertex $v \in \mathcal{E}$,  of valence $k_v$.  
Let $D$ be a small $G_{\tilde{v}}$-equivariant disc surrounding $\tilde{v}$ in $S$. Starting at a selected sector labelled with $h \in G$,  
let us move around $\tilde{v}$, along $\partial D$, in a counterclockwise direction. 
This direction is consistent with the direction of rotation of $c_v$, the given generator of the stabilizer $G_{\tilde{v}}$.
If $v$ is a white node then $c_v=1$ and $m_v=o(c_v) = 1$.
As we move around the vertex $\tilde{v}$ we will encounter, in order, the lifts 
$\tilde{e}_1,\tilde{e}_2,\ldots,\tilde{e}_{k_v},\ldots$ with $m_v$ repetitions. 
The intersection of these edges with the disc $D$ will be called \emph{spokes}.
The sequence $e_1,\ldots,e_{k_v}$ is a temporary labelling of the edges, of $\mathcal{E}$ issuing from $v$, 
in counterclockwise order and taking edge orientation into account.
As we move from the first sector to the second sector, our label changes from  $h$ to $h\tau_{e_1}$. 
Continuing this process we get a sequence, of $k_v\times m_v$ labels
\begin{equation}\label{eq-sectorseq}
h,h\tau_{e_1}, h\tau_{e_1}\tau_{e_2},h\tau_{e_1}\tau_{e_2}\tau_{e_3}, \ldots .
\end{equation}
These labels are the labels of the sectors, starting in the chosen sector and moving in a small counterclockwise circuit about $\tilde{v}$.  
Each new term in the sequence is obtained by right multiplying the preceding term by some crossover transformation $\tau_e$. Let us formalize this in a  
definition.

\begin{definition}
Let $\tilde{v} \in \widetilde{\mathcal{E}}$ be a vertex lying over a vertex $v$ of valency $k_v$ in  $\mathcal{E}$, 
let $m_v$ be the order of the stabilizer $G_{\tilde{v}}$, and let other notation be as above. Let $h \in G$, be the label for a chosen sector at $\tilde{v}$. 
Then the sequence of  $k_v\times m_v$ group elements given in \eqref{eq-sectorseq} is called the \emph{sector labelling sequence}, 
determined by the chosen sector. 
\end{definition}

By the construction  of the sector labelling sequence \eqref{eq-sectorseq}, we have:
\begin{proposition}
Let all notation be as above. Then, there is a vertex collapse at the vertex $\tilde{v}$ lying over $v$ in $\mathcal{E}$ if and only if there are repetitions in the sector labelling sequence \eqref{eq-sectorseq}.  
\end{proposition}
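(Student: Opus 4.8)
The plan is to recognize both halves of the equivalence as restatements of a single count: for which $g\in G$ do two or more of the sectors meeting at $\tilde v$ lie in the \emph{same} lifted polygon $g\mathcal P^\circ$? To set this up I would work with a small $G_{\tilde v}$-invariant disc $D$ about the given $\tilde v$, chosen so that $D\cap\widetilde{\mathcal E}$ is $\tilde v$ together with $N:=k_v m_v$ spokes and $D-\widetilde{\mathcal E}$ is a disjoint union of $N$ open sectors. A counterclockwise circuit around $\tilde v$ meets each of these $N$ sectors exactly once, so the labels they carry are, in order, the $N$ entries $h,\ h\tau_{e_1},\ h\tau_{e_1}\tau_{e_2},\dots$ of the sector labelling sequence \eqref{eq-sectorseq}; sending a sector to its position in that list is a bijection from the sectors at $\tilde v$ to the terms of \eqref{eq-sectorseq}. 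Hence ``there is a repetition in the sector labelling sequence'' is literally ``two distinct sectors at $\tilde v$ carry the same label'', i.e. ``for some $g\in G$ at least two of the sectors at $\tilde v$ lie in $g\mathcal P^\circ$''.

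The core of the argument, and the step I expect to demand the most care, is a local dictionary between sectors at $\tilde v$ and boundary corners of lifted polygons. For $g\in G$ let $f_g:\mathcal P_{plane}\to g\mathcal P$ be the polygon model map from the list in Section~\ref{subsec-tiling}: it is a homeomorphism away from the vertices of the convex polygon $\mathcal P_{plane}$ and restricts to a homeomorphism of interiors onto the embedded open polygon $g\mathcal P^\circ$. I would show that, after shrinking $D$, $f_g$ induces a bijection between the vertices $w$ of $\mathcal P_{plane}$ with $f_g(w)=\tilde v$ and the sectors of $D-\widetilde{\mathcal E}$ bearing the label $g$. Indeed, each such $w$ has a small sector neighbourhood in $\mathcal P_{plane}$; its open part maps under $f_g$ into $g\mathcal P^\circ\cap D$, and being connected it lands in one sector at $\tilde v$; the two edges of $\mathcal P_{plane}$ at $w$ are then carried into the two spokes bounding that sector, so the neighbourhood fills the sector up to shrinking. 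Distinct vertices $w\ne w'$ over $\tilde v$ have disjoint neighbourhoods, hence — $f_g$ being injective on interiors — land in distinct sectors; conversely every sector at $\tilde v$ with label $g$ is obtained this way, since $\partial(g\mathcal P)$ has a corner at that sector. The delicate point is verifying that near $\tilde v$ the polygon $g\mathcal P$ is genuinely the union of finitely many honest sectors glued along the spokes, so that counting corners of $g\mathcal P$ over $\tilde v$ agrees with counting sectors at $\tilde v$ labelled $g$; this is exactly where the standing hypotheses on $\mathcal E$ (distinct tangents at vertices, only cone points as terminal vertices) and the $z\mapsto z^{m_v}$ normal form of $\pi_G$ at $\tilde v$ from Section~\ref{subsec-tiling} are used, and where the degenerate configurations (terminal vertices, simultaneous edge collapses) must be checked case by case.

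With the dictionary in place the proof closes quickly. Unwinding the definition, there is a vertex collapse at $\tilde v$ precisely when, for some $g\in G$, the identification map $f_g$ glues two or more distinct vertices of $\mathcal P_{plane}$ to the single point $\tilde v$; by the dictionary that happens exactly when some $g$ labels two or more sectors at $\tilde v$; and by the correspondence between those sectors and the entries of \eqref{eq-sectorseq} set up in the first paragraph, this is exactly the presence of a repeated term in the sector labelling sequence. Finally I would record the two robustness remarks that keep the statement well-posed: every step above is local at $\tilde v$, hence is unaffected by edge collapses at other edges of $\mathcal E$; and changing the starting sector cyclically permutes the list \eqref{eq-sectorseq}, while replacing $\tilde v$ by another lift $k\tilde v$ of $v$ multiplies every label on the left by $k$ (by \eqref{eq-conjlift}, applied to the crossover loops), so in either case the multiset of labels — and hence the presence or absence of a repetition, and therefore of a vertex collapse — is unchanged.
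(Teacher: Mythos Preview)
Your proposal is correct and follows the same underlying idea as the paper, which treats the proposition as immediate ``by the construction of the sector labelling sequence'' and gives no further argument. Your elaboration of the dictionary between boundary corners of $g\mathcal{P}$ over $\tilde v$ and sectors at $\tilde v$ labelled $g$ makes explicit exactly what the paper leaves implicit in the definitions of $sectors_g$ (equation~\eqref{eq-seclabel}) and of vertex collapse; the added care about degenerate configurations and the robustness remarks are sound but go well beyond what the paper supplies.
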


\section{Four cone points on a sphere}\label{sec-S4}
Given a monodromy $\xi$, and a tiling polygon $\mathcal{P}$, how do we determine
the quotient Cayley graph $\mathcal{C}$ and then the specific crossover transforms $\tau_e$.
Rather than giving a general discussion we will consider a small number of examples
with signatures  $(0;m_{1},m_{2},m_{3},m_{4})$, sufficient for our purposes.
We proceed with some examples of cut systems $\mathcal{E}$, then the polygons $\mathcal{P}$, 
and then the crossover transformations $\tau_e$.

\paragraph{Cut systems.}\label{subsubsec-cutsys}
For the signature $(0;m_{1},m_{2},m_{3},m_{4})$, it suffices that $\mathcal{E}$ is a tree. In Figure \ref{tikz-E1-4} we have several simple possibilities of trees with four branch points. In these tree diagrams, the black nodes are cone points and the white nodes are regular points. The black nodes are labeled 
$j \leftrightarrow z_j$. Typically, there is only one white node or none at all.

\begin{figure}[h!]
\begin{center}
\begin{tabular}
[c]{|c|c|}\hline

%%%% tikzpic E1
\begin{tikzpicture}[scale=0.5, inner sep=1mm]

\node (A) at (-3,0) [shape=circle, fill=black] [label=$1$] {};
\node (B) at (-1,0) [shape=circle, fill=black] [label=$2$]{};
\node (C) at (1,0) [shape=circle, fill=black] [label=$3$]{};
\node (D) at (3,0) [shape=circle, fill=black] [label=$4$]{};
\node (P) at (0,-1) [shape=circle, fill=white] {}; %positioning

\draw [thick] (A) to (B);
\draw [thick] (B) to (C);
\draw [thick] (C) to (D);

\end{tikzpicture}

&
%%%% tikzpic E2
\begin{tikzpicture}[scale=0.5, inner sep=1mm]

\node (A) at (-2,1) [shape=circle, fill=black] [label=$1$]{};
\node (B) at (-2,-1) [shape=circle, fill=black][label=$3$] {};
\node (C) at (0,0) [shape=circle, fill=black] [label=$4$]{};
\node (D) at (2,0) [shape=circle, fill=black] [label=$2$]{};
\node (P) at (0,1.2) [shape=circle, fill=white] {}; %positioning

\draw [thick] (A) to (C);
\draw [thick] (B) to (C);
\draw [thick] (C) to (D);

\end{tikzpicture}
\\ \hline

Cut System: $\mathcal{E}_1$
&
Cut System: $\mathcal{E}_2$
\\\hline

%%%% tikzpic E3
\begin{tikzpicture}[scale=0.5, inner sep=1mm]

\node (A) at (-2,1) [shape=circle, fill=black] [label=$1$] {};
\node (B) at (-2,-1) [shape=circle, fill=black]  [label=$4$] {};
\node (C) at (2,0) [shape=circle, fill=black]  [label=$2$] {};
\node (D) at (4,0) [shape=circle, fill=black]  [label=$3$] {};
\node (W) at (0,0) [shape=circle, fill=white] {}; 
\node (P) at (0,-2) [shape=circle, fill=white]  {}; % positioning

\draw [thick] (A) to (W);
\draw [thick] (B) to (W);
\draw [thick] (W) to (C);
\draw [thick] (C) to (D);
\filldraw[color=black, fill=white, thick] (W) circle (0.3);

\end{tikzpicture}

&
%%%% tikzpic E4
\begin{tikzpicture}[scale=0.5, inner sep=1mm]

\node (A) at (-2,-2) [shape=circle, fill=black]  [label=$4$] {};
\node (B) at (-2,2) [shape=circle, fill=black]  [label=$1$] {};
\node (C) at (2,2) [shape=circle, fill=black]  [label=$2$] {};
\node (D) at (2,-2) [shape=circle, fill=black]  [label=$3$] {};
\node (W) at (0,0) [shape=circle, fill=white] {};
\node (P) at (0,2.2) [shape=circle, fill=white] {}; %positioning

\draw [thick] (W) to (A);
\draw [thick] (W) to (B);
\draw [thick] (W) to (C);
\draw [thick] (W) to (D);
\filldraw[color=black, fill=white, thick] (W) circle (0.3);

\end{tikzpicture}

\\\hline
Cut System: $\mathcal{E}_3$
&
Cut System: $\mathcal{E}_4$
\\\hline
\end{tabular}
\end{center}
\caption{Various cut systems $\mathcal{E}_i$ on the sphere}
\label{tikz-E1-4}
\end{figure}
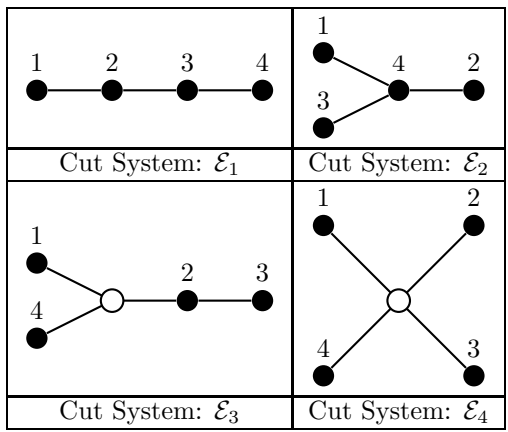

\paragraph{Tiling polygons.}\label{subsubsec-tilepoly}
Now we consider the polygons $\mathcal{P}_i$ polygons in Figure \ref{tikz-P1-4}, derived from the cut systems $\mathcal{E}_i$  in Figure \ref{tikz-E1-4}.  The polygons conceivably have a vertex collapse, we have only pictured the idealized case. In the sphere quotient case, a simple way to construct a plane topological model of the polygon $\mathcal{P}$, given a cut system $\mathcal{E}$, is to do the following. 
\begin{enumerate}
  \item Construct a suitable $2k$-gon in the plane, where $k$ is the number of arcs in the cut system. In the sphere quotient cases this is one less than the number of nodes.   
  \item Construct a ribbon graph neigbourhood $\mathcal{R}_\epsilon$ of the cut system $\mathcal{E}$, as in Figure \ref{tikz-ribbon} of Section \ref{subsubsec-ribbon}.
  \item Starting at the first node $z_1$, traverse the boundary of the ribbon graph in the clockwise direction, recording, in order, the colour and labels  of the nodes that we pass around. 
  \item Starting at a selected initial vertex on the polygon, moving in a \textit{counterclockwise} direction, replace the each vertex by a black node, or white node according to the clockwise order on the ribbon graph. Next, in a counterclockwise fashion, starting at the selected initial vertex label the black nodes with the labelling from the clockwise order on the ribbon graph.             
\end{enumerate}

\begin{remark}\label{rk-E2P} Note that the number of times a label occurs on the polygon is the valency of that vertex in the cut system graph. This also applies to the white node if there is one.
\end{remark}
\clearpage
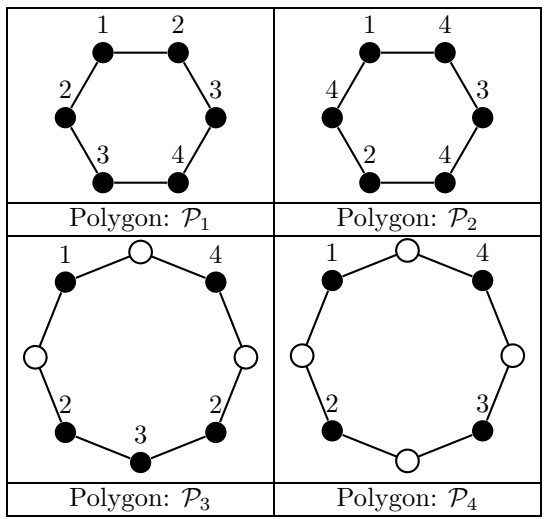
\begin{figure}[h!]
\begin{center}
\begin{tabular}
[c]{|c|c|}\hline

%%%% tikzpic P1
\begin{tikzpicture}[scale=0.5, inner sep=1mm]

\node (A) at (2,0) [shape=circle, fill=black] [label=$3$] {};
\node (B) at (1,1.73) [shape=circle, fill=black] [label=$2$]{};
\node (C) at (-1,1.73) [shape=circle, fill=black] [label=$1$] {};
\node (D) at (-2,0) [shape=circle, fill=black][label=$2$] {};
\node (E) at (-1,-1.73) [shape=circle, fill=black] [label=$3$]{};
\node (F) at (1,-1.73) [shape=circle, fill=black][label=$4$] {};

\draw [thick] (A) to (B);
\draw [thick] (B) to (C);
\draw [thick] (C) to (D);
\draw [thick] (D) to (E);
\draw [thick] (E) to (F);
\draw [thick] (F) to (A);

\end{tikzpicture}

&
%%%% tikzpic P2
\begin{tikzpicture}[scale=0.5, inner sep=1mm]

\node (A) at (2,0) [shape=circle, fill=black] [label=$3$] {};
\node (B) at (1,1.73) [shape=circle, fill=black] [label=$4$]{};
\node (C) at (-1,1.73) [shape=circle, fill=black] [label=$1$] {};
\node (D) at (-2,0) [shape=circle, fill=black][label=$4$] {};
\node (E) at (-1,-1.73) [shape=circle, fill=black] [label=$2$]{};
\node (F) at (1,-1.73) [shape=circle, fill=black][label=$4$] {};

\draw [thick] (A) to (B);
\draw [thick] (B) to (C);
\draw [thick] (C) to (D);
\draw [thick] (D) to (E);
\draw [thick] (E) to (F);
\draw [thick] (F) to (A);

\end{tikzpicture}
\\\hline

Polygon: $\mathcal{P}_1$
&
Polygon: $\mathcal{P}_2$
\\\hline

%%%% tikzpic P3
\begin{tikzpicture}[scale=0.5, inner sep=1mm]

\node (A) at (-2,-2) [shape=circle, fill=black] [label=$2$] {};
\node (B) at (-2,2) [shape=circle, fill=black] [label=$1$]{};
\node (C) at (2,2) [shape=circle, fill=black] [label=$4$] {};
\node (D) at (2,-2) [shape=circle, fill=black][label=$2$] {};

\node (E) at (-2.8,0) [shape=circle, fill=black] {};
\node (F) at (0,2.8) [shape=circle, fill=black] {};
\node (G) at (2.8,0) [shape=circle, fill=black] {};
\node (H) at (0,-2.8) [shape=circle, fill=black][label=$3$] {};

\draw [thick] (A) to (E);
\draw [thick] (E) to (B);
\draw [thick] (B) to (F);
\draw [thick] (F) to (C);
\draw [thick] (C) to (G);
\draw [thick] (G) to (D);
\draw [thick] (D) to (H);
\draw [thick] (H) to (A);

\filldraw[color=black, fill=white, thick](-2.8,0) circle (0.3);
\filldraw[color=black, fill=white, thick](2.8,0) circle (0.3);
\filldraw[color=black, fill=white, thick](0,2.8) circle (0.3);

\end{tikzpicture}

&
%%%% tikzpic P4
\begin{tikzpicture}[scale=0.5, inner sep=1mm]

\node (A) at (-2,-2) [shape=circle, fill=black] [label=$2$] {};
\node (B) at (-2,2) [shape=circle, fill=black] [label=$1$]  {};
\node (C) at (2,2) [shape=circle, fill=black] [label=$4$] {};
\node (D) at (2,-2) [shape=circle, fill=black] [label=$3$] {};

\node (E) at (-2.8,0) [shape=circle, fill=white] {};
\node (F) at (0,2.8) [shape=circle, fill=white] {};
\node (G) at (2.8,0) [shape=circle, fill=white] {};
\node (H) at (0,-2.8) [shape=circle, fill=white] {};

\draw [thick] (A) to (E);
\draw [thick] (E) to (B);
\draw [thick] (B) to (F);
\draw [thick] (F) to (C);
\draw [thick] (C) to (G);
\draw [thick] (G) to (D);
\draw [thick] (D) to (H);
\draw [thick] (H) to (A);

\filldraw[color=black, fill=white, thick](-2.8,0) circle (0.3);
\filldraw[color=black, fill=white, thick](0,-2.8) circle (0.3);
\filldraw[color=black, fill=white, thick](2.8,0) circle (0.3);
\filldraw[color=black, fill=white, thick](0,2.8) circle (0.3);

\end{tikzpicture}

\\\hline
Polygon: $\mathcal{P}_3$
&
Polygon: $\mathcal{P}_4$
\\\hline
\end{tabular}
\end{center}
\caption{Polygons $\mathcal{P}_i$ corresponding to $\mathcal{E}_i$ }
\label{tikz-P1-4}
\end{figure}

\subsection{Crossover calculations}\label{subsec-Xcalcs}
\subsubsection{Crossover calculation setup}\label{subsubsec-crossover2}
We are going the use setup described in Section \ref{subsubsec-crossover1}. 
We need to be able to write the loops in the quotient Cayley graph $\mathcal{C}$ in terms of the generators of $\Gamma$. 

To make this a bit simpler visually, we replace the base point,  $z_0$, by a small disc $D$ surrounding the base point. 
We will assume that $z_0=\infty$ and that all cone points are finite,  once $T$ has been identified with the Riemann sphere $\widehat{\mathbb{C}}$. 
In our rendering, $\partial D$, which is the boundary a small disc about $\infty$ will be a large circle in the plane centered at the origin. 
We also assume the large circle contains the cut system $\mathcal{E}$.
See Figures \ref{tikz-XTloops1} and \ref{tikz-XTloops2} for examples, and to illustrate the steps following.

Let  $\gamma_{1},\gamma_{2},\gamma_{3},\gamma_{4}$ be a standard generating set for $\Gamma$.
We ``groom'' the generators and make $D$ sufficiently small so that each initial ray $\gamma_j^+$ meets $\partial{D} $ in a single point
$\gamma_j^+ \cap \partial{D}$, and  similarly for the terminal ray $\gamma_j^-$.
In Figures \ref{tikz-XTloops1} and \ref{tikz-XTloops2} the $\gamma_j$ can be identified by noting they make a tight turn about the node $z_j$. 
The loops are coloured blue, unless they have been chosen for a crossover loop $\delta$, in which case they are coloured red.

As we traverse $\partial{D}$ in a \emph{counterclockwise} direction, considering $\partial{D}$ as the boundary of $D$, we encounter the ray intersections in this order, according to the standard construction of the $\gamma_j$:
\begin{equation}\label{eq-intersectionordering}
 \gamma_1^+ \cap \partial{D},\gamma_1^- \cap \partial{D},\ldots, \gamma_4^+ \cap \partial{D},\gamma_4^- \cap \partial{D}. 
\end{equation}
When $\partial{D}$ is traversed a \emph{clockwise }direction, considering  $\partial{D}$ as the boundary of $\widehat{\mathbb{C}}-D^\circ$, we get the same order of intersections.  We have also labelled the sides of the arcs in $\mathcal{E}$ with $+$ or $-$ as we did with the ribbon graph.  

%%%% tikzfigure XTloops E1
\begin{figure}[h!]

\begin{center}
\begin{tikzpicture}[scale=0.5, inner sep=1mm]

\node (A) at (-3,0) [shape=circle, fill=black] [label=$1$] {};
\node (B) at (-1,0) [shape=circle, fill=black] [label=$2$]{};
\node (C) at (1,0) [shape=circle, fill=black] [label=$3$]{};
\node (D) at (3,0) [shape=circle, fill=black] [label=$4$]{};
\node (P) at (0,-1) [shape=circle, fill=white] {}; %positioning
\node (E1+) at (-2,0.025) [label=$+$] {};
\node (E2+) at (-0.3,0.025) [label=$+$] {};
\node (E3+) at (2,0.025) [label=$+$] {};
\node (E1-) at (-2,-1.25) [label=$-$] {};
\node (E2-) at (-0.3,-1.25) [label=$-$] {};
\node (E3-) at (2,-1.25) [label=$-$] {};

\draw [thick] (A) to (B);
\draw [thick] (B) to (C);
\draw [thick] (C) to (D);

\draw[color=black,thin] (0,0) circle (5);
\draw [color=black, thin] (-6,-6) to (6,-6) to (6,6) to (-6,6) to (-6,-6) ;

\draw [color=red, thick,<-] (-5.5,0.4) to (-3,0.4);
\draw [color=red, thick, >-] (-5.5,-0.4) to (-3,-0.4);
\draw [color=red, thick] (-3,0.4) arc[start angle=90, delta angle = -180, radius = 0.4];

\draw [color=blue, thick,>-] (-1.4,5.5) to (-1.4,0);
\draw [color=blue, thick,<-] (-0.6,5.5) to (-0.6,0);
\draw [color=blue, thick] (-0.6,0) arc[start angle=0, delta angle = -180, radius = 0.4];

\draw [color=red, thick,>->] (0,-5.5) to (0,5.5);

\draw [color=blue, thick,>-] (0.6,5.5) to (0.6,0);
\draw [color=blue, thick,<-] (1.4,5.5) to (1.4,0);
\draw [color=blue, thick] (0.6,0) arc[start angle=180, delta angle = 180, radius = 0.4];

\draw [color=red, thick,>-] (5.5,0.4) to (3,0.4);
\draw [color=red, thick, <-] (5.5,-0.4) to (3,-0.4);
\draw [color=red, thick] (3,0.4) arc[start angle=90, delta angle = 180, radius = 0.4];

\end{tikzpicture}
\end{center}

\caption{Crossover loops (red) for $\mathcal{E}_1$ with orientation markings}
\label{tikz-XTloops1}

\end{figure}

%%%% tikzfigure XTloopsE2
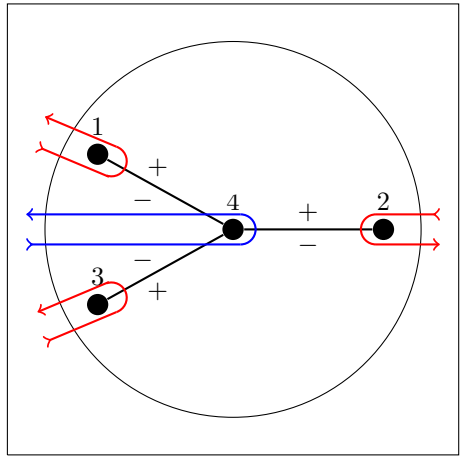
\begin{figure}[h!]

\begin{center}
\begin{tikzpicture}[scale=0.5, inner sep=1mm]

\node (A) at (-3.6,2) [shape=circle, fill=black] [label=$1$]{};
\node (B) at (-3.6,-2) [shape=circle, fill=black][label=$3$] {};
\node (C) at (0,0) [shape=circle, fill=black] [label=$4$]{};
\node (D) at (4,0) [shape=circle, fill=black] [label=$2$]{};
\node (P) at (0,1.2) [shape=circle, fill=white] {}; %positioning
\node (E1+) at (-2,1) [label=$+$] {};
\node (E2+) at (2,-0.2) [label=$+$] {};
\node (E3+) at (-2,-2.3) [label=$+$] {};
\node (E1-) at (-2.4,0.1) [label=$-$] {};
\node (E2-) at (2,-1.1)[label=$-$] {};
\node (E3-) at (-2.4,-1.5) [label=$-$] {};

\draw [thick] (A) to (C);
\draw [thick] (B) to (C);
\draw [thick] (C) to (D);

\draw[color=black,thin] (0,0) circle (5);
\draw [color=black, thin] (-6,-6) to (6,-6) to (6,6) to (-6,6) to (-6,-6) ;

\draw [color=blue, thick,<-] (-5.5,0.4) to (0.2,0.4);
\draw [color=blue, thick, >-] (-5.5,-0.4) to (0.2,-0.4);
\draw [color=blue, thick] (0.2,0.4) arc[start angle=90, delta angle = -180, radius = 0.4];

\draw [color=red, thick,<-] (-5,3) to (-3.1,2.2);
\draw [color=red, thick,>-] (-5.2,2.2) to (-3.3,1.4);
\draw [color=red, thick] (-3.1,2.2) arc[start angle=71, delta angle = -180, radius = 0.405];

\draw [color=red, thick,>-] (-5,-3) to (-3.1,-2.2);
\draw [color=red, thick,<-] (-5.2,-2.2) to (-3.3,-1.4);
\draw [color=red, thick] (-3.1,-2.2) arc[start angle=-71, delta angle = 180, radius = 0.405];

\draw [color=red, thick,>-] (5.5,0.4) to (3.8,0.4) ;
\draw [color=red, thick, <-](5.5,-0.4) to (3.8,-0.4) ;
\draw [color=red, thick] (3.8,0.4) arc[start angle=90, delta angle = 180, radius = 0.4];

\end{tikzpicture}
\end{center}

\caption{Crossover loops (red) for $\mathcal{E}_2$ with orientation markings}
\label{tikz-XTloops2}

\end{figure}

\subsubsection{Detailed crossover calculations for $\mathcal{E}_1$}\label{subsec-E1detail}
Using Figure \ref{tikz-XTloops1}, we can determine crossover transformations by orienting and lifting three loops  $\delta_1,\delta_2, \delta_3$ that we choose for $\mathcal{C}_1$.  The chosen crossover loops, plotted in red, are:
\[ 
\delta_1=\gamma_1,\  \delta_2 = \mathrm{ vertical\ red\ loop,\ and\ } \delta_3=\gamma_4.
\]

Using the big circle we can easily visualize homotopies. 
For instance $\gamma_1^- \cap \partial{D}$ and $\gamma_2^+ \cap \partial{D}$
have no intersection point between them and can be squeezed together, 
joined, and then the new intersection point moved down the diagram 
through the arc between $z_1$ and $z_2$, The new loop encircles both $z_1$ and $z_2$, 
This move corresponds to  moving the snippet $\gamma_1^-\gamma_2^+$ away from  $z_0$ 
and down the diagram, such that the homotopy extends to all of $\gamma_1\gamma_2$. 
Next, we can do a similar sliding operation on the new loop to match it up with $\delta_2$. So we have: 
\[ 
\delta_1=\gamma_1,\  \delta_2 = \gamma_1\gamma_2,\ \delta_3=\gamma_4
\]
and, using the generating vector  $(c_1,c_2,c_3,c_4)$,

 \begin{eqnarray*}
 \xi(\delta_1) &=& c_1 \\  
 \xi(\delta_2) &=& c_1c_2 \\
 \xi(\delta_3) &=& c_4
 \end{eqnarray*}

Our last step is to determine  quantities $\xi(\delta^+_i)$ and  $\xi(\delta^-_i)$, 
$i = 1,2,3$. Using the method in Section \ref{subsubsec-e-orientation} to determine orientation, 
we compute all the crossover transformations in Table 1. 
Let us explain the notation in the columns. In the first column $\tilde{e}_i$ on the 
left hand side of $\leftrightarrow$ is the open edge in $\mathcal{P}_1$  
in the order we encounter them in a counterclockwise circuit around $\partial\mathcal{P}_1$. 
On the right hand side of $\leftrightarrow$ the symbol $e_j^\pm$ 
indicates which edge, and which orientation is associated with $\tilde{e}_i$ via $\pi_G$. 
Now the subscripts indicate which edge in $\mathcal{E}$ is being selected, the numbering 
going $1,2,3$ as we sweep left to right in the diagram for $\mathcal{E}_1$. 
The middle columns give a formula for the oriented crossover loops as elements of $\Gamma$. 
The last column gives us the crossover transformation $\tau_{\tilde{e}} \in G$.

 \[%
 \label{table-4}
\begin{tabular}
[c]{c}%
\begin{tabular}
[c]{|l|l|l|}\hline
Edge $e$ & $\delta \in \mathcal{C}$ & $\tau_{\tilde{e}}$\\ \hline
$\tilde{e}_1\leftrightarrow e^+_1$ & $\delta^+_1=\gamma_1^{-1}$ & $c_1^{-1}$\\\hline
$\tilde{e}_2\leftrightarrow e^+_2$& $\delta^+_2=(\gamma_2\gamma_3)^{-1}$ &  $(c_1c_2)^{-1}$ \\\hline
$\tilde{e}_3\leftrightarrow e^+_3$ & $\delta^+_3=\gamma_4$ &  $c_4$ \\ \hline
$\tilde{e}_4\leftrightarrow e^-_3$ & $\delta_3^-=\gamma_4^{-1}$ &  $c_4^{-1}$\\ \hline
$\tilde{e}_5\leftrightarrow e^-_2$ & $\delta_2^-=\gamma_2\gamma_3$ & $c_1c_2$\\ \hline
$\tilde{e}_6\leftrightarrow e^-_1$& $\delta_1^-= \gamma_1$ & $c_1$\\ \hline
\end{tabular}
\ \\
\\
Table 1. Crossover transformations for $\mathcal{P}_1$
\end{tabular}
\
\]

\subsubsection{Crossover table for various cut systems}\label{subsubsec-TableSQ}
We finish up crossover calculations for the sphere quotient case with Table 3, showing the sequences
$e_j^\pm=\pi_G(\tilde{e}_i),\text{\ } i=1,\ldots,2k$ and the associated crossover transformations $\tau_{\tilde{e}_i}, i=1,\ldots,2k$. 
The $\tilde{e}_i$ are ordered by a counterclockwise circuit about a distinguished polygon (Figure \ref{tikz-P1-4})  
or, alternatively, a clockwise circuit around the ribbon graph neighbourhood 
(see examples in Figures \ref{tikz-ribbon}, \ref{tikz-XTloops1}, \ref{tikz-XTloops2}). 
To aid in associating the edges in the polygon boundary and those in the cut system, 
we note that the undirected edges $e_j$ in $\mathcal{E}$ can be determined by the set of endpoints in the cut system.  
In Table 2, where undirected edges are defined, we use the node number for the black nodes and $w$ for the white node.

\[\label{table-5}
\begin{tabular}[c]{c}
\begin{tabular}
[c]{|l|l|l|}\hline
$\mathcal{E}$    & $e_j$ undirected edges \\ \hline
$\mathcal{E}_1$  & $e_1=\left\{1,2\right\}$, $e_2=\left\{2,3\right\}$, $e_3=\left\{3,4\right\}$ \\ \hline
$\mathcal{E}_2$  & $e_1=\left\{1,4\right\}$, $e_2=\left\{2,4\right\}$, $e_3=\left\{3,4\right\}$  \\ \hline         
$\mathcal{E}_3$  & $e_1=\left\{1,w\right\}$, $e_2=\left\{2,w\right\}$, $e_3=\left\{2,3\right\}$, $e_4=\left\{4,w\right\}$ \\ \hline
$\mathcal{E}_4$  & $e_1=\left\{1,w\right\}$, $e_2=\left\{2,w\right\}$, $e_3=\left\{3,w\right\}$, $e_4=\left\{4,w\right\}$ \\ \hline
\end{tabular}
\ \\
\\
Table 2. Undirected edges in the cut systems
\end{tabular}
\]

To aid in interpreting and constructing the crossover table, we may utilize the cut system diagrams  
(Figure \ref{tikz-E1-4}), the polygon diagrams (Figure \ref{tikz-P1-4}), 
the analogues of the sample ribbon graph neighbourhood in Figure \ref{tikz-ribbon}, and Figures \ref{tikz-XTloops1}, \ref{tikz-XTloops2}. 

\[\label{table-6}
\begin{tabular}[c]{c}
\begin{tabular}
[c]{|l|l|l|}\hline
$\mathcal{E}$    & $e$: oriented edge sequence 
                 & $\tau_{\tilde{e}}$ sequence \\ \hline
$\mathcal{E}_1$  & $e_1^+$, $e_2^+$, $e_3^+$, $e_3^-$, $e_2^-$, $e_1^-$
                 & $c_1^{-1}$, $(c_1c_2)^{-1}$, $c_4, c_4^{-1}, c_1c_2, c_1$ \\ \hline
$\mathcal{E}_2$  & $e_1^+$, $e_2^+$, $e_2^-$, $e_3^+$, $e_3^-$, $e_1^-$ 
                 & $c_1^{-1}$, $c_2$, $c_2^{-1}, c_3, c_3^{-1}, c_1$ \\ \hline           
$\mathcal{E}_3$  & $e_1^+$, $e_2^+$, $e_3^+$, $e_3^-$, $e_2^-$, $e_4^+$, $e_4^-$,  $e_1^-$
                 & $c_1^{-1}$, $(c_4c_1)^{-1}$, $c_3$, $c_3^{-1}$, $c_4c_1$, $c_4$, $c_4^{-1}$, $c_1$  \\ \hline
$\mathcal{E}_4$  & $e_1^+$, $e_2^+$, $e_2^-$, $e_3^+$, $e_3^-$, $e_4^+$, $e_4^-$,  $e_1^-$
                 & $c_1^{-1}$, $c_2$, $c_2^{-1}$, $c_3$, $c_3^{-1}$, $c_4$, $c_4^{-1}$, $c_1$   \\ \hline
                 
\end{tabular}
\ \\
\\
Table 3. Crossover transformation sequences for various cut systems
\end{tabular}
\]

\begin{remark}
Some observations:
\begin{itemize}
  \item The cut system $\mathcal{E}_1$ works well when analyzing (anti-conformal) symmetries of surfaces, or other actions where the branch points are all real.
  \item The cut systems $\mathcal{E}_2$ and $\mathcal{E}_4$ cannot have collapsed edges, as there are no trivial crossover transformations.
  \item The cut systems  $\mathcal{E}_1$ and $\mathcal{E}_3$ have collapsed edges if and only if $c_2=c_1^{-1}$ and  $c_4=c_1^{-1}$, respectively.   
  \item For an arbitrary number of branch points on a sphere quotient, the analogue of $\mathcal{E}_4$ will have no collapsed edges.
\end{itemize}
\end{remark}

\subsection{Sector labelling sequences}\label{subsec-sectorlabrling}
We illustrate the possibilities of sector labelling sequences by computing the sequences 
for the nodes in Figures \ref{tikz-XTloops1} and \ref{tikz-XTloops2}.

First we consider the trivial case of a vertex that is a terminal node of $\mathcal{E}$. 
By calculation, the sector labelling sequence is $h, hc, hc^2,\ldots,hc^{m-1}$,
where  $h$ is the initial label, $c=c_j$ or $c=c_j^{-1}$, and $m=m_j$, as determined by the generating vector. 
This applies to any terminal node in any cut system.  

Now consider the node number $2$ in Figure \ref{tikz-XTloops1}. If we make a small counterclockwise loop
about the vertex, starting in the upper half plane, we encounter the oriented edges $e_1^+$ and $e_2^-$, using the notation in Tables 2 and 3.
From the tables we get $\tau_{e_1^+}=c_1^{-1}$,  $\tau_{e_2^-}=c_1c_2^{-1}$. 
Noting that the third element of the sequence is $h(c_1^{-1})(c_1c_2)=hc_2$, the sequence is then:
\[
h, hc_1^{-1}, hc_2, hc_2c_1^{-1}, hc_2^2, hc_2^2c_1^{-1},\ldots, hc_2^{m_2-1}, hc_2^{m_2-1}c_1^{-1}.  
\]
We can easily see the form of the pattern induced by the stabilizer of the vertex. In a specific case we can check to see if this sequence has repetitions. The calculation for node 3 is similar.

\begin{remark}\label{rk-no-h}
Note that the initial label does not change the pattern of sector transitions, or of vertex collapses, if there are any. So we can safely set $h=1$.
\end{remark}

For a more complex example, we consider node 4 of valence 3 in Figure \ref{tikz-XTloops2}. 
Starting in the sector with the blue loop, the relevant crossover transformations, in order, are:
\[
\tau_{e_3^-}=c_3^{-1}, \text{\ } \tau_{e_2^-}=c_2^{-1}, \text{\ } \tau_{e_1^+}=c_1^{-1},
\]
with product
\[
\tau_{e_3^-}\tau_{e_2^-}\tau_{e_1^+} = c_3^{-1}c_2^{-1}c_1^{-1}=c_4.
\]
The sequence is
\[
1, c_3^{-1}, c_3^{-1}c_2^{-1}, c_4, c_4c_3^{-1}, c_4c_3^{-1}c_2^{-1}, \ldots, c_4^{m_4-1}, c_4^{m_4-1}c_3^{-1}, c_4^{m_4-1}c_3^{-1}c_2^{-1}.
\]

We can use the following proposition as a check on our crossover and sector labeling sequences, valid for any genus of $T$.

\begin{proposition}\label{prop-sectorlabellingproduct} Let all notation be as above, in particular the
monodromy $\xi:\Gamma\rightarrow G$ is fixed for all our calculations.   
For any vertex $v \in \mathcal{E}$ of valency $k$, let $D$ be a small closed disc centered at $v$ 
and containing no other vertices of $\mathcal{E}$. Let $\tau_1,\ldots,\tau_k$ denote the crossover transformations
of the sector boundaries (spokes) as we make a counterclockwise circuit along $\partial D$, starting in a designated sector.
Let $\eta \in \Gamma$ denote the loop based at $z_0$; joining the circle $\partial D$ at a point in the designated sector; 
traversing $\partial D$ once, in a counterclockwise direction; and then returning to the basepoint along the initial path.
Then 
\begin{equation}\label{eq-taucycleeqn}
\tau_1\cdots\tau_k=\xi(\eta).
\end{equation}
If $v$ is a white vertex, then the product is trivial, no matter what sector we start in.
\end{proposition}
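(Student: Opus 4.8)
The plan is to identify the product $\tau_1\cdots\tau_k$ with the $\xi$-image of the loop $\eta$ by exhibiting $\eta$, up to homotopy in $T^\circ$, as a concatenation of the crossover loops attached to the edges meeting $v$. First I would fix a lift $\tilde v\in S$ of $v$ and recall that near $\tilde v$ the map $\pi_G$ looks like $z\mapsto z^{m_v}$, with $m_v=|G_{\tilde v}|$; hence one counterclockwise turn around $v$ in $T$ lifts, near $\tilde v$, to an arc sweeping through an angle $2\pi/m_v$, i.e. through exactly $k=k_v$ of the $k_vm_v$ sectors at $\tilde v$, crossing the $k$ spokes issuing from $\tilde v$ in order. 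Let $e_1,\dots,e_k$ be the edges of $\mathcal E$ at $v$ listed in this counterclockwise order, and $\epsilon_i$ the orientation with which the circuit crosses the spoke over $e_i$, so that $\tau_i=\xi(\delta_{e_i}^{\epsilon_i})$ is precisely the crossover transformation recorded in the sector labelling sequence \eqref{eq-sectorseq}.

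The heart of the argument is the homotopy $\eta\simeq\delta_{e_1}^{\epsilon_1}\cdots\delta_{e_k}^{\epsilon_k}$ in $\pi_1(T^\circ,z_0)$. To see it I would route the initial leg of $\eta$ inside the open polygon $T-\mathcal E$ (harmless and canonical, since $T-\mathcal E$ is a simply connected disc containing $z_0$ and the designated sector, and this is the same convention used in defining the crossover loops), subdivide the $\partial D$-part of $\eta$ at its $k$ spoke crossings, and after each crossing insert a return trip to $z_0$ made entirely inside $T-\mathcal E$ followed by its inverse. Each of the $k$ loops so produced is a simple loop based at $z_0$ whose portions off $\mathcal E$ lie in $T-\mathcal E$ and which meets $\mathcal E$ transversally exactly once, on the spoke over $e_i$; by the uniqueness of the crossover loop up to homotopy and orientation it is homotopic to $\delta_{e_i}^{\epsilon_i}$. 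Applying the homomorphism $\xi$ and \eqref{eq-XTdef2} gives $\xi(\eta)=\tau_1\cdots\tau_k$, which is \eqref{eq-taucycleeqn}. Equivalently, one can lift $\eta$ based at the lift $h\tilde v_0$ of $z_0$ in the designated polygon: by the conjugation rule \eqref{eq-conjlift} the lift ends at $h\,\xi(\eta)\,\tilde v_0$, while by the sector-labelling construction it ends at $(h\,\tau_1\cdots\tau_k)\,\tilde v_0$, and simple transitivity of $G$ on the lifted polygons then forces the two to agree.

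For a white vertex $v$ the disc $D$ meets no cone point, so $D\subset T^\circ$ and $\eta$ bounds the disc $D$ inside $T^\circ$; hence $\eta$ is nullhomotopic there, and since $\xi$ factors through the quotient $\Gamma=\pi_1^{orb}(T)$ of $\pi_1(T^\circ)$ we get $\xi(\eta)=1$ and $\tau_1\cdots\tau_k=1$. Starting the circuit in a different sector merely cyclically permutes the $\tau_i$, which replaces $\tau_1\cdots\tau_k$ by one of its conjugates, still trivial, so the product is independent of the chosen sector.

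The step I expect to be the main obstacle is the orientation bookkeeping: one must verify that the counterclockwise circuit pairs each spoke crossing with the correctly oriented crossover loop $\delta_{e_i}^{\epsilon_i}$, so that the answer comes out as $\xi(\eta)$ and not $\xi(\eta)^{-1}$ or a stray conjugate, and that a single turn about $v$ meets exactly $k_v$ spokes rather than $k_vm_v$. Both facts are forced by the sign conventions fixed in Section \ref{subsubsec-e-orientation} together with the construction of the sector labelling sequence, so once those are invoked the remaining verification is routine.
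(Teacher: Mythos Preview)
Your proposal is correct and follows essentially the same strategy as the paper: establish the homotopy $\eta\simeq\delta_{e_1}^{\epsilon_1}\cdots\delta_{e_k}^{\epsilon_k}$ in $\pi_1(T^\circ,z_0)$ by exploiting that the portions of these loops lying off the spokes are contained in the simply connected polygon $T-\mathcal E$, then apply $\xi$. The paper runs the homotopy in the opposite direction (starting from the concatenation $\delta_1\cdots\delta_k$ and sliding the ``middle portions'' onto $\partial D$), whereas you start from $\eta$ and insert cancelling return trips to $z_0$; these are inverse descriptions of the same deformation, and your treatment of the white-vertex case and the sector-independence via cyclic conjugation also matches the paper's argument.
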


\begin{proof}
Let $\delta_1,\ldots,\delta_k$ be a sequence of crossover loops, defining the crossover sequence 
by $\tau_i=\xi(\delta_i)$. Each $\delta_i$ may be constructed so that it crosses its associated spoke 
on $\partial D$,  in the counterclockwise direction.  For every product $\delta_i\delta_{i+1}$ there are 
three portions of the combined loop: the portion of $\delta_i$ before it hits its associated spoke, 
followed by the portion of $\delta_i\delta_{i+1}$ before $\delta_{i+1}$ hits its associated spoke, 
and then the remaining portion of $\delta_{i+1}$. Since the middle portion lies entirely 
within the open polygon, except the end points, we may construct a homotopy that moves the middle portion of 
$\delta_i\delta_{i+1}$ to a path along $\partial D$ that remains within the sector. 
Applying such a homotopy to all sectors, except the initial sector, 
we see that  $\delta_1\cdots\delta_k$ is homotopic to $\eta$. 

The equation  \eqref{eq-taucycleeqn}
then follows from the definitions and  $\delta_1\cdots\delta_k =\eta$. If $v$ is a white node, then $\eta$ 
is trivial and $\tau_1\cdots\tau_k=\xi(\eta)=1$.  We may permute the factors in the crossover product cyclically:
\[
1= \tau_1^{-1}\tau_1\cdots\tau_k\tau_1 = \tau_2\cdots\tau_k\tau_1
\] 
Iterating this shift, we see that we may start the product in any sector, still with a product of 1. 
\end{proof}

\section{Cayley graphs, modular companions, and partial isometries}\label{sec-Cayley} 
In constructing the surface $S$ from a tiling, the polygons $\left\{  g\mathcal{P}:g\in G\right\}$ are the puzzle pieces 
alluded to previously. The monodromy $\xi$ and the derived crossover transformation sequence $XTS$ are ``recipes'' telling us how 
to conformally glue labelled polygons $\left\{  g\mathcal{P}:g\in G\right\}  $ together to get
a surface with $G$ action. If $\xi^{\prime}=\xi\circ\mu^{-1}$ is another
monodromy we have a modified instruction set to piece together $\left\vert
G\right\vert $ copies of the same polygon.  So, some of the geometry of the
polygon $\mathcal{P}$ is retained in all of the modular companions. A third type of \textquotedblleft monodromy recipe\textquotedblright, 
with much more detail and with both geometric and algebraic interpretations, is the \emph{dual, embedded Cayley graph}. 
Our final main section is devoted to this topic and its application to the study of modular companions, and partial isometries.

\begin{remark}\label{rk-any2surfs}
Note that we are focusing on comparisons of modular companions, but in fact any two surfaces 
lying over $T$ with appropriate monodromies can be compared, including partial isometries. 
\end{remark}

\subsection{The dual Cayley graph}\label{subsec-CaleyGraph}
The embedded graph $\widetilde{\mathcal{E}}$ in $S$ defines a map on $S$ and so has an embedded dual graph
$\widetilde{\mathcal{C}}$, which also defines a map on $S$.
We call $\widetilde{\mathcal{C}}$ the \emph{dual Cayley graph}, 
and note that it can be made $G$ invariant.
Abstractly, as a geometrically constructed graph, the vertices of $\widetilde{\mathcal{C}}$ are
the lifted polygons $\left\{  g\mathcal{P}:g\in G\right\}$. Two vertices are
connected by a bi-directional edge in $\widetilde{\mathcal{C}}$ if and only the polygons meet along an edge in
$\widetilde{\mathcal{E}}$, producing one edge in $\widetilde{\mathcal{C}}$ for each edge in the intersection. 
The faces of $S-\widetilde{\mathcal{C}}$ correspond to the vertices of $\widetilde{\mathcal{E}}$. These faces may be constructed with the help of the sector labelling sequences discussed in Section \ref{subsubsec-sectorlabelling}. 

In order to get a good relationship to the various group Cayley graphs that we discuss in Section \ref{subsec-groupCayley}, 
we impose the following restriction.
\begin{assumption}
Let $\mathcal{E} \subseteq T$ and  $\widetilde{\mathcal{E}} \subseteq S$ be as previously defined. 
When constructing a dual Cayley graph we assume that there are no edge collapses in the tiling on $S$, determined by 
$\widetilde{\mathcal{E}}$. 
This is equivalent to having no trivial crossover  transformations $\tau_{e^\pm}$ for edges  $e \in \mathcal{E}$, 
or alternatively, no loops in the dual Cayley graph. 
\end{assumption}

Here is our definition of the dual Cayley graph, which also gives us a construction method.

\begin{definition}\label{def-cayley dual}
Let  $G$ act conformally on  $S$, with quotient orbifold $T=S/G$, and let $\mathcal{E}$ be a cut system on $T$. 
Suppose that $\mathcal{C}= \{\delta_e: e \in \mathcal{E}\}$ is a set of crossover loops that meet only at their base point $z_0\in T^\circ$. 
Assume that each loop in $\mathcal{C}$ crosses, transversally, exactly one arc from $\mathcal{E}$,
and that the intersection point is in the interior of both the loop and the arc. 
Such a system of loops on $T$ will be called the \emph{dual Cayley graph} of  $\mathcal{E}$. 
The lift $\widetilde{\mathcal{C}}$ of $\mathcal{C}$ to $S$ will be called the \emph{dual Cayley graph} of  $\widetilde{\mathcal{E}}$.
\end{definition} 

\begin{remark}\label{rk-Cayleymap}
If there are no edge collapses, then each edge of $\widetilde{\mathcal{C}}$ has distinct endpoints since they lie in disjoint polygons, and so edges cannot be loops. Also, if we assign different colours to the loops in  $\mathcal{C}$, we obtain a lifted colouring on $\widetilde{\mathcal{C}}$. 
\end{remark}

\begin{example}\label{ex-S3-2233-3}  
For examples with 4 cone points, we describe a picture of a local model of the graph $\widetilde{\mathcal{C}}$ 
superimposed upon a model local picture of $\widetilde{\mathcal{E}}$. Having chosen $\mathcal{E}$ from Figure \ref{tikz-E1-4}
we may draw upon the corresponding polygon from Figure \ref{tikz-P1-4}, which we assume corresponds to the Cayley group element node $1$.
Draw a series of spokes from the center of the polygon to slightly beyond the midpoint of each side.
At the end of each spoke write down the group element of the new polygon entered while traveling outwards along the spoke. 
Of course this is just the crossover sequence $\tau_{\tilde{e}_1},\ldots,\tau_{\tilde{e}_{2k}}$,
written down in counterclockwise-order, starting at the appropriate edge. 

Now let us specifically pick $\widetilde{\mathcal{E}_4}$ and $\widetilde{\mathcal{P}_4}$, and the two modular companions
determined by $\mathcal{V}_1,\mathcal{V}_2$ in Example \ref{ex-S3-2233-2}. According to Table 3  we get these crossover sequences.
\begin{eqnarray*}
 \mathcal{V}_1=(x,x,y,y^{-1}) &\rightarrow& (x,x,x,y,y^{-1},y^{-1},y,x),\\
 \mathcal{V}_2=(x,xy,y,y)     &\rightarrow& (x,xy,xy,y,y^{-1},y, y^{-1},x).  
\end{eqnarray*}
For the local picture at the node labeled $g$ we simply left translate all the labels by $g$. 
Clearly, the multi-edge structure of the two graphs are different.  See also Section \ref{subsec-modcompdiff}.
\end{example}

\subsection{Group Cayley graphs}\label{subsec-groupCayley}
The obvious algebraic counterpart to the dual Cayley graph $\widetilde{\mathcal{C}}$ is the standard group Cayley graph, 
$\mathrm{Cay}(C,\Sigma)$, described in the following paragraphs (\ref{subsubsec-groupCayley1}).  It turns out that the standard group Cayley graph is 
insufficient to capture all the details of our tilings because of multi-edges, see Proposition \ref{prop-multiedge}. So, we introduce a modified version in Section \ref{subsubsec-groupCayley2}.   

\subsubsection{Standard group Cayley graph}\label{subsubsec-groupCayley1}
The standard \emph{group Cayley graph} $\mathrm{Cay}(C,\Sigma)$ is described in the points below. The generating set is $\Sigma=\left\{\tau_e^\pm, e \in \mathcal{E}\right\}$,  
which acquires two generators  $\tau_e^\pm$,  one for each orientation of $e$, except when $\tau^2_e=1$. 
This double counting ensures that  $\Sigma$ is a symmetric generating set.

The simplest version of a \emph{symmetric, connected, undirected group Cayley graph}, $\mathrm{Cay}(G,\Sigma)$, for $\Sigma \subseteq G$ is characterized by:

\begin{enumerate}
  \item Nodes: $\{g \in G\}$;
  \item Edges: $\{(g,g\sigma) , g \in G, \sigma \in \Sigma\}$, \\
   edges determined by endpoints, so no multi-edges;
  \item Connected: $\Sigma$ generates $G$;
  \item Symmetry: $\sigma^{-1} \in \Sigma \text{ if } \sigma \in \Sigma$ (edges are bidirectional); and
  \item No loops: $\Sigma$ does not contain the identity of  $G$.
\end{enumerate}  

The sole issue that may cause the dual Cayley graph and the associated group Cayley graph to be different is allowing for repeated generators. 
In $S$, two distinct polygons $g\mathcal{P}$ and  $h\mathcal{P}$  may meet in more than one edge in $\widetilde{\mathcal{E}}$, 
and so there may be multiple bidirectional edges between vertices of  $\widetilde{\mathcal{C}}$. 
This situation is described completely in terms of repeated generators in Proposition \ref{prop-multiedge}. 
On the other hand, as noted in item 2 above, edges are determined by their endpoints in the standard group Cayley graph,
so there cannot be any multi-edges. 
We can adjust our definition of a group Cayley graph to repair this discrepancy, see Section \ref{subsubsec-groupCayley2}.
       
Other related graphs are the group Cayley graph determined by 
$(G,\mathcal{V})$, where $\mathcal{V}$ is a generating vector and 
the \emph{Reidemeister-Schreier coset graph} determined by the pair $(\Gamma,\Pi)$ in equation \eqref{eq-Fuchspair}
with the generating set $\mathcal{W}$ in equation \eqref{eq-W2V}. 
Of course we should symmetrize the generating sets. These graphs and Fuchsian pairs are discussed \cite{Si1,Si2}.

\subsubsection{Modified group Cayley graph}\label{subsubsec-groupCayley2} 
To address the issue caused by multi-edges, we define the \emph{modified group Cayley graph},  $\mathrm{Cay}(G,E,\Sigma)$,
where $E$ denotes the set of directed edge types, and $\Sigma=\left\{\tau_e^\pm, e \in \mathcal{E}\right\}$ is the generating set,
The colour of an edge of type $e$ and its opposite $e^{op}$ are the same. The nodes of  $\mathrm{Cay}(G,E,\Sigma)$ are still the group elements. 
The directed edges are triples $(e,g,g\tau_e)$, connecting $g$ to $g\tau_e$. The opposite edge, connecting  $g\tau_e$ to $g$, is  
\[
(e^{op},g\tau_e,g)=(e^{op},g\tau_e,g\tau_e\tau_{e^{op}}),
\]
noting that $\tau_{e^{op}}=\tau_e^{-1}$. 
The modified version allows us to have repeat generators, particularly involutions, which are very common. 
The two edges $(e,g,g\tau_e)$ and  $(e^{op},g\tau_e,g)$ correspond to the same arc in the 
dual Cayley graph with opposite orientations. 
To help visualize the graph, different edge types can be drawn according to the colouring scheme.   
It is easily shown that when there are no edge collapses, that the dual Cayley graph 
and the modified group Cayley graph are isomorphic.

\subsection{Cayley graphs and modular companions}\label{subsec-CayletModComp}

As previously promised in Section \ref{subsec-modcomp}, we will use Cayley graphs to geometrically
distinguish two modular companions. We first look at some crude measures that clearly show that the geometry of the tilings and the Cayley graphs
coming from two surfaces lying over $T$ are different. 

For modular companions that cannot be so distinguished we use \emph{partial isometries} to more finely distinguish them. 
We see from the Table 3 and the worked examples of sector labelling sequences, that everything can be computed in 
terms of a generating vector, $\mathcal{V}$, or more specifically the crossover transform sequence
\[
XTS=XTS(\mathcal{V})=(\tau_1,\ldots,\tau_{2k})=(\tau_{\tilde{e}_1},\ldots,\tau_{\tilde{e}_{2k}}). 
\]
The model polygons, including edge type sequence and orientation, are the same for every surface 
lying over $T$, so we can use the model polygons and the crossover transform sequence to detect geometrical differences between these surfaces. 
See also Example \ref{ex-S3-2233-3}.

\subsection{Differentiating features for surfaces over $T$}\label{subsec-modcompdiff}  
Three geometrical features that we can use to differentiate surfaces lying over $T$
are: 1) polygons that have an edge collapse, 2) polygons that meet in multiple edges, and 3) polygons that have a vertex collapse. 
As we noted previously, the characteristics are the same for every polygon in the tiling. 
All the features can be diagrammed on the model polygon and computed with the crossover sequence $(\tau_1,\ldots,\tau_{2k})$.
For the planar actions with four cone points considered in this paper the polygons are very simple (Figure \ref{tikz-P1-4}), 
and the pictures can be sketched by hand.   

First we consider edge collapses.  Take our model polygon and place an edge label next to each edge. 
They should be of the form $e_i^\pm$ (see Table 3), so that we can easily see paired sides.
For any two sides for which there is an edge collapse, draw the sides  in a unique color, other than black.
From the sequence  $(\tau_1,\ldots,\tau_{2k})$ we find the number of collapsed edges 
by counting the number of trivial entries, which will be even. The polygon display shows a bit more detail by showing the placement of the collapsed edge pairs.    
     
Next, we consider multi-edge intersections. Again we use our polygon model with edge labels. 
Some edges will have a crossover transformation which has repetitions in the crossover transformation sequence. Color all the edges with this crossover  transformation with the same, unique color. All of the edges whose crossover transformation has no repeats should be coloured black. Again, the polygon display gives more visual detail than the crossover transformation sequence. See also Example \ref{ex-S3-2233-3}.

Finally, vertex collapse can be a differentiator among modular companions.  These can be visualized in the polygon by drawing an interior diagonal between two vertices that meet at the same point, in $S$.  The vertices must have the same label.
 
\subsection{Partial isometries}\label{subsec-partialIsoCay}
Partial isometries are our main use of Cayley graphs and our main tool for distinguishing modular companions.
Before getting started, lets give a forward reference to some basic facts recorded as Lemmas in the last section of the paper, Section  \ref{sec-BasicLemmas}. We leave them to the end  of the paper so as not to disturb the flow of the narrative.
Also, Remark \ref{rk-any2surfs} applies  here.

Suppose that we want to construct an isometry (biholomorphic map) between two different modular companions
 $S_1$ and $S_2$, that commutes with the orbifold projections  $\pi_1, \pi_2$. 
Though this is doomed to failure, we could try to get a partial isometry $\phi: \Omega_1 \rightarrow \Omega_2$, where  $\Omega_i \subseteq S_i,\text{\ } i=1,2$  are large connected subsets (diagram \eqref{dia-partialiso}). This can give us a measure of how different the two surfaces are combinatorially. 

\begin{equation}\label{dia-partialiso} 
\xymatrix{
    \Omega_1 \ar[r]^\phi \ar[d]_{\pi_1} & \Omega_2 \ar[d]^{\pi_2} \\
    T \ar[r]^{id}       & T }
\end{equation}
In visual terms, the two sets $\Omega_1$, $\Omega_2$ are patches on the surfaces that match isometrically when we try to align one surface with the other.
So, we call $\Omega_1$, $\Omega_2$ \emph{patches}  and $\phi$ a \emph{patch match}.

Now, let $\mathcal{E}$ be a cut system on $T$ and let  $\mathcal{C}$ be the dual Cayley graph, determined by the cut system. 
Each of the modular companions $S_1,\ldots,S_\ell$ lying  over $T$, is determined by a monodromy $\xi_i:\Gamma\rightarrow G$, which in turn 
determines a holomorphic projection $\pi_i: S_i \rightarrow T$.  
Using the projections we can pull up $\mathcal{E}$ and  $\mathcal{C}$ to covering graphs 
$\widetilde{\mathcal{E}}_i$ and $\widetilde{\mathcal{C}}_i$, respectively.
For each $i$, a distinguished open polygon $\widetilde{\mathcal{P}}^\circ_i \subset S_i$ 
may be selected from among the open components of $\pi^{-1}_i(T-\mathcal{E})$.
For convenience we will drop the  $\widetilde{\ }$  over the polygons and let the suffix determine where they live.
Once we have determined the sequence of oriented edges of $\mathcal{E}$, $e_1,\ldots,e_{2k}$, as a polygon boundary, 
the crossover transformations in $S_i$ may be determined by
\[
\tau_{j,i} = \xi_i(\delta_{e_j}),
\]  
for an appropriately oriented loop $\delta_{e_j}$ in $\mathcal{C}$. 
The geometry of the modular companions are completely determined from the fixed geometry on $T$, the crossover sequences 
$XTS_i=(\tau_{1,i},\ldots,\tau_{2k,i})$, and the biholomorphic maps $\pi_i: \mathcal{P}^\circ_i \rightarrow T-\mathcal{E}$.

 We next focus on an arbitrary pair of modular companions, which we assume are $S_1$ and $S_2$. 
A partial isometry always exists, namely
\begin{equation}\label{eq-phizero}
\phi_0=\pi_2^{-1}\circ\pi_1: \mathcal{P}^\circ_1 \rightarrow \mathcal{P}^\circ_2, 
\end{equation} 
since the projection maps are biholomophic when restricted to the distinguished  open polygons.
We can do a little better. Let $g \in G$ and let $g^\prime=w(g)$,  be (for the moment) an arbitrary element of $G$. 
Then we have the following biholomorphic map
\begin{equation}\label{eq-phiggprime}
\phi_{g,g^\prime}=\epsilon_2(g^\prime)\circ\phi_0\circ\epsilon_1(g)^{-1} : g\cdot\mathcal{P}^\circ_1 \rightarrow g^\prime\cdot\mathcal{P}^\circ_2
\end{equation}  
(two different actions!). If $g\rightarrow g^\prime$ is a bijection then $\bigcup_{g \in G} \phi_{g,g^\prime}$ is a biholomorphic map from $S_1-\widetilde{\mathcal{E}}_1$ to  $S_2-\widetilde{\mathcal{E}}_2$. All we have to do is fix it up on the edges, there could be some ripping and tearing! 

\medskip
Next, we extend these maps to nipped polygons.
\begin{lemma}\label{lem-extend2nipped}
Suppose that $\mathcal{P}^\circ_1$ and $\mathcal{P}^\circ_2$ have their edge collapses in exactly the same locations, i.e., $\tau_{j,1} =1$ if and only if 
$\tau_{j,2} =1$. Then the  maps in \eqref{eq-phizero} and \eqref{eq-phiggprime} extend to homeomorphisms
\[
\phi_0=\pi_2^{-1}\circ\pi_1: \mathcal{P}_1^\star \rightarrow \mathcal{P}_2^\star, 
\]
and 
\[
\phi_{g,g^\prime}=  g\cdot\mathcal{P}_1^\star \rightarrow g^\prime\cdot\mathcal{P}_2^\star
\]
Moreover, the maps preserve the type of edges. 
\end{lemma}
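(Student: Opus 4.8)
The plan is to extend $\phi_0=\pi_2^{-1}\circ\pi_1$ across the boundary one open edge at a time, using the side-pairing structure to organize the gluing. First I would recall that $\mathcal{P}_i^\star$ is the closed polygon with vertices removed, so $\mathcal{P}_i^\star = \mathcal{P}_i^\circ \cup (\text{open edges of } \partial\mathcal{P}_i)$; it therefore suffices to define $\phi_0$ on each open lifted edge $\tilde e$ of $\mathcal{P}_1^\star$ and check continuity. For a given oriented edge $e_j$ of $\mathcal{E}$, there are two sub-cases governed by whether $\tau_{j,1}=1$ (hence, by hypothesis, $\tau_{j,2}=1$) or both are nontrivial. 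When $\tau_{j,1}=\tau_{j,2}=1$ there is an edge collapse on \emph{both} sides: the two open edges of $\mathcal{P}_i^\star$ lying over $e_j$ coincide, and $\pi_i$ restricted to this single edge is a homeomorphism onto the open arc $e_j \subset \mathcal{E}$ (Proposition \ref{prop-edgecollapse} and the discussion of edge collapse in Section \ref{subsubsec-crossover1}); so $\phi_0=\pi_2^{-1}\circ\pi_1$ is a well-defined homeomorphism on that edge. When both $\tau_{j,i}$ are nontrivial, $\mathcal{P}_i^\star$ has two distinct open edges $\tilde e_i^{\,+}, \tilde e_i^{\,-}$ over $e_j$, and $\pi_i$ carries each homeomorphically onto $e_j$; I would define $\phi_0$ on $\tilde e_1^{\,+}$ and $\tilde e_1^{\,-}$ by $\pi_2^{-1}\circ\pi_1$ onto $\tilde e_2^{\,+}$ and $\tilde e_2^{\,-}$ respectively, matching the $+/-$ orientation labels coming from the ribbon graph (Section \ref{subsubsec-e-orientation}).

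Next I would verify continuity of the resulting map $\phi_0:\mathcal{P}_1^\star\to\mathcal{P}_2^\star$. Away from the boundary it is just the biholomorphism $\pi_2^{-1}\circ\pi_1$. At a point $p$ in the interior of an open edge $\tilde e_1$, one uses the local model of item 6--7 in Section \ref{subsec-tiling}: a neighbourhood of $p$ in $\mathcal{P}_1^\star$ is a half-disc (a closed half-space locally), and $\pi_1$ maps it homeomorphically onto a neighbourhood of $\pi_1(p)$ in the half-disc cut out of $T$ by $\mathcal{E}$; the same holds downstairs for $\pi_2$, so $\phi_0$ is a homeomorphism near $p$. Since $\phi_0$ is a continuous bijection on the closure-minus-vertices of compact sets (modulo vertices) and is locally a homeomorphism everywhere on $\mathcal{P}_1^\star$, it is a homeomorphism. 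Injectivity across distinct edges follows because $G$ acts freely on the lifted edges (item 4 of Section \ref{subsec-tiling}), so no two open edges of $\mathcal{P}_1^\star$ get identified, and likewise for $\mathcal{P}_2^\star$; the edge-collapse hypothesis ensures the \emph{combinatorial pattern} of which edges are identified agrees on the two sides, which is exactly what is needed for $\phi_0$ to be well-defined and bijective on $\mathcal{P}_1^\star$.

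For the equivariant maps $\phi_{g,g'}=\epsilon_2(g')\circ\phi_0\circ\epsilon_1(g)^{-1}$, the extension is automatic: $\epsilon_1(g)^{-1}$ carries $g\cdot\mathcal{P}_1^\star$ homeomorphically onto $\mathcal{P}_1^\star$ (the automorphisms of $S_1$ are homeomorphisms and preserve the tiling $\widetilde{\mathcal{E}}_1$, hence carry nipped polygons to nipped polygons), then $\phi_0$ is the homeomorphism just constructed, then $\epsilon_2(g')$ carries $\mathcal{P}_2^\star$ homeomorphically onto $g'\cdot\mathcal{P}_2^\star$. Composing three homeomorphisms gives the claimed homeomorphism $g\cdot\mathcal{P}_1^\star\to g'\cdot\mathcal{P}_2^\star$. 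Finally, preservation of edge type: $\phi_0$ by construction sends the open edge of $\mathcal{P}_1^\star$ lying over $e_j$ (with a given $+/-$ orientation label) to the open edge of $\mathcal{P}_2^\star$ lying over the same $e_j$ with the matching label, since both are characterized by $\pi_i(\tilde e)=e_j$ and the orientation marking is read off the common ribbon graph on $T$; conjugating by the $\epsilon_i$ as above only translates polygons by group elements and does not change which arc of $\mathcal{E}$ an edge lies over, so edge type is preserved throughout.

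I expect the only real subtlety — the "main obstacle" — to be the bookkeeping at an edge collapse: one must make sure that when $\tau_{j,1}=\tau_{j,2}=1$ the single identified edge upstairs in $S_1$ is matched to the single identified edge upstairs in $S_2$ with consistent orientation, and that no hidden mismatch arises from the fact that a collapsed edge is glued to itself (the closure of $\mathcal{P}_i^\circ$ folds along it). This is precisely where the hypothesis "$\tau_{j,1}=1 \iff \tau_{j,2}=1$" is used, and checking it amounts to comparing the two side-pairing patterns, which are both read off the same cut system $\mathcal{E}$ on $T$; everything else is the routine local-homeomorphism verification sketched above.
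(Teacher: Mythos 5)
Your proposal is correct and follows essentially the same route as the paper's (very terse) proof: reduce $\phi_{g,g'}$ to $\phi_0$ by composing with the group actions, then extend $\phi_0$ over the open boundary edges using the local structure of the covers near an edge point, treating the simultaneous edge-collapse case separately — the paper compresses your edge-by-edge continuity check into an appeal to its extension Lemma \ref{lem-extension} together with the local lift Lemmas \ref{lem-SClifts} and \ref{lem-doublelifts}. Your identification of where the hypothesis $\tau_{j,1}=1 \iff \tau_{j,2}=1$ is needed (well-definedness and injectivity of the boundary extension) matches the "care is needed" remark in the paper.
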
  

\begin{proof}
We need only prove the lemma for $\phi_0$ as the second statement follows easily from the first. 
The first statement may be proved using Lemma \ref{lem-extension}. Some care is needed, as one must look at 
the case of a simultaneous edge collapse as a special case.    
\end{proof}

In what follows, let  $H=H_1, \text{  }H^\prime=H_2$ be subsets of $G$, and $w:g\rightarrow g^\prime$ be a bijective map as previously introduced  (primes used to simplify notation).
We will consider the partially defined map  $\bigcup_{g \in H} \phi_{g,g^\prime}$. 
Consider these sets:
\begin{eqnarray*}
 H_i\mathcal{P}^\circ_i & = & \bigcup_{g \in H_i} g\mathcal{P}^\circ_i \\
 H_i\mathcal{P}_i & = &  \bigcup_{g \in H_i} g\mathcal{P}_i = \overline{H_i \mathcal{P}^\circ_i}.\\
 H_i\mathcal{P}^\star_i &=&  \bigcup_{g \in H_i} g\mathcal{P}^\star_i=  H_i\mathcal{P}_i \cap S_i^\star
\end{eqnarray*}
We call an edge $\tilde{e} \in \widetilde{\mathcal{E}}_i$ an \emph{interior edge} of $H_i\mathcal{P}_i$ 
if $\tilde{e} \subseteq g_1\mathcal{P}_i\cap g_2\mathcal{P}_i$ for distinct $g_1,g_2 \in H_i$.
The interior edges (except possibly the endpoints) lie in the interior of  $H_i \mathcal{P}_i$. 
The \emph{boundary edges} lie in $ \partial H_i \mathcal{P}_i$. 
Our next proposition gives a criterion for continuously extending $\bigcup_{g \in H} \phi_{g,g^\prime}$ to a bijective isometry
$H_1 \mathcal{P}_1^\star\rightarrow H_2 \mathcal{P}_2^\star$ or $H\mathcal{P}_1^\star\rightarrow H^\prime \mathcal{P}_2^\star$,
using the prime notation.

\begin{proposition}\label{prop-ctsext}
Let $S_1,S_2$ be two modular companions lying over their common quotient orbifold $T$ via the projections $\pi_i : S_i\rightarrow T$. 
Let $H$ be a subset of  $G$, containing the identity $1$ and all crossover transformations $\tau_{j,1}$.  
Let $w:H\rightarrow H^\prime \subseteq G$, $g\rightarrow w(g)=g^\prime$ be a $1-1$ assignment of the elements of  $H$ to $H^\prime\subseteq G$, satisfying:
\begin{eqnarray*}
% \nonumber % Remove numbering (before each equation)
w(1) &=& 1 \\
w(\tau_{j,1}) &=& \tau_{j,2} 
\end{eqnarray*} 
Then, the map   $\bigcup_{g \in H} \phi_{g,g^\prime}$ extends to a  bijective isometry 
$\phi^\star:H\mathcal{P}_1^\star \rightarrow H^\prime\mathcal{P}_2^\star$ 
if and only if for each interior edge $(e_j,g,g\tau_{j,1})$ of type $e_j$, and $g\in H$:
\begin{equation}\label{eq-MCcompatible}
w(g\tau_{j,1}) =  w(g)w(\tau_{j,1})=w(g)\tau_{j,2}.
\end{equation} 
\end{proposition}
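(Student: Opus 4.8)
The plan is to reduce the existence of $\phi^\star$ to a pointwise compatibility check along the interior edges of $H\mathcal{P}_1$, and then translate that check, via the side-pairing structure of Section \ref{subsubsec-sidepairing} and the freeness of the $G$-action on lifted edges, into the group identity \eqref{eq-MCcompatible}.

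First I would record the preliminaries. Since $w$ is injective with $w(1)=1$ and $w(\tau_{j,1})=\tau_{j,2}$, we get $\tau_{j,1}=1$ exactly when $\tau_{j,2}=1$, so $\mathcal{P}_1^\circ$ and $\mathcal{P}_2^\circ$ have their edge collapses in the same locations and Lemma \ref{lem-extend2nipped} applies: each $\phi_{g,g'}$ extends to a homeomorphism $g\mathcal{P}_1^\star\rightarrow g'\mathcal{P}_2^\star$ that preserves edge types and is biholomorphic on the open polygon, and one checks directly that $\pi_2\circ\phi_{g,g'}=\pi_1$ using that each $\pi_i$ is $\epsilon_i(G)$-invariant. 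Each $g\mathcal{P}_1^\star$ is closed in $H\mathcal{P}_1^\star$ and two distinct such nipped polygons meet only along (nipped) interior edges of $H\mathcal{P}_1$, so by the pasting lemma $\bigcup_{g\in H}\phi_{g,g'}$ assembles into a well-defined continuous map on $H\mathcal{P}_1^\star$ if and only if, for every interior edge shared by $g_1\mathcal{P}_1^\star$ and $g_2\mathcal{P}_1^\star$, the two maps $\phi_{g_1,g_1'}$ and $\phi_{g_2,g_2'}$ agree on that edge. (The hypothesis that $H$ contains $1$ and all $\tau_{j,1}$ is what makes the assignments $w(1)=1$, $w(\tau_{j,1})=\tau_{j,2}$ meaningful and forces the interior-edge condition to include all edge types.)

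The heart of the argument is this agreement computation. Fix an interior edge realized by the directed edge $(e_j,g,g\tau_{j,1})$, so $g,g\tau_{j,1}\in H$ and there is an edge $\tilde e$ of type $e_j$ with $\tilde e\subseteq g\mathcal{P}_1\cap g\tau_{j,1}\mathcal{P}_1$. Let $a_i,b_i$ be the two edges of $\mathcal{P}_i$ lying over $e_j$, with the $+$ orientation on $a_i$; the side-pairing relation ``$\sigma_{\tilde e}=\tau_e^{-1}$ in the distinguished polygon'' gives $b_i=\tau_{j,i}^{-1}a_i$, i.e. $a_i=\tau_{j,i}b_i$, in $S_i$. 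Hence $\tilde e$ is $g\cdot a_1$ seen inside $g\mathcal{P}_1$ and $g\tau_{j,1}\cdot b_1$ seen inside $g\tau_{j,1}\mathcal{P}_1$. Chasing a point $p\in\tilde e$ through the definitions and using $\pi_2\circ\phi_0=\pi_1$, the map $\phi_{g,w(g)}$ sends $p$ to the unique point of the edge $w(g)\cdot a_2$ of $S_2$ lying over $\pi_1(p)$, while $\phi_{g\tau_{j,1},w(g\tau_{j,1})}$ sends $p$ to the unique point of $w(g\tau_{j,1})\cdot b_2$ over $\pi_1(p)$; since $a_2=\tau_{j,2}b_2$ in $S_2$, the first target edge is $w(g)\tau_{j,2}\cdot b_2$. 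Because $G$ acts freely on the lifted edges (Section \ref{subsec-tiling}, item 4), these two edges of $S_2$ coincide if and only if $w(g)\tau_{j,2}=w(g\tau_{j,1})$, which is exactly \eqref{eq-MCcompatible}; and when they coincide both maps send $p$ to the same point, so they agree along all of $\tilde e$.

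It then remains to upgrade the continuous map $\phi^\star$ produced by \eqref{eq-MCcompatible} to a bijective isometry, and to note the converse. For bijectivity: $\phi^\star$ maps $g\mathcal{P}_1^\circ$ biholomorphically onto $g'\mathcal{P}_2^\circ$ with $g\mapsto g'$ injective, so it is injective on the dense open set $H\mathcal{P}_1^\circ$ and surjective onto $H'\mathcal{P}_2^\star$; near each interior edge, \eqref{eq-MCcompatible} identifies the images of the two adjacent open polygons with the two polygons adjacent to $\phi^\star(\tilde e)$ in $S_2$, so $\phi^\star$ is a local homeomorphism, and a continuous open surjection that is injective over the dense set $T^\circ$ and has singleton fibres there must be globally injective; biholomorphy on the interior is inherited from the $\phi_{g,g'}$. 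Conversely, if such a $\phi^\star$ exists then on each interior edge $\tilde e$ of $(e_j,g,g\tau_{j,1})$ it restricts simultaneously to $\phi_{g,w(g)}$ and to $\phi_{g\tau_{j,1},w(g\tau_{j,1})}$, which by the computation above forces \eqref{eq-MCcompatible}. I expect the main obstacle to be bookkeeping rather than conceptual difficulty: keeping the $\pm$ orientations of the two edges over $e_j$ straight in both $S_1$ and $S_2$, and applying the side-pairing identity $a_i=\tau_{j,i}b_i$ with the correct placement of $\tilde e$ inside its two adjacent polygons, so that the correct element ($w(g)\tau_{j,2}$ versus $w(g\tau_{j,1})$) appears.
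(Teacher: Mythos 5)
Your proposal is correct and follows essentially the same route as the paper: necessity comes from the uniqueness of the continuous extension on each nipped polygon (the paper's ``tearing'' argument), and sufficiency reduces to edge-by-edge agreement governed exactly by \eqref{eq-MCcompatible}. The only difference is mechanical --- you glue the Lemma \ref{lem-extend2nipped} extensions via the pasting lemma and an explicit side-pairing/freeness computation on each shared edge, where the paper instead lifts a tubular neighbourhood of $\pi_1(\tilde e)$ and invokes Lemmas \ref{lem-SClifts} and \ref{lem-doublelifts} --- and your added bijectivity/isometry remarks only make explicit what the paper leaves implicit.
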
 

\medskip

\begin{proof}
We first note that if an edge of type $e_j$ of $g\mathcal{P}_1$  is in the interior of  $H\mathcal{P}_1$, 
then  $H$ must contain $g\tau_{j,1}$.
Now, suppose that  $g_1,g_2 \in H$ and  $g_1\mathcal{P}_1$ and  $g_2\mathcal{P}_1$ share an interior edge.
Then, $g_2=g_1\tau_{j,1}$ for some $j$.
If  $\phi^\star$ is continuous then  \eqref{eq-MCcompatible} must hold for $g=g_1$. 
Otherwise, there is a ``tearing'' discontinuity along $g_1\mathcal{P}_1 \cap  g_2\mathcal{P}_1$ 
since $w(g_1)\mathcal{P}_2$ and $w(g_2)\mathcal{P}_2$ do not meet along an interior edge of type $e_j$.  
This shows that the condition  \eqref{eq-MCcompatible} is necessary.

Next, suppose that there is an interior edge $\tilde{e} \subset H\mathcal{P}_1$ of type $e_j$.  
As above, there are $g_1,g_2 \in H$ such that $g_2=g_1\tau_{j,1}$ and $\tilde{e} \subseteq g_1\mathcal{P}_1\cap g_2\mathcal{P}_1$.  
There may be several lifts of distinct types in $g_1\mathcal{P}_1\cap g_2\mathcal{P}_1$ if there are repetitions of the value 
$\tau_{j,1}$ in the crossover sequence. 
Because $w(\tau_{j,1})=\tau_{j,2}$ and $w$ is injective, then  $\tau_{j,2} = \tau_{j^\prime,2}$ if and only if $\tau_{j,1} = \tau_{j^\prime,1}$, and we get the same repetitions in the crossover sequence for $S_2$.
Since
\[
w(g_2)= w(g_1\tau_{j,1})=w(g_1)w(\tau_{j,1})=w(g_1)\tau_{j,2},
\]
then $w(g_1)\mathcal{P}_2\cap w(g_2)\mathcal{P}_2$ contains edges of the same types as in   $g_1\mathcal{P}_1\cap g_2\mathcal{P}_1$.  

For each edge $\tilde{e}$ in $g_1\mathcal{P}_1\cap g_2\mathcal{P}_1$, construct a (simply connected)
tubular neighbourhood $U$ of the image $\pi_1(\tilde{e})$.
There are lifts $U_1$ and $U_2$ of $U$, respectively,  to tubular neighbourhoods of $\tilde{e}$ and its counterpart $\tilde{e}\,^\prime$ in  $w(g_1)\mathcal{P}_2\cap w(g_2)\mathcal{P}_2$. Using the ideas of Lemma \ref{lem-SClifts} and Lemma \ref{lem-doublelifts}, there is a specification of $\pi_2^{-1}\circ\pi_1: U_1 \rightarrow U_2$, which is a homeomorphism.  Because of the compatibility conditions, $\bigcup_{g \in H} \phi_{g,g^\prime}$  and 
$\pi_2^{-1}\circ\pi_1: U_1 \rightarrow U_2$ agree on their common domain $U_1-\tilde{e}$. By Lemma \ref{lem-extend2nipped} the map $\bigcup_{g \in H} \phi_{g,g^\prime}$ may be extended to include the interior of $\tilde{e}$. Do this for all edges in $g_1\mathcal{P}_1\cap g_2\mathcal{P}_1$.   
\end{proof}

\begin{remark}\label{rk-PIsoCayley} 
A partial isometry may be interpreted in terms of the embedded Cayley graphs. Let  $\mathrm{Cay}(H_1,E,\Sigma_1)$ 
be the subgraph of  $\mathrm{Cay}(G,E,\Sigma_1)$ whose vertex set is $H_1$ and contains all the edges of $\mathrm{Cay}(G,E,\Sigma_1)$
whose endpoints lie in $H_1$. Define a similar graph  $\mathrm{Cay}(H_2,E,\Sigma_2)$ for the second surface. 
Then the map in Proposition \ref{prop-ctsext} is a partial isometry if and only if the map
$\mathrm{Cay}(H_1,E,\Sigma_1) \rightarrow  \mathrm{Cay}(H_2,E,\Sigma_2)$ induced by $w:H_1\rightarrow H_2$ ($w:H\rightarrow H^\prime$)
is well defined and is a graph isomorphism, preserving edge types.
\end{remark}

\begin{remark}
If, as assumed in Proposition \ref{prop-ctsext}, 
\[
w : \{1\}\cup \{\tau_{j,1}: 1\le j\le 2k\}\rightarrow\{1\}\cup \{\tau_{j,2}: 1\le j\le 2k\}
\]
is bijective, then the edge collapse pattern and the multi-edge patterns must be the same. 
This is already a strong constraint on the crossover sequences, as discussed in Section \ref{subsec-modcompdiff}.    
\end{remark}

We finish the section with a corollary characterizing when modular companions are  conformally equivalent.

\begin{corollary}\label{cor-ctsext2}
Let all assumptions be as in Proposition \ref{prop-ctsext}. Then, a maximal partial isometry determined by $w: H\rightarrow H'$ 
is a complete isometry,  if and only if $H=G$ and  $w \in \mathrm{Aut}(G)$. 
In the case of a complete isometry the two modular companions are conformally equivalent by a conformal equivalence 
that  intertwines the group actions.  Furthermore, the Cayley graphs are isomorphic. The colouring of the edges can be implemented 
so that the colours are preserved by the isomorphism.
\end{corollary}

\begin{proof}
Most of the proof is straight forward except two small issues. If $H=G$ then $\phi^\star$ maps 
$S_1^\star$ isometrically to  $S_2^\star$, where $S_i^\star$ is the surface with all vertices of the tiling removed. 
By the Removable Singularity theorem, we can fill in all the punctures and still have a conformal map. 
The second issue is demonstrating that  $w \in \mathrm{Aut}(G)$. But, equation \eqref{eq-MCcompatible} holds
for all $g \in G$ and all crossover transformations $\tau_{i,1}$. 
Since the crossover transformations generate $G$, then $w$ must be a bijective homomorphism.    
\end{proof}

\subsection{Building partial isometries}\label{subsec-buildingpartialIso}

To make partial isometries meaningful and interesting we impose some requirements on $H$. 

\begin{enumerate}
  \item The set $H\mathcal{P}_1^\star$ is connected.
  \item The set $H$ is maximal, namely we cannot add an additional polygon on the boundary of $H\mathcal{P}_1^\star$ and still have a partial isometry. 
\end{enumerate}

Computational experiments using the Magma code in \cite{Br3} have shown that various maximal $H$ need not have the same number of elements, 
and so a maximal $H$ need not be unique. Of course, different selections of $\mathcal{E}$ lead to different 
shapes of polygons, different polygon boundary behaviour, and different results on maximality.

The rest of this section gives a detailed presentation of an algorithm for computing partial isometries 
satisfying the rules 1 and 2 above. 
Each invocation of the algorithm will start with a selection of $\mathcal{E}$, 
generating vectors $\mathcal{V}_1$ and $\mathcal{V}_2$, and an initial $H=\{1\}$. 
At each iteration of the algorithm a new $H$ is calculated, 
following the rules above for modifying $H$ or terminating the algorithm. 
After termination, the final $H$ and $|H|$ are reported. 
By allowing  $\mathcal{V}_1$ and $\mathcal{V}_2$ to vary, 
we can find maximal partial isometries between any two  modular companions.  
The algorithm even works for monodromy pairs $(T,\xi_1)$,  $(T,\xi_2)$ that are not modular companions. 
At the end of the section we present a toy example, computable by hand,  based on examples \ref{ex-S3-2233-1} and \ref{ex-S3-2233-2}.
This is followed by Section \ref{subsec-spectacular} giving a sampling of interesting, more complex examples.

The construction  starts with a seed polygon and then adds polygons one at a time satisfying 
the connectivity requirement and the compatibility equations  \eqref{eq-MCcompatible}.
We may assume that the seed polygon corresponds to the element $1 \in G$. To maintain connectivity and to extend the definition of $w$ we always add polygons that meet the boundary of the current $H\mathcal{P}_1$. 
The selection process for the new polygon can lead to different results for a maximal $H$. Here are two candidates for a selection process.
\begin{enumerate}
  \item \textbf{Cayley distance.} We have a nested structure $\{1\}=H_0 \subseteq H_1 \subseteq H_2 \subseteq \cdots = H$, 
  such that $H_n$ consists of all possible polygons that can be added and reached in $n$ steps or less. 
  The ``$n$ steps or less'' refers to paths in the dual Cayley graph starting at the vertex $1$. 
  The idea is that we first add all polygons $1$ step away, then $2$ steps away, ... , recognizing that some of the polygons cannot be added.
  \item \textbf{Random.} At each step, select a random polygon that meets the boundary of the current $H\mathcal{P}_1^\star$. 
  Repeated runs of this process will eventually sweep out all possible maximal $H$, though the number of different $H$ could be very large. 
\end{enumerate}

We are now going to try to extend $\phi_0$ by adding polygons one at a time to the seed polygon $\mathcal{P}_1$ and determining the values of $w(g)$ so that $\bigcup_g \phi_{g,g^\prime}$ remains continuous when we add the polygons to the domain. We will record our additions by constructing corresponding subgraphs 
$\mathrm{Cay}(H,E,\Sigma_1)$ and its isomorphic image in $\mathrm{Cay}(H^\prime,E,\Sigma_2)$.

We will need several sets and lists to keep track of our partial map domain $H\mathcal{P}_1^\star$  as we complete the steps of our algorithm.
\begin{itemize}
  \item $H$:  List of polygon labels for $S_1$ in the current domain. 
  \item $w = \left\{ (h,w(h))=(h,h^\prime) \,|\, h \in H\right\}$: Assignment of polygon labels in the partial map $\overline{\phi}:H\mathcal{P}_1^\star \rightarrow H^\prime\mathcal{P}_2^\star$. 
  \item $wH = \left\{ w(h)\,|\, h \in H\right\}=H^\prime$:  List of polygon labels for $S_2$ using the current domain.     
  \item $IntEdges_1$:  List of current interior edges in $H\mathcal{P}_1$. This is the edge set of the current  $\mathrm{Cay}(H,E,\Sigma_1)$.
  \item $IntEdges_2$:  List of current interior edges in $H\mathcal{P}_2$. This is the edge set of the current  $\mathrm{Cay}(H^\prime,E,\Sigma_2)$.
  \item $\partial Edges_1$: List of current boundary edges in  $H\mathcal{P}_1$.
  \item $BadEdges_1$:  List of boundary edges for which a new polygon could not be added.
  \item $op$: the induced map on the edges of  $\mathrm{Cay}(G,E,\Sigma_1)$: $(e, g, g\tau_e)\rightarrow (e^{op}, g\tau_e,g)$
\end{itemize}

\begin{remark}
  Though we use set notation, the above quantities are lists and are treated as such in the computer calculations. New items, such as the current boundary are always added at the end. Deletions are done in place and otherwise do not alter the order of the list. Thus the order in which items are added or processed is recorded.  

\end{remark}

\begin{algorithm}
After initialization, we iteratively repeat the remaining steps below until there are no good boundary edges left.

\begin{enumerate}
  \item Initialize: $H = \left\{ 1 \right\}$;  $w = \left\{(1,1)\right\}$; $IntEdges_1$, $IntEdges_2$, and $BadEdges_1$ are empty,
  $\partial Edges_1 =  \left\{ (e_1, 1, \tau_{1,1}),\ldots,(e_{2k}, 1, \tau_{2k,1})\right\}$.
  \item Select a boundary edge $(e, g, g\tau_e)$ in  $\partial Edges_1$.  For the Cayley distance method, select the first edge in $\partial Edges_1$.  For the  random method select a random edge in $\partial Edges_1$. 
  \item With the selected edge do the following:  
  \begin{enumerate}
    \item Set $h=g\tau_e$ and assume $e=e_j$ so that $\tau_e= \tau_{j,1}$. Compute the test value $w(h)=w(g)w(\tau_{i,1}) =w(g)\tau_{i,2}$. 
        
    \item Check to see if the test value $w(h)$ keeps $w$ injective,  i.e., $w(h) \notin wH$ should hold.
    \item Determine all the edges $(e^\prime,g,g\tau_{e^\prime})$ such that $g\tau_{e^\prime} = h$ and $g \in H$, 
          as these edges will become interior if we add $h$ to $H$.
    \item For each such edge, test to see if the continuity criterion holds \eqref{eq-MCcompatible},
          i.e.,  $w(h)=w(g\tau_{e^\prime})=w(g)w(\tau_{e^\prime})$, is satisfied.
    \item If all tests are passed, then do the following:  (1) add $h$ to $H$ and update $wH$, 
          (2) add the new interior edges to $IntEdges_1$ and $IntEdges_2$ at the end of the lists, 
          (3) remove the new interior edges from  $\partial Edges_1$, and 
          (4) append all the remaining edges of $\partial h\mathcal{P}_1$ to $\partial Edges_1$,  at the end of the list.
    \item If any of the tests fail then do this: (1) remove all the edges of 
          $\partial h\mathcal{P}_1$ from  $\partial Edges_1$, maintaining list order  and 
          (2)  append all the edges of $\partial h\mathcal{P}_1$ to  $BadEdges_1$, at the end of the list:  
  \end{enumerate}
  \item If $\partial Edges_1$ is non-empty the return to Step 2, otherwise halt.     
\end{enumerate}
\end{algorithm}

\begin{example}\label{ex-S3-2233-4}
The number of tiles in  a partial isometry is a measure of how closely two modular companions resemble each other geometrically, 
i.e., the size of a ``patch match''.  Since the tiles all have the same area the fraction $|H|/|G|$ 
is the proportion of the total area of the surface contained in a maximal patch. 
The size of $|H|/|G|$ will vary according to the cut system chosen, the selection process for adding tiles, 
and perhaps even the starting point for ordering the edges of a polygon.

The partial isometry information can be put into matrix-like table, where the entry
is the number of tiles in a maximal partial isometry from the surface  defined by the row label to the surface defined as the column label.
The row and column labels are the representative generating vectors defined in Example \ref{ex-S3-2233-2}.
Observe that the diagonal elements equal $|G|$ and that the off diagonal elements are strictly smaller. 

\[%
\label{table-4}
\begin{tabular}
[c]{c}%
\begin{tabular}
[c]{|l|l|l|}\hline
          & $\mathcal{V}_1$ & $\mathcal{V}_2$\\ \hline
$\mathcal{V}_1$ & $6$ & $2$ \\\hline
$\mathcal{V}_2$ & $1$ & $6$ \\\hline
\end{tabular}
\ \\
\\
Table 4. Partial isometry matrix for $G  = \Sigma_3$  and $\mathfrak{s}=(0;2,2,3,3)$
\end{tabular}
\
\] 

\end{example}

\subsection{Finding interesting examples}\label{subsec-spectacular}
The toy example we have been considering is a bit unsatisfying, so we sketch how we may construct more interesting 
examples. Given a group signature pair $(G,\mathfrak{s})$, there are two steps to analyzing 
the corresponding  modular companions $S$ lying over a typical quotient $T$. 
First, we identify all the surfaces lying over  $T$ by means of generating 
vector classes   $\mathcal{V}^{\mathrm{Aut}(G)}$ and then we partition  
the vector classes into subsets corresponding to 
modular companions. Each subset consists of those surfaces lying over $T$ that 
lie within a single stratum of the $(G,\mathfrak{s})$ action moduli space. The subsets also correspond
to modular orbits of $M_\mathfrak{s}$ acting upon the vector classes via equation \eqref{eq-GVmodaction}.
The second step selects a cutset $\mathcal{E}$, converts generating vectors to crossover sequences, 
determines any tiling deficiencies, and then computes a partial isometry matrix. 

The theory and Magma code for the first step were developed in \cite{BrCoIz1} and \cite{BrCoIz2} 
and the code and log files have been posted on \cite{Br3} (Sections 1 and 2). The code runs in reasonable time for groups 
with orders in the low hundreds. Several thousand cases were computed in \cite{BrCoIz2}, 
corresponding to groups of low order in the Magma small group data base and such that $T$ 
is a torus with a single cone point.  For the current paper the code from \cite{BrCoIz1} and \cite{BrCoIz2}
was modified and extended to include both Steps 1 and 2, and the updated code
and logfiles have been posted in Section 4 of \cite{Br3}.

We have described Step 2 in great detail in this paper, but, for completeness, 
we will say few words about Step 1 (full details are in  \cite{BrCoIz1}). 
First, we compute all generating vectors with the given signature. This is the 
computationally expensive step, though there are tricks to make the computation go faster. 
Next, we need a procedure to find unique representatives of the automorphism classes 
$\mathcal{V}^{\mathrm{Aut}(G)}$. Pick a desirable, arbitrary 
but fixed, ordering of the elements $G$, this allows us to lexicographically sort generating vectors. 
For any class  $\mathcal{V}^{\mathrm{Aut}(G)}$, let  $\overline{\mathcal{V}}$ 
denote the (unique) lexicographic minimum of all the vectors in $\mathcal{V}^{\mathrm{Aut}(G)}$. 
We then have a sequence $\overline{\mathcal{V}_1},\ldots,\overline{\mathcal{V}_N}$ of representatives 
of the automorphism classes, where $N$ is the number of surfaces lying over a generic $T$.

In the planar case, the modular group $M_\mathfrak{s}$ has the following structure. 
There is a normal subgroup ${PM}_\mathfrak{s}$, corresponding to the pure braid group, 
derived from homeomorphisms of $T$ that do not move cone points.
The full group $M_\mathfrak{s}$ is generated by adding to ${PM}_\mathfrak{s}$  those $\Phi_{j,j+1}$ 
in Example \ref{ex-S3-2233-2} that preserve the signature. The quotient $M_\mathfrak{s}/{PM}_\mathfrak{s}$
is the permutation group of the cone points that preserve cone order. 
Now, a set of generators for ${PM}_\mathfrak{s}$ is well known, so we may easily construct a finite set of generators
for $M_\mathfrak{s}$ by adding the appropriate automorphisms $\Phi_{j,j+1}$.  
For the prior statements on structure to hold, the signature is assumed be in non-decreasing order. 

For any modular element $\mu \in M_\mathfrak{s}$, a permutation $q_\mu$  in the symmetric group 
$\Sigma_N$ is determined by 
\begin{equation}\label{eq-mod2perm}
  q_\mu: \overline{\mathcal{V}_i}\rightarrow  \overline{\mathcal{V}_{i^\prime}} = \overline{\mu\cdot \mathcal{V}_i},
\end{equation}
where  $\mu\cdot \mathcal{V}_i$ is as defined in \eqref{eq-GVmodaction}. 
Following Example \ref{ex-S3-2233-2}, we construct a finite permutation group
$Q_\mathfrak{s}=\langle q_\mu\rangle$ where $\mu$ runs over a known finite 
set of generators of $M_\mathfrak{s}$, described above. We now ask Magma to compute 
the orbits of $Q_\mathfrak{s}$ acting on 
$\overline{\mathcal{V}_1},\ldots,\overline{\mathcal{V}_N}$.
The orbits correspond to strata and the vectors within an orbit correspond 
to modular companions within a stratum.  Typically, $Q_\mathfrak{s}$ is rather large.

\[\label{table-7}
\begin{tabular}[c]{c}
\begin{tabular}
[c]{|c|c|c|c|c|c|}
\hline
$G$       & $|G|$ & $\mathfrak{s}$ & $\sigma$ & \#surfs & \#ModComp  \\
\hline
$Sym(3)$  & $6$   & $(2,2,3,3)$                & $2$              &  $2$    &  $(2)$           \\ 
\hline
$Cyclic(13)$  & $13$  & $(13,13,13,13)$        & $12$              &  $133$    & $(3, 4, 6, 12, 12, 12, 12, 12, 12, 24, 24 )$           \\ 
\hline
$SG(21,1)$  & $21$  & $(3,3,7,7)$        & $12$              &  $12$    & $(6, 6)$           \\ 
\hline
$Alt(5)$  & $60$  & $(2,2,2,3)$                 & $6$              &  $9$    & $(9)$           \\ 
\hline
$Alt(5)$  & $60$  & $(2,3,3,5)$                & $20$              &  $20$    & $(20)$          \\ 
\hline
$Alt(5)$  & $60$  & $(5,5,5,5)$                & $37$              &  $47$    & $(6,10,15,16)$            \\ 
\hline
$PSL(2,7)$  & $168$  & $(2,2,3,3)$             & $29$              & $15$    &  $(15)$           \\ 
\hline
$PSL(2,7)$  & $168$  & $(7,7,7,7)$             & $121$              & $95$    &  $(6,7,16,24,42)$            \\ 
\hline
$PSL(2,11)$  & $660$  & $(5,5,5,5)$            & $397$              & $4906$    & not done            \\ 
\hline
\end{tabular}
\ \\
\\
Table 5. Strata and modular companion information for selected pairs $(G,\mathfrak{s})$ 
\end{tabular}
\]

We finish by citing two families of $(G,\mathfrak{s})$ actions from previous work (\cite{BrCoIz1},\cite{BrCoIz2};
discussing a list of examples in Table 5, computed specifically for this paper; and then present a partial isometry matrix 
in Table 6.  In Table 5, above, we present information for the covering map 
$p: \mathcal{S}_{G,\mathfrak{s}}\rightarrow  \mathcal{M}_\mathfrak{s}$ \eqref{eq-pBM}.   
The first two columns are group information, the 3rd column is the signature, the 4th column is the genus of $S$, 
and the 5th column is total number of surfaces lying over a generic $T$. 
The 6th column is a list of numbers of modular companions when restricted to a component of the cover, 
see equations \eqref{eq-modcomp-comp} and \eqref{eq-modcomp-map}. The numbers are also the orbit sizes for the $Q_\mathfrak{s}$ action.

\paragraph{\emph{Prime order actions.}} The strata corresponding to actions of prime cyclic groups 
of order $p$ are ubiquitous in moduli space.  In our case, the quotient surface is a sphere 
with four branch points of order $p$ and the surfaces lying over have genus $p-1$. 
In \cite{BrCoIz1} we studied the growth of the number of strata for primes $\le 101$. 
A chart showing the growth of the number of strata as $p$ increases is given in the paper
and is posted on the site \cite{Br3}. We have included one line of the chart in Table 5, for the prime $13$, 
and already the number of strata is large.  

\paragraph{\emph{Non-abelian groups of order $pq$.}} A second interesting family consists of the non-abelian groups of order $pq$ 
where $p,q$ are odd primes with $p\vert(q-1)$. We studied this family in \cite{BrCoIz2}
because the family produced surface actions for which the quotient is a torus with one cone point.
The family was amenable to ``by hand analysis" in \cite{BrCoIz2}. The signature $(0;p,p,q,q)$ has numerous 
generating vectors yielding surfaces of genus $(p-1)(q-1)$. However we do not attempt to do 
a similar  ``by hand analysis" for these signatures in this paper. The smallest example is $SG(21,1)$ 
the first small group of order 21 in the small group database. The information for
$SG(21,1)$ is given in the Table 5. 

\paragraph{\emph{Table 5 special cases.}}The last six lines of Table 5 explore small simple group actions with signatures that have symmetries.
The table also shows the explosive growth of the number of modular companions as the group size increases.
Detailed, auto-generated  log files, showing all the steps and results of the calculations are posted on \cite{Br3}. 

\paragraph{\emph{Partial isometry matrix.}} Finally, we present in Table 6, the partial isometry  matrix for $G  = \mathrm{Alt}(5)$  and $\mathfrak{s}=(0;2,2,2,3)$. 
We used the cut system $\mathcal{E}_4$ and the Cayley distance criterion for the addition of new tiles. 
All the off diagonal entries are quite a bit less that $|G|$, showing that the off diagonal pairs are not conformally equivalent.
The diagonal entries must be  $|G|$ according to Corollary \ref{cor-ctsext2}. 
We have not analyzed any patterns in the partial isometry matrix.

\[%
\label{table-8}
\begin{tabular}
[c]{c}%
\begin{tabular}
[c]{|l|l|l|l|l|l|l|l|l|l|}\hline
          & $\mathcal{V}_1$ & $\mathcal{V}_2$ & $\mathcal{V}_3$ & $\mathcal{V}_4$ & $\mathcal{V}_5$ 
          & $\mathcal{V}_6$ & $\mathcal{V}_7$ & $\mathcal{V}_8$ & $\mathcal{V}_9$\\ \hline
$\mathcal{V}_1$ & $60$ & $36$ & $36$ & $35$ & $35$ & $35$ & $36$ & $36$ & $36$\\\hline
$\mathcal{V}_2$ & $37$ & $60$ & $37$ & $39$ & $42$ & $37$ & $37$ & $42$ & $39$\\\hline
$\mathcal{V}_3$ & $35$ & $35$ & $60$ & $44$ & $44$ & $35$ & $35$ & $35$ & $35$\\\hline
$\mathcal{V}_4$ & $36$ & $36$ & $42$ & $60$ & $42$ & $36$ & $36$ & $36$ & $36$\\\hline
$\mathcal{V}_5$ & $33$ & $44$ & $44$ & $44$ & $60$ & $33$ & $33$ & $44$ & $33$\\\hline
$\mathcal{V}_6$ & $37$ & $34$ & $37$ & $37$ & $34$ & $60$ & $34$ & $34$ & $37$\\\hline
$\mathcal{V}_7$ & $38$ & $38$ & $36$ & $36$ & $36$ & $36$ & $60$ & $38$ & $38$\\\hline
$\mathcal{V}_8$ & $38$ & $44$ & $38$ & $38$ & $44$ & $38$ & $38$ & $60$ & $38$\\\hline
$\mathcal{V}_9$ & $35$ & $39$ & $35$ & $39$ & $35$ & $35$ & $35$ & $35$ & $60$\\\hline
\end{tabular}
\ \\
\\
Table 6. Partial isometry matrix for $G  = \mathrm{Alt}(5)$  and $\mathfrak{s}=(0;2,2,2,3)$
\end{tabular}
\
\]

\section{Basic Lemmas}\label{sec-BasicLemmas}
In the preceding sections we have used some basic facts, collected in this section as lemmas.
The lemmas are very basic and so we leave the proofs to the reader.  

\begin{lemma}\label{lem-SClifts}
Let $p:X \rightarrow B$ be an unbranched covering space and let $A \subset B$ be a locally closed, path connected, and simply connected subset of  $B$. Then $p^{-1}(A)$ is a locally closed subset of $X$ such that each path component of  $p^{-1}(A)$ is mapped by $p$ homeomorphically onto $A$. If $A$ is an open subset and $p$ is smooth or holomorphic then each path component of $p^{-1}(A)$ is mapped diffeomorphically or biholomorphically, respectively, onto $A$.      
\end{lemma}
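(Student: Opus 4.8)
The plan is to reduce the statement to the classical fact that a covering space of a path-connected, locally path-connected, simply connected base is trivial, i.e.\ restricts to a homeomorphism on each path component. First, the locally closed claim is immediate: writing $A=U\cap C$ with $U$ open and $C$ closed in $B$, we get $p^{-1}(A)=p^{-1}(U)\cap p^{-1}(C)$, an intersection of an open and a closed subset of $X$ since $p$ is continuous. The key preliminary step is to check that the restriction $q:=p|_{p^{-1}(A)}\colon p^{-1}(A)\to A$ is again a covering map: for $a\in A$ pick an evenly covered neighbourhood $V\ni a$ in $B$ with $p^{-1}(V)=\bigsqcup_i V_i$ and $p|_{V_i}\colon V_i\to V$ a homeomorphism; then $V\cap A$ is an open neighbourhood of $a$ in $A$ and $q^{-1}(V\cap A)=\bigsqcup_i\bigl(V_i\cap p^{-1}(A)\bigr)$, with each summand carried homeomorphically onto $V\cap A$ by the restriction of $p|_{V_i}$.

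Next I would fix a path component $\widetilde A$ of $p^{-1}(A)$, a base point $\tilde x_0\in\widetilde A$, and set $x_0=p(\tilde x_0)$. Surjectivity of $q|_{\widetilde A}\colon\widetilde A\to A$ follows from path lifting for the covering $q$: given $x\in A$, choose a path in $A$ from $x_0$ to $x$ (possible since $A$ is path connected), lift it starting at $\tilde x_0$; the lifted path stays in $\widetilde A$ because it is a path, and its endpoint maps to $x$. Injectivity uses simple connectedness: if $\tilde x_1,\tilde x_2\in\widetilde A$ satisfy $q(\tilde x_1)=q(\tilde x_2)$, join them by a path $\tilde\delta$ in $\widetilde A$; then $q\circ\tilde\delta$ is a loop in $A$, null-homotopic there, so by the homotopy lifting property the lift $\tilde\delta$ starting at $\tilde x_1$ is itself a loop, whence $\tilde x_2=\tilde x_1$. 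Thus $q|_{\widetilde A}$ is a continuous bijection onto $A$.

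To promote this to a homeomorphism, I would use that $q$ is a local homeomorphism: once $\widetilde A$ is known to be open in $p^{-1}(A)$, the restriction $q|_{\widetilde A}$ is a bijective local homeomorphism onto $A$, hence open, hence a homeomorphism. When $A$ is open in $B$ this is transparent, and also gives the final assertion: $p^{-1}(A)$ is then open in $X$, each path component is open, $q$ is an honest covering of the open set $A$, and the local inverses of $p$ are smooth, resp.\ holomorphic, whenever $p$ is, so that $q|_{\widetilde A}$ is a bijective local diffeomorphism, resp.\ local biholomorphism, and therefore a diffeomorphism, resp.\ biholomorphism. I expect the only genuinely delicate point to be the openness of $\widetilde A$ inside $p^{-1}(A)$ in the non-open case: this requires $A$ (equivalently $p^{-1}(A)$) to be locally path connected, a hypothesis not written out explicitly but harmless in context, since in every application of this lemma $A$ is an open subset of a surface or a piecewise-smooth subregion of one, and one may reduce to the open case without loss of generality.
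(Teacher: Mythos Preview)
Your argument is correct and complete. Note, however, that the paper does not actually supply a proof of this lemma: Section~\ref{sec-BasicLemmas} says explicitly that ``the lemmas are very basic and so we leave the proofs to the reader.'' So there is nothing to compare against; your write-up \emph{is} the missing proof.

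One remark worth keeping: you correctly flag that openness of the path component $\widetilde A$ inside $p^{-1}(A)$ requires local path connectedness of $A$, a hypothesis the lemma as stated omits. Your handling of this is appropriate --- in every use the paper makes of the lemma (lifting tubular neighbourhoods and open polygons in Proposition~\ref{prop-ctsext}), $A$ is an open subset of a surface, so the issue does not arise. If you wanted to be fully rigorous in the general locally closed case, you could bypass the openness question entirely by showing directly that the inverse $A\to\widetilde A$ is continuous: given $a\in A$ with preimage $\tilde a\in\widetilde A$, any evenly covered neighbourhood of $a$ in $A$ has a unique sheet containing $\tilde a$, and the local section into that sheet lands in $\widetilde A$ by the same path-lifting argument you used for surjectivity. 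But this is a refinement, not a repair.
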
 

\begin{lemma}\label{lem-doublelifts}
For $i=1,2$ let $p_i:X_i \rightarrow B$ be unbranched covering spaces, let $b_0 \in B$, 
and  select  $x_j \in X_j$ that lie over $b_0$. Then, there is a small neighbourhood 
$U_0$ of $b_0 \in B$, and neigbourhoods $U_j \subseteq p_j^{-1}(U_0)$ of  $x_j \in X_j$ 
such that $p_j:U_j \rightarrow U_0$ is a homeomorphism. 
The restricted map $\psi_{x_1,x_2}=p_2\circ p_1^{-1}:U_1\rightarrow U_2$ is also a homeomorphism. 
If the covering space maps are smooth or holomorphic then so is the restricted map is a 
diffeomorphism or biholomorphism, respectively.
\end{lemma}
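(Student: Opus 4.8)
The plan is to reduce everything to the local triviality of covering spaces, applied simultaneously to $p_1$ and $p_2$. First I would choose $U_0$ to be a neighbourhood of $b_0$ that is at once evenly covered by $p_1$ and by $p_2$: the covering space axioms furnish an evenly covered open neighbourhood $V_i$ of $b_0$ for each $p_i$, and one takes $U_0$ to be any path connected open subset of $V_1\cap V_2$ containing $b_0$. In the smooth or holomorphic setting I would simply take $U_0$ to be a coordinate ball about $b_0$, which is in particular path connected and simply connected, so that Lemma \ref{lem-SClifts} applies to $U_0$ directly.

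Next I would let $U_j$ be the path component of $p_j^{-1}(U_0)$ containing $x_j$. By Lemma \ref{lem-SClifts} (equivalently, by even covering) the restriction $p_j\colon U_j\rightarrow U_0$ is a homeomorphism; in the smooth (resp.\ holomorphic) case it is a diffeomorphism (resp.\ biholomorphism), since a smooth (resp.\ holomorphic) covering map is a local diffeomorphism (resp.\ local biholomorphism), and a bijective local diffeomorphism (resp.\ local biholomorphism) is automatically open with inverse of the same class. Hence $p_j^{-1}\colon U_0\rightarrow U_j$ is well defined and of the same regularity as $p_j$.

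Finally, $\psi_{x_1,x_2}=p_2\circ p_1^{-1}\colon U_1\rightarrow U_2$ is a composition of homeomorphisms, hence a homeomorphism, and if the $p_i$ are smooth (resp.\ holomorphic) it is a composition of diffeomorphisms (resp.\ biholomorphisms), hence a diffeomorphism (resp.\ biholomorphism). By construction $\psi_{x_1,x_2}(x_1)=p_2(p_1^{-1}(b_0))=x_2$, so the chosen point of $X_1$ over $b_0$ is carried to the chosen point of $X_2$ over $b_0$, as required.

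There is essentially no genuine obstacle; the only points meriting care are to take $U_0$ path connected (and, in the differentiable or holomorphic settings, simply connected) so that the ``sheet of $p_j^{-1}(U_0)$ through $x_j$'' is unambiguous and Lemma \ref{lem-SClifts} applies verbatim, and to record that the regularity of $p_j^{-1}$, and therefore of $\psi_{x_1,x_2}$, follows from the elementary fact that an injective local diffeomorphism (resp.\ local biholomorphism) is an open map whose inverse is again smooth (resp.\ holomorphic).
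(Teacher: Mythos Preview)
Your argument is correct and is exactly the standard one; the paper itself leaves the proof of this lemma to the reader, so there is nothing to compare against beyond noting that your use of Lemma~\ref{lem-SClifts} to pick out the sheet through $x_j$ is precisely what the authors intend.

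One cosmetic point: the formula $\psi_{x_1,x_2}=p_2\circ p_1^{-1}$ in the statement (and hence in your write-up) is a typo for $p_2^{-1}\circ p_1$, since the map is supposed to go $U_1\to U_0\to U_2$; your verification line $\psi_{x_1,x_2}(x_1)=p_2(p_1^{-1}(b_0))$ inherits this confusion and does not type-check as written. Replacing it by $\psi_{x_1,x_2}(x_1)=p_2^{-1}(p_1(x_1))=p_2^{-1}(b_0)=x_2$ fixes it, and nothing in the argument changes.
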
 

The next lemma is restricted to sequential topological spaces so that sequences may be used in its proof.

\begin{lemma}\label{lem-extension}
 Suppose that $X$ and $Y$ are sequential topological spaces, $U\subset X$ and let $f:U\rightarrow Y$ 
be any continuous map.  Let $W$ be a subset satisfying $U\subseteq W \subseteq \overline{U}$, 
and such that for every $x \in W$ there are a neighbourhood $V_x$ of  $x$ in $X$ 
and a continuous map  $f_x:V_x \rightarrow Y$ such that $f=f_x$ when restricted to $U\cap V_x$. 
Then $f$ has a unique continuous extension $f:W\rightarrow Y$.    
\end{lemma}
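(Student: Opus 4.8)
The plan is to build the extension pointwise. For each $x\in W$ fix an admissible pair $(V_x,f_x)$ as in the hypothesis and set $F(x):=f_x(x)$; one then shows that $F$ is well defined, that it extends $f$, that it is continuous, and that it is the only such extension. Since any continuous extension of $f$ must send $x$ to the (forced) limiting value of $f$ along $U$ near $x$, uniqueness and well-definedness are essentially the same statement, so I would treat them together.

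First I would record the basic observation that $x\in W\subseteq\overline U$ forces $x\in\overline{U\cap V_x}$ for any neighbourhood $V_x$ of $x$: intersecting an arbitrary neighbourhood of $x$ with $V_x$ again gives a neighbourhood of $x$, which meets $U$. In particular, for $x\in U$ this gives $F(x)=f_x(x)=f(x)$, so $F$ extends $f$ as soon as it is well defined.

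The hard part is well-definedness: if $(V_x,f_x)$ and $(V_x',f_x')$ are two admissible pairs at $x$, I must show $f_x(x)=f_x'(x)$. On the neighbourhood $N=V_x\cap V_x'$ the continuous maps $f_x,f_x'$ agree on $U\cap N$, and $x\in\overline{U\cap N}$ by the observation above. This is where the sequential hypothesis enters: taking $u_n\in U\cap N$ with $u_n\to x$, we have $f_x(u_n)=f(u_n)=f_x'(u_n)$ for all $n$, so by continuity $f_x(x)=\lim f_x(u_n)=\lim f_x'(u_n)=f_x'(x)$, using that a convergent sequence in $Y$ has a unique limit (so, as always in our setting, $Y$ is assumed Hausdorff). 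The identical argument, applied to two putative continuous extensions $g,h:W\to Y$, gives $g(x)=\lim f(u_n)=h(x)$, hence uniqueness. I expect this to be the main obstacle, precisely because it is the only place the sequential (and implicitly Hausdorff) hypotheses are needed; the rest is soft.

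Finally, for continuity of $F$ I would argue locally. Given $x\in W$, put $O_x=\mathrm{int}(V_x)$, an open neighbourhood of $x$. For any $y\in W\cap O_x$ the pair $(O_x,f_x|_{O_x})$ is admissible at $y$, so well-definedness gives $F(y)=f_x(y)$; hence $F$ agrees with the continuous map $f_x|_{O_x}$ on the $W$-open set $W\cap O_x$. As these sets cover $W$, the pasting lemma yields that $F$ is continuous on all of $W$. (Alternatively, since $W$ carries a sequential topology one may check sequential continuity directly: if $x_n\to x$ in $W$ then eventually $x_n\in O_x$, whence $F(x_n)=f_x(x_n)\to f_x(x)=F(x)$.) This completes the plan; together with the uniqueness established above, $F$ is the unique continuous extension.
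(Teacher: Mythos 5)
The paper does not actually give a proof of Lemma \ref{lem-extension}: Section \ref{sec-BasicLemmas} explicitly leaves it to the reader, so there is no argument to compare with line by line. Your plan --- define $F(x)=f_x(x)$, prove well-definedness, obtain continuity by pasting on the open sets $W\cap\mathrm{int}(V_x)$, and get uniqueness from the density of $U$ in $W$ --- is exactly the sequence-flavoured argument the paper's preamble (``restricted to sequential topological spaces so that sequences may be used'') suggests, and you are right to flag that uniqueness silently needs $Y$ to have unique sequential limits (Hausdorff suffices); without some such hypothesis the statement is false (take $Y$ the line with two origins, $U=\mathbb{R}\setminus\{0\}$, $W=X=\mathbb{R}$).

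The one step that does not follow from the stated hypotheses is the sequence extraction: from $x\in\overline{U\cap N}$ you ``take $u_n\in U\cap N$ with $u_n\to x$''. In a sequential space the closure of a set need not equal its sequential closure; that equality is the strictly stronger Fr\'echet--Urysohn property (the Arens space is the standard sequential space in which it fails). So, as written, your well-definedness and uniqueness arguments are justified only when $X$ is Fr\'echet--Urysohn (e.g.\ first countable, which covers every application in this paper), not for arbitrary sequential $X$. The repair is easy and in fact dispenses with sequentiality altogether: since $Y$ is Hausdorff, the agreement set $\{z\in N:\ f_x(z)=f'_x(z)\}$ is closed in $N$ and contains $U\cap N$, hence contains $x\in\overline{U\cap N}$, giving well-definedness; likewise the equalizer of two continuous extensions $g,h:W\to Y$ is closed in $W$ and contains $U$, which is dense in $W\subseteq\overline{U}$, giving uniqueness. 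With that substitution, and keeping your pasting argument for continuity (which is correct and needs no sequences), the proof is complete. One further small caution: your parenthetical alternative ``since $W$ carries a sequential topology'' is not automatic, as subspaces of sequential spaces need not be sequential; the pasting-lemma route avoids this issue entirely.
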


\end{document}